\newtheorem{thm}{Theorem}[section]
\newtheorem{pro}[thm]{Proposition}
\newtheorem{lem}[thm]{Lemma}
\newtheorem{cor}[thm]{Corollary}
\newtheorem{ex}[thm]{Example}
\newtheorem{que}[thm]{Question}
\newtheorem{df}[thm]{Definition}
\newtheorem{rem}[thm]{Remark}
\newcommand{\Int}{\mbox{{\rm int}}\,}
\newcommand{\cl}{\mbox{{\rm cl}}}
\newcommand{\st}{\mbox{{\rm st}}}
\newcommand{\pr}{\mbox{{\rm pr}}}
\newcommand{\id}{\mbox{{\rm id}}}
\newcommand{\St}{\mbox{{\rm St}}}
\newcommand{\aut}{\mbox{{\rm Aut}}}
\newcommand{\CL}{\mbox{{\rm CL}}}
\begin{document}

\title{On two methods of constructing compactifications of topological groups}\thanks{The study was carried out with the help of the Center of Integration in Science, Ministry of Aliyah and Integration, Israel,  8123461, and is supported by ISF grant 3187/24.}

\author{K.\,L.~Kozlov}
\address{Department of Mathematics, Ben-Gurion University of the Negev, Beer Sheva, Israel}
\email{kozlovk@bgu.ac.il}
\author{A.\,G.~Leiderman}
\address{Department of Mathematics, Ben-Gurion University of the Negev, Beer Sheva, Israel}
\email{arkady@math.bgu.ac.il}

\date{}

 \maketitle
 
\begin{abstract}

A classification of proper compactifications of topological groups is presented from the point of view of the possibility of continuous extensions of group operations. Ellis' method allows one to obtain all right topological semigroup compactifications of a group on which its left action on itself (by multiplication on the left) continuously extends from its equivariant compactifications. The presentation of group elements as graphs of maps in the hyperspace with the Vietoris topology allows one to obtain compactifications on which the involution and the left action of a group on itself extend from its equivariant compactifications.

 \medskip
 
Keywords: topological group, compactification, semigroup, involution, enveloping Ellis semigroup, binary relation, hyperspace

\medskip

AMS classification: Primary 22F50, 54D35 Secondary 54H15, 54E15
\end{abstract}

\section{Introduction} 

On a topological group $G$ there are algebraic structures: 

\medskip

(continuous) binary operation --- {\it multiplication} $\cdot: G\times G\to G$,

\medskip

(continuous) unary operation --- {\it involution} (a se{\rm l}f-inverse map $*:G\to G$ such that $*(g)=g^{-1}$ and $*(gh)=*(h)*(g)$, element $g^{-1}$ is an {\it inverse} of $g$), 

\medskip

multiplication on the left specifies (continuous) {\it left action} 
$$\alpha: G\times G\to G,\ \alpha (g , h)=gh.$$ 

multiplication on the left and involution specifies (continuous) {\it "right"\ action} 
$$\alpha^*: G\times G\to G,\ \alpha^* (g , h)=hg^{-1}.$$ 

It was A.~Weil who showed that a topological group $G$ is (locally) precompact in the right uniformity iff some (locally) compact group $\bar G$ contains $G$ as a dense topological subgroup ($\bar G$ is the Weil completion of $G$). Having a system of (continuous) homomorphisms to compact groups separating the elements of $G$, Bohr (non-proper) compactification of a topological group can be obtained (topological embedding is replaced with injectivity). I.\,M.~Gel'fand and D.~Ra\v{\i}kov showed that every locally compact Abelian group has Bohr compactification in this sense. Leaving only continuity from the topological embedding, a "general" Bohr compactification of a topological group is similarly defined. The Bohr compactification is intimately connected to the finite-dimensional unitary representation theory of topological groups.

WAP-compactification of a group $G$ corresponds (using Gelfand--Kolmogoroff theory, which establishes an order-preserving bijective correspondence between Banach unital subalgebras of $C^*(G)$ and compactifications of $G$) to the algebra of weakly almost periodic functions. It is the maximal semitopological semigroup (non-proper) compactification of $G$. WAP-compactifications are important in understanding the structure and representations of topological groups.  M.~Megrelishvili~\cite{Megr2001} showed that the WAP-compactification may not be a proper compactification of $G$.

In topological dynamics, where the action of a topological group $G$ on a compact space $X$ is examined, the concept of an enveloping semigroup is a crucial tool for studying orbit closures and, consequently, minimal flows. The enveloping semigroup of a dynamical system was introduced by R. Ellis~\cite{Ellis}. An enveloping semigroup of $G$ (see, for instance, \cite{Vries}) may be examined as a non-proper semigroup compactification $(S, \varphi)$ of $G$:  
\begin{itemize}
\item[(a)] $S$ is a right topological semigroup, 
\item [(b)] $\varphi: G\to S$ is a continuous homomorphism of semigroups and $\varphi (G)$ is dense in $S$, 
\item[(c)] the extension of left action of $G$ on itself to $S$ is continuous. 
\end{itemize}
In topological algebra, this approach led to the concept of a semigroup compactification of a semigroup $S$, which is a topological space. $(T, \varphi)$ is a semigroup compactification of $S$ if $T$ is a compact right topological semigroup, $\varphi: S\to T$ is a continuous homomorphism, $\varphi (S)$ is dense in $T$ and the extension of action of $S$ to $T$ is continuous on the left (see, for instance, \cite[Definition~4.7]{HS}).

The well-known examples of proper compactifications of a topological group $G$ are the maximal $G$-compactification $\beta_G G$ (the greatest ambit) and Roelcke compactification $b_r G$. They are Samuel compactifications of $G$ in the right uniformity and Roelcke uniformity on $G$, respectively.

\medskip

Considering a topological group $G$ as the transformation group of a compact space $X$ (action is effective), the elements of $G$ can be examined as self-maps of $X$. Two methods of obtaining compactifications of $G$ are possible. If the topology on $G$ coincides with the topology of pointwise convergence for the action 
$G\curvearrowright X$, then the Ellis' enveloping semigroup (the closure of $G$ in the compact space $X^X$) is the proper compactification of $G$. This method allows one to obtain a proper right topological semigroup compactification of $G$ on which the left action of $G$ on itself extends continuously. 

If the topology on $G$ coincides with the compact-open topology for the action $G\curvearrowright X$, then another method (the method of graphs) works. Represent elements of $G$ as graphs of maps in the hyperspace $2^{X\times X}$ with Vietoris topology and take the closure of $G$ in the compact space $2^{X\times X}$. The method of graphs allows one to obtain compactifications on which the involution,  the left and "right" action of $G$ on itself continuously extend~\cite{Kennedy} and~\cite{usp2001}.

\medskip

In the paper, by a proper compactification of a topological group $G$ we understand a pair $(b G, b)$, where $b G$ is a compact space and $b: G\to b G$ is a topological embedding. We show what proper compactifications of topological groups are obtained from equivariant compactifications of $G$ using the above two methods. This yields a possible classification of proper compactifications of topological groups from the point of view of the possibility of extensions of the noted above algebraic operations. A similar approach to (partial) classifications of non-proper compactifications of topological (semi-)groups can be found in~\cite[Ch.~3, \S\ 3.3]{BergJungMiln76}, \cite[Ch. 21]{HS} and~\cite{BITsankov}.

\medskip 

The possibility of applying the Ellis method and the method of graphs to obtain proper compactifications is justified in \S\ \ref{topolco}, where the coincidence of the topology of pointwise convergence and the compact-open topology for the extended action of a topological group $G$ on its $G$-compactification is established. 

In Section~\ref{class} (possible) classification of compactifications of topological groups is presented. Part I  (of the classification):
\begin{itemize}
\item compactifications to which the left action of $G$ on itself continuously extends ($G$-compactification),
\item compactifications to which the left and "right" actions of $G$ on itself continuously extend  ($G^{\leftrightarrow}$-compactification),
\item compactifications to which involution and the left action of $G$ on itself continuously extend ($G^*$-compactification).
\end{itemize}
Examining all $G$-compactifications of a topological group $G$, the application of the method of graphs provides proper compactifications on which the involution, the left and "right" actions of $G$ on itself continuously extend. In Theorem~\ref{thm2} the properties of the map of posets of compactifications corresponding to the method of graphs are described. It is a morphism of posets, all $G$-compactifications greater than or equal to the Roelcke compactification are sent to the Roelcke compactification. The image of $G^{\leftrightarrow}$-compactification is greater than or equal to itself. The characterization of $G^*$-compactifications of $G$ which are fixed points under the map of posets is given. The Roelcke compactification is the maximal element in the posets of $G^{\leftrightarrow}$- and $G^*$-compactifications of a topological group $G$. 

Part II (of the classification):
\begin{itemize}
\item $G$-compactifications which are right topological monoids (Ellis compactification),
\item $G$-compactifications which are semitopological monoids (sm-compactification).
\end{itemize}
Examining all $G$-compactifications of a topological group $G$, the application of the Ellis method provides Ellis compactifications. 
In Theorem~\ref{thm1}, the properties of the map of posets of compactifications corresponding to the Ellis method are described. It is an epimorphism of the poset of $G$-compactifications onto the poset of Ellis compactifications of $G$, the image of a $G$-compactification is greater than or equal to itself, and this epimorphism is idempotent. The maximal $G$-compactification $\beta_G G$ is the maximal element in the poset of Ellis compactifications of $G$. The WAP-compactification of $G$ is the maximal element in the poset of its sm-compactifications if it is not empty and does not exceed the Roelcke compactification $b_r G$. 

Part III of the classification specifies when compactifications are close to being a group. Compactifications that are compact semitopological inverse monoids with continuous inverses (sim-compactification). There exists a maximal element in the poset of sim-compactifciations of $G$  if it is not empty. In this case, it doesn't exceed the WAP-compactification of $G$ (it is also a proper compactification). 

\medskip

We adopt terminology and notation from~\cite{Engelking} and~\cite{RD}. Useful information about algebraic structures on topological spaces is in~\cite{ArhTk} and~\cite{HS}. Self-contained information on semigroups can be found in~\cite{CliffPrest} (see also \cite{Lawson}) and about semigroup of binary relations in~\cite{Birkhoff}. About Vietoris topology on a hyperspace, see~\cite{Beer}. 

All spaces considered are Tychonoff, and if necessary, we denote a (topological) space as $(X, \tau)$ where $\tau$ is a topology on (a set) $X$. $\Int A$ and $\cl\ A$ are internal and closure of a subset $A$ of space $X$ respectively. An abbreviation ``nbd''  refers to an open neighborhood of a point ($O_x$ is a nbd of a point $x$). The family of all nbds of the unit $e$ in $G$ is denoted $N_{G}(e)$. Maps are continuous maps. A proper compactification of $X$ is denoted $(bX, b)$ where $b X$ is compact, $b: X\to b X$ is a dense embedding. The map $f: b' X\to b X$ is the map of compactifications $(b' X, b')$, $(b X, b)$ if $b=f\circ b'$ and $b' X\geq bX$ in this case. $\id$ is the identity map.

Uniform space is denoted as $(X, \mathcal U)$, $u$ is a uniform cover and ${\rm U}$ is a corresponding entourage of $\mathcal U$~\cite{Isbell}, \cite{Engelking}. $\st (A, u)$ is the star of the set $A\subset X$ with respect to $u\in\mathcal U$. If the cover $v$ is a refinement of  $u$, we use the notation $v\succ u$. Uniformity on a topological space is compatible with its topology. $\mathcal U\vee\mathcal V$ denotes the least upper bound of uniformities $\mathcal U$ and $\mathcal V$. $(\tilde X, \tilde{\mathcal U})$ denotes the completion of $(X, \mathcal U)$. If $X$ is a subset of a uniform space $(Y, \tilde{\mathcal U})$, then $\mathcal U=\{\tilde u\wedge X=\{\tilde U\cap X\ |\ \tilde U\in\tilde u\}\ |\ \tilde u\in\tilde{\mathcal U}\}$, is a base of the {\it subspace uniformity} on $X$.

\begin{lem}\label{cont}
The map $f: (X, \mathcal U)\to (Y, \mathcal V)$ of uniform spaces is continuous {\rm(}in the induced topologies on $X$ and $Y$ by the correspondent uniformities{\rm)} at the point $x\in X$ iff $\forall\ v\in\mathcal V$  $\exists\ u\in\mathcal U$ such that $f(\st (x, u))\subset \st (f(x), v)$. 
\end{lem}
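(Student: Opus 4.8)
The plan is to reduce the statement to the standard fact that, in a uniform space $(X,\mathcal U)$, the family $\{\st(x,u)\ |\ u\in\mathcal U\}$ is a base of (not necessarily open) neighborhoods of the point $x$ for the topology induced by $\mathcal U$. Indeed, in the cover description of a uniformity the induced topology is defined by declaring $x\in\Int A$ exactly when $\st(x,u)\subset A$ for some $u\in\mathcal U$; that these stars really generate a topology rests on the presence of star-refinements inside $\mathcal U$, but this is classical and I would simply cite it. The very same description holds at the point $f(x)$ in $(Y,\mathcal V)$. Once this is recorded, the equivalence is purely formal.

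First I would prove ``continuity at $x$ $\Rightarrow$ the star condition''. Fix $v\in\mathcal V$. Then $\st(f(x),v)$ is a neighborhood of $f(x)$, so by continuity of $f$ at $x$ there is a neighborhood $N$ of $x$ with $f(N)\subset\st(f(x),v)$. Using the neighborhood base just recalled, choose $u\in\mathcal U$ with $\st(x,u)\subset N$; then $f(\st(x,u))\subset f(N)\subset\st(f(x),v)$, which is exactly what is wanted.

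For the converse I would assume the star condition and take an arbitrary neighborhood $W$ of $f(x)$ in $Y$. Pick $v\in\mathcal V$ with $\st(f(x),v)\subset W$; by hypothesis there is $u\in\mathcal U$ with $f(\st(x,u))\subset\st(f(x),v)\subset W$. Since $\st(x,u)$ is a neighborhood of $x$ and $\st(x,u)\subset f^{-1}(W)$, the preimage of $W$ is a neighborhood of $x$, so $f$ is continuous at $x$. The only genuine content here is the neighborhood-base fact quoted in the first paragraph; with it in hand there is no real obstacle, and no further use of star-refinements of the covers $u$ and $v$ is needed beyond what is already built into ``$\mathcal U$ and $\mathcal V$ are uniformities''.
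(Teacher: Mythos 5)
Your proposal is correct and follows essentially the same route as the paper's own proof: both directions reduce to the standard fact that the stars $\st(x,u)$, $u\in\mathcal U$, form a neighborhood base at $x$ in the topology induced by the uniformity (the paper uses this implicitly via $\Int(\st(f(x),v))$ and the choice of $u$ with $\st(x,u)\subset O_x$). No substantive difference.
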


\begin{proof} 
Necessity. $\forall\ v\in\mathcal V$  the set $\Int (\st (f(x), v))$ is an open nbd of $f(x)$. There exist a nbd $O_x$ of $x$ such that $f(O_x)\subset\Int (\st (f(x), v))$ and $u\in\mathcal U$ such that $\st (x, u)\subset O_x$. Then $f(\st (x, u))\subset \st (f(x), v)$. 

Sufficiency. $\forall\ O_{f(x)}$ there exist $v\in\mathcal V$ such that $\st (f(x), v)\subset O_{f(x)}$ and $u\in\mathcal U$ such that $f(\st (x, u))\subset \st (f(x), v)$. Then $\Int (\st (x, u))$ is a nbd of $x$ and $f(\Int (\st (x, u)))\subset  O_{f(x)}$.
\end{proof}

\begin{rem}\label{denseent}
{\rm Let $A, B$ be subsets of a uniform space $(X, \mathcal U)$, $u, v\in \mathcal U$ are such that $\{\st (\st (x, v), v)\ |\ x\in X\}\succ\st (x, u)$. Then if $A'$ is a dense subset of $A$ and $A'\subset\st (B, v)$, then $A\subset\st (B, u)$. 

Indeed, $\forall\ x\in A$ $\exists\ a\in A'$ such that $x\in\st (a, v)$. For $a$ $\exists\ b\in B$ such that $a\in\st (b, v)$. Then $x\in\st (\st (b, v), v)\subset\st (b, u)\subset\st (B, u)$. }
\end{rem}


\section{Preliminaries} 

\subsection{$G$-spaces and self-inverse maps}

A $G$-space is a triple $(G, X, \theta)$, where $G$ is a topological group, $X$ is a space and $\theta: G\times X\to X$ is a left (continuous) action (abbreviation {\it $X$ is a $G$-space}). If $X$ is compact, then  $(G, X, \theta)$ is a {\it compact $G$-space}. 
The abbreviations $G\curvearrowright X$, $\theta (g, x)=gx$ for the action are used if the only action is used in the context. 
$$\theta (A, Y)=AY=\bigcup\{\theta (g, x)\ |\ g\in A\subset G,\ x\in Y\subset X\}.$$
A subset $Y$ of a $G$-space $X$ is an {\it invariant subset} if $GY=Y$. The restriction $\theta|_{G\times Y}$ of action $\theta$ to an invariant subset $Y$ is an action and  $(G, Y, \theta|_{G\times Y})$ is a $G$-space.

A subgroup $\St_x=\{g\in G\ |\ gx=x\}$ of $G$ is a {\it stabilizer} at a point $x\in X$. Action is {\it effective} if $e=\{g\in G\ |\ gx=x,\ \forall\ x\in X\}=\bigcap\{\St_x\ |\ x\in X\}$. Only $G$-spaces with effective actions are considered in the paper. 

A map $f: X\to Y$ of $G$-spaces $(G, X, \theta_X)$ and  $(G, Y, \theta_Y)$  is a {\it $G$-map} (or {\it equivariant map}) if $f\circ\theta_X=\theta_Y\circ (\id\times f)$.  If $f$ is not continuous, then it will be noted in the corresponding context.

\medskip

A map $s: X\to X$ is a {\it self-inverse} map if $s\circ s=\id$. 

\begin{df}
For spaces $X$ and $Y$ with self-inverse maps $s_X$ and $s_Y$ respectively, the map $f: X\to Y$ {\it commutes with self-inverse maps} if $$f\circ s_X=s_Y\circ f.$$ 
\end{df}

\begin{lem}\label{invaction}
If $(G, X, \theta)$ is a $G$-space with a self-inverse map $s$, then the {\it inverse action}  
$$\theta^* (g, x)=s(\theta (g, s(x))),\ g\in G,\ x\in X,$$
is correctly defined and $(\theta^*)^*=\theta$. 
\end{lem}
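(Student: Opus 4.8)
The plan is to verify directly that $\theta^*$ satisfies the two axioms of a left action and then check the self-duality identity $(\theta^*)^*=\theta$ by unwinding the definitions. First I would check continuity: $\theta^*$ is the composition $X\xrightarrow{\,\id_G\times s\,}G\times X\xrightarrow{\,\theta\,}X\xrightarrow{\,s\,}X$ read in the appropriate order, i.e. $\theta^*=s\circ\theta\circ(\id_G\times s)$, so it is continuous as a composition of continuous maps (here $s$ is continuous because it is a self-inverse \emph{map}, and maps in the paper are continuous by convention).

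Next I would verify the action axioms. For the identity axiom, $\theta^*(e,x)=s(\theta(e,s(x)))=s(s(x))=x$, using $\theta(e,y)=y$ and $s\circ s=\id$. For the associativity axiom, I compute, for $g,h\in G$ and $x\in X$,
$$\theta^*(g,\theta^*(h,x))=s\bigl(\theta\bigl(g,\,s(\theta^*(h,x))\bigr)\bigr)=s\bigl(\theta\bigl(g,\,s(s(\theta(h,s(x))))\bigr)\bigr)=s\bigl(\theta(g,\theta(h,s(x)))\bigr),$$
and then $\theta(g,\theta(h,s(x)))=\theta(gh,s(x))$ by the action axiom for $\theta$, so the right-hand side equals $s(\theta(gh,s(x)))=\theta^*(gh,x)$. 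Hence $\theta^*$ is a (continuous) left action, so $(G,X,\theta^*)$ is a $G$-space, and $s$ is still a self-inverse map on it.

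Finally, applying the construction once more to the pair $(\theta^*,s)$ gives, for all $g\in G$, $x\in X$,
$$(\theta^*)^*(g,x)=s\bigl(\theta^*(g,s(x))\bigr)=s\bigl(s(\theta(g,s(s(x))))\bigr)=s\bigl(s(\theta(g,x))\bigr)=\theta(g,x),$$
where the middle step uses the definition of $\theta^*$ and the last two steps use $s\circ s=\id$ twice. This proves $(\theta^*)^*=\theta$. There is no real obstacle here; the only point requiring a little care is keeping the order of composition with $s$ straight in the associativity computation, where one must insert and cancel an $s\circ s$ at the correct place.
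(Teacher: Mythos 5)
Your proposal is correct and follows essentially the same route as the paper's proof: direct verification of the identity and associativity axioms by inserting and cancelling $s\circ s$, continuity as a composition of continuous maps, and unwinding the definition twice for $(\theta^*)^*=\theta$. The only cosmetic difference is that you expand $\theta^*(g,\theta^*(h,x))$ while the paper expands $\theta^*(hg,x)$; the computation is the same identity read in the opposite direction.
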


\begin{proof}
Indeed, 
$$\theta^* (e, x)=s(\theta (e, s(x)))=s(s(x))=x,\  x\in X,$$
$$\theta^* (hg, x)=s(\theta (hg, s(x)))=s(\theta (h, \theta (g, s(x))))=\theta^*(h, s(\theta (g, s(x))))=\theta^*(h, \theta^*(g, x)),$$
$g, h\in G$, $x\in X$. $\theta^*$ is continuous as a composition of continuous maps and 
$$(\theta^*)^*(g, x)=s(\theta^* (g, s(x)))=s(s(\theta (g, s(s(x)))))=\theta (g, x).$$
\end{proof}

\begin{lem}\label{ginversg}
Let $(G, X, \theta_X)$ and  $(G, Y, \theta_Y)$ be $G$-spaces with self-inverse maps $s_X$ and $s_Y$ respectively, $f: X\to Y$ is a not continuous $G$-map which commutes with self-inverse maps. Then $f$ is a not continuous $G$-map of $G$-spaces $(G, X, \theta^*_X)$ and $(G, Y, \theta^*_Y)$.
\end{lem}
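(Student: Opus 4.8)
By Lemma~\ref{invaction} the inverse actions $\theta_X^*$ and $\theta_Y^*$ are well defined, so $(G,X,\theta_X^*)$ and $(G,Y,\theta_Y^*)$ are genuine $G$-spaces and it only remains to verify the equivariance identity
\[
f\circ\theta_X^*=\theta_Y^*\circ(\id\times f),
\]
that is, $f(\theta_X^*(g,x))=\theta_Y^*(g,f(x))$ for every $g\in G$ and $x\in X$. Since $f$ is not assumed continuous, I will not use any density or limiting argument; the claim is a purely pointwise identity, and the whole proof is a chain of substitutions.

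First I would expand the left-hand side using the definition of the inverse action on $X$: $f(\theta_X^*(g,x))=f\bigl(s_X(\theta_X(g,s_X(x)))\bigr)$. Now I push $f$ through $s_X$ using the hypothesis that $f$ commutes with the self-inverse maps, $f\circ s_X=s_Y\circ f$, obtaining $s_Y\bigl(f(\theta_X(g,s_X(x)))\bigr)$. Next I apply the hypothesis that $f$ is a $G$-map for the original actions, $f(\theta_X(g,z))=\theta_Y(g,f(z))$ with $z=s_X(x)$, which gives $s_Y\bigl(\theta_Y(g,f(s_X(x)))\bigr)$. Finally I use $f\circ s_X=s_Y\circ f$ once more, now on the inner occurrence $f(s_X(x))=s_Y(f(x))$, so the expression becomes $s_Y\bigl(\theta_Y(g,s_Y(f(x)))\bigr)=\theta_Y^*(g,f(x))$ by the definition of $\theta_Y^*$.

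This closes the chain of equalities and establishes the desired identity for all $g,x$, hence $f$ is a (not necessarily continuous) $G$-map from $(G,X,\theta_X^*)$ to $(G,Y,\theta_Y^*)$. There is no real obstacle here: the only point to be careful about is bookkeeping the two alternating uses of the commuting-with-self-inverse-maps property (once on the outer $s_X$, once on the inner one) and one use of the $G$-map property in between, and to note explicitly that none of the steps requires continuity of $f$, which is exactly why the conclusion can be stated for not necessarily continuous $G$-maps.
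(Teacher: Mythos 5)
Your proof is correct and follows exactly the same chain of substitutions as the paper's: expand $\theta_X^*$ by definition, commute $f$ past the outer $s_X$, apply the $G$-map identity, commute $f$ past the inner $s_X$, and recognize $\theta_Y^*$. The paper merely compresses the middle two steps into one displayed equality, so there is no substantive difference.
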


\begin{proof}
$$f(\theta^*_X(g, x))=f(s_X(\theta_X (g, s_X(x))))=s_Y(f(\theta_X (g, s_X(x))))=$$
$$s_Y(\theta_Y (g, s_Y(f(x))))=\theta^*_Y(g, f(x)),\ g\in G,\ x\in X.$$
\end{proof}

A uniformity $\mathcal U_X$ on $X$ ($(G, X, \theta)$ is a $G$-space, $\mathcal U_X$ unduces the original topology on $X$) is called an {\it equiuniformity}~\cite{Megr1984} if the action $\theta$ is {\it saturated} (any homeomorphism from $G$ is a uniformly continuous map of $(X, \mathcal U_X$)) and is {\it bounded} (for any $u\in\mathcal U_X$ there are $O\in N_G(e)$ and $v\in\mathcal U_X$ such that $Ov=\{OV\ |\ V\in v\}\succ u$). In this case $(G, X, \theta)$ a {\it $G$-Tychonoff space} (the abbreviation $X$ is $G$-Tychonoff), the action is extended to the continuous action  $\tilde\theta: G\times \tilde X\to\tilde X$ on the completion $(\tilde X, \tilde{\mathcal U}_X)$ of $(X, \mathcal U_X)$ ($(G, \tilde X, \tilde\theta)$ is a $G$-space). The extension $\tilde{\mathcal U}_X$ of $\mathcal U_X$ to $\tilde X$ is an equiuniformity on $\tilde X$. The embedding $\jmath: X\to\tilde X$ is a $G$-map, $\tilde X$ or the pair $(\tilde X, \jmath)$  is a {\it $G$-extension of $X$} and $\jmath (X)$ is a dense {\it invariant} subset of $\tilde X$~\cite{Megr0}. $G$-spaces as extensions will also be denoted as $(G, (\tilde X, \jmath), \tilde\theta)$.

\begin{rem}\label{contact}
 {\rm If  for an action $G\curvearrowright X$ (not continuous) of a topological group $G$ on a space $X$ the uniformity $\mathcal U_X$ on $X$ is saturated and  bounded, then the the action $G\curvearrowright X$ is continuous.}
\end{rem}

\begin{lem}\label{exact}
Let $(G, X, \theta)$ be a $G$-space, $X$ is a dense subset of $Y$ and the continuous map $\tilde\theta: G\times Y\to Y$ is such that $\tilde\theta|_{G\times X}=\theta$. Then $Y$ is a $G$-extension of $X$.
\end{lem}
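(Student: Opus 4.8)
The plan is to read off, directly from the definitions, that the inclusion $\iota\colon X\hookrightarrow Y$ exhibits $Y$ as a $G$-extension of $X$. Concretely, one must check three things: that $\iota$ is a dense topological embedding, that $\iota$ is a $G$-map of $(G,X,\theta)$ into $(G,Y,\tilde\theta)$, and that $\iota(X)=X$ is an invariant subset of the $G$-space $(G,Y,\tilde\theta)$. The first is given — $X$ carries the subspace topology and is dense in $Y$ by hypothesis — and the $G$-map property is equally immediate, since for $g\in G$ and $x\in X$ one has $\iota(\theta(g,x))=\theta(g,x)=\tilde\theta(g,x)=\tilde\theta(g,\iota(x))$, i.e. $\iota\circ\theta=\tilde\theta\circ(\id\times\iota)$, precisely because $\tilde\theta|_{G\times X}=\theta$. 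So the only real content is the invariance of $X$.

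For invariance I would argue as follows. Since $\theta$ maps $G\times X$ into $X$ and $\tilde\theta$ restricts to $\theta$ there, $\tilde\theta(g,x)=\theta(g,x)\in X$ for all $g\in G$, $x\in X$, hence $GX\subseteq X$. Conversely, $\tilde\theta$ being an action, $\tilde\theta(g,\cdot)$ and $\tilde\theta(g^{-1},\cdot)$ are mutually inverse self-homeomorphisms of $Y$; both send $X$ into $X$ by the previous sentence, so each $\tilde\theta(g,\cdot)$ restricts to a bijection of $X$, and in particular $X=\tilde\theta(\{e\}\times X)\subseteq\tilde\theta(G\times X)=GX$. Thus $GX=X$, so $X$ is an invariant subset of $(G,Y,\tilde\theta)$ and the restricted action $\tilde\theta|_{G\times X}$ is exactly $\theta$; therefore $(Y,\iota)$ — equivalently $(G,(Y,\iota),\tilde\theta)$ — is a $G$-extension of $X$.

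I do not expect a genuine obstacle: the statement is a recognition lemma asserting that mere continuity of the extended action, together with density of $X$, already forces $Y$ to contain $X$ as a dense invariant $G$-subspace, with no separate verification of the equiuniformity (saturated and bounded) conditions required. The one point that could trip a reader up is conflating ``$\tilde\theta$ restricted to $G\times X$ takes values in $X$'' with ``$X$ is $\tilde\theta$-invariant''; the short computation above, using $\tilde\theta(e,\cdot)=\id$ and the homeomorphism $\tilde\theta(g^{-1},\cdot)$, closes exactly this gap. (If in addition one wants the completion picture of the construction preceding the lemma, it further suffices that $Y$ be complete in some equiuniformity — automatic when $Y$ is compact — and to take $\mathcal U_X$ to be the trace of that equiuniformity on $X$, so that $\tilde X=Y$ by density.)
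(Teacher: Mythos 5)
There is a genuine gap: you have assumed the very thing the lemma is designed to prove. The hypothesis says only that $\tilde\theta\colon G\times Y\to Y$ is a \emph{continuous map} restricting to $\theta$ on $G\times X$ --- it is not given that $\tilde\theta$ is an action. Your argument invokes ``$\tilde\theta$ being an action, $\tilde\theta(g,\cdot)$ and $\tilde\theta(g^{-1},\cdot)$ are mutually inverse self-homeomorphisms of $Y$,'' and your closing summary speaks of ``mere continuity of the extended action,'' which shows you read the hypothesis as already asserting the action axioms for $\tilde\theta$. The actual content of the lemma (and the whole of the paper's proof) is to derive those axioms: since $\tilde\theta(e,\cdot)$ is continuous and agrees with the identity on the dense subset $X$ (where $\tilde\theta=\theta$), and $Y$ is Hausdorff, one gets $\tilde\theta(e,y)=y$ for all $y\in Y$; likewise $\tilde\theta(gh,\cdot)$ and $\tilde\theta(g,\tilde\theta(h,\cdot))$ are continuous maps agreeing on $X$, hence equal on $Y$. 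This is exactly how the lemma is later used (e.g.\ in Remark on sm-compactifications and in Proposition on the implication diagram), where $\tilde\alpha(g,x)=g\bullet x$ is a priori only a continuous map and one needs to conclude it is an action.

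By contrast, the part you identify as ``the only real content'' --- invariance of $X$ --- is the trivial part: $\tilde\theta(g,x)=\theta(g,x)\in X$ because $\theta$ is an action on $X$, and $X\subseteq GX$ already follows from $\theta(e,x)=x$ without any appeal to $\tilde\theta$. Your parenthetical about completions and equiuniformities is a reasonable aside, but it does not repair the missing step. To fix the proof, delete the appeal to $\tilde\theta$ being an action and insert the density-plus-continuity argument above establishing the two action identities on all of $Y$; the rest of what you wrote then goes through.
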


\begin{proof} Since $\tilde\theta$ is continuous,  $X$ is dense in $Y$  and $\tilde\theta|_{G\times X}=\theta$
$$\theta (e, x)=x,\ x\in X, \Longrightarrow \tilde\theta (e, y)=y,\ y\in Y,$$
$$\theta (gh, x)=\theta (g, \alpha_X (h, x)),\ x\in X, \Longrightarrow \tilde\theta (gh, y)=\tilde\theta (g, \tilde\theta (h, y)),\ y\in Y,\ g, h\in G.$$
Hence $(G, Y, \tilde\theta)$ is a $G$-space and $Y$ is a $G$-extension of $X$. 
\end{proof}

If $\mathcal U_X$ is  a totally bounded equiuniformity on $X$, then $(b X=\tilde X, \jmath_b)$ is a  {\it $G$-compactification}  or an {\it equivariant compactification} of  $X$. If $X$ is a $G$-Tychonoff space,  then the maximal totally bounded equiuniformity $p\mathcal U^{max}_X$ on $X$ exists and {\it the maximal $G$-compactification}  $(\beta_G X, \jmath_{\beta})$ of $X$ is the completion of $(X, p\mathcal U^{max}_X)$. The maximal totally bounded equiuniformity  $p\mathcal U_X^{max}$ on $X$ is the {\it precompact replica} of the maximal equiuniformity $\mathcal U_X^{max}$ on $X$ (see~\cite[Ch.~2]{Isbell}) and $\beta_G X$  is the {\it Samuel compactification of $(X, \mathcal U_X^{max})$} (see~\cite[Ch.\ 8, Problem 8.5.7]{Engelking}). 

\begin{lem}\label{invunif}
Let $(G, X, \theta)$ be a $G$-Tychonoff space with a self-inverse map $s$, $\mathcal U_X$ is an equiuniformity on $X$. Then 
$(G, X, \theta^*)$ is a $G$-Tychonoff space and 
$$\mathcal U^*_X=\{u^*=\{s(U)\ |\ U\in u\}\ |\ u\in\mathcal U_X\}$$
is an equiuniformity on $X$.
\end{lem}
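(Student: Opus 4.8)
The plan is to use the standing convention that all maps are continuous, so that the self-inverse map $s$ is a homeomorphism of $X$ onto itself with $s^{-1}=s$. Then $\mathcal U^*_X$ is simply the transport of the uniformity $\mathcal U_X$ along $s$: the covers $u^*=\{s(U)\ |\ U\in u\}$ are open covers of $X$, and $u\mapsto u^*$ is a bijection of $\mathcal U_X$ onto $\mathcal U^*_X$ which, $s$ being a homeomorphism, preserves the refinement relation $\succ$, stars, and finite meets. Hence $\mathcal U^*_X$ is a uniformity on $X$; and since $\st(x,u^*)=s(\st(s(x),u))$ and $\mathcal U_X$ induces the original topology, so does $\mathcal U^*_X$. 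By Lemma~\ref{invaction}, $(G,X,\theta^*)$ is already a $G$-space, so what remains is to verify that $\theta^*$ is saturated and bounded with respect to $\mathcal U^*_X$; once this is done, $\mathcal U^*_X$ is an equiuniformity and $(G,X,\theta^*)$ is $G$-Tychonoff.

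For saturation, fix $g\in G$ and write $\theta^*_g\colon x\mapsto\theta^*(g,x)$. Straight from the definition, $\theta^*_g=s\circ\theta_g\circ s$, where $\theta_g\colon x\mapsto\theta(g,x)$ is uniformly continuous on $(X,\mathcal U_X)$ because $\mathcal U_X$ is saturated. Since $s$ is a uniform isomorphism of $(X,\mathcal U_X)$ onto $(X,\mathcal U^*_X)$ and (being its own inverse) also of $(X,\mathcal U^*_X)$ onto $(X,\mathcal U_X)$, the map $\theta^*_g$ is uniformly continuous on $(X,\mathcal U^*_X)$ if and only if $s\circ\theta^*_g\circ s=\theta_g$ is uniformly continuous on $(X,\mathcal U_X)$, which it is. Thus every $\theta^*_g$ is a uniformly continuous self-map of $(X,\mathcal U^*_X)$.

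For boundedness, take $u^*\in\mathcal U^*_X$ with $u\in\mathcal U_X$, and use that $\mathcal U_X$ is bounded to pick $O\in N_G(e)$ and $v\in\mathcal U_X$ with $\{\theta(O,V)\ |\ V\in v\}\succ u$. The one computation needed is that, using $s\circ s=\id$, for every $V\in v$
$$\theta^*(O,s(V))=\bigcup\{s(\theta(g,s(s(x))))\ |\ g\in O,\ x\in V\}=\bigcup\{s(\theta(g,x))\ |\ g\in O,\ x\in V\}=s(\theta(O,V)).$$
Hence $\{\theta^*(O,V^*)\ |\ V^*\in v^*\}=\{s(\theta(O,V))\ |\ V\in v\}$, and applying the homeomorphism $s$ to the refinement $\{\theta(O,V)\ |\ V\in v\}\succ u$ gives $\{s(\theta(O,V))\ |\ V\in v\}\succ u^*$, i.e. $Ov^*\succ u^*$ with $v^*\in\mathcal U^*_X$. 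So $\theta^*$ is bounded with respect to $\mathcal U^*_X$.

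The argument is essentially bookkeeping, and I do not expect a genuine obstacle. The only point that must be stated carefully is that a continuous self-inverse map is automatically a homeomorphism with $s^{-1}=s$, so that "push forward along $s$'' and "pull back along $s$'' coincide; this is exactly what makes both the transport $\mathcal U_X\rightsquigarrow\mathcal U^*_X$ and the identity $\theta^*_g=s\circ\theta_g\circ s$ behave well, and without it the symmetry of the construction would be lost.
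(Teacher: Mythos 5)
Your proof is correct and follows essentially the same route as the paper: both treat $\mathcal U^*_X$ as the transport of $\mathcal U_X$ along the homeomorphism $s$ and verify saturation and boundedness by pushing covers through $s$, using the identity $\theta^*(g,s(V))=s(\theta(g,V))$. The only cosmetic difference is that you phrase saturation via the conjugation $\theta^*_g=s\circ\theta_g\circ s$ and uniform isomorphisms, where the paper does the equivalent cover computation directly, and you cite Lemma~\ref{invaction} for continuity of $\theta^*$ where the paper invokes Remark~\ref{contact}.
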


\begin{proof} Since $s$ is a homeomorphism, it is easy to verify that $\mathcal U^*_X$ is a uniformity on $X$. Moreover, $(u^*)^*=u$, $ u\in\mathcal U_X$. 

$\mathcal U^*_X$ is saturated. Indeed, take $u^*\in\mathcal U^*_X$ and $g\in G$. Then for $u=(u^*)^*$ $\exists\ v\in\mathcal U_X$ such that 
$\{\theta (g, V)\ |\ V\in v\}\succ u$. Hence, 
$$\{\theta^* (g, s(V))=s(\theta (g, V))\ |\ V\in v\}\succ u^*$$
and $\{\theta^* (g, V)\ |\ V\in v^*\}\succ u^*$.

$\mathcal U^*_X$ is bounded. Indeed, take $u^*\in\mathcal U^*_X$. Then for $u=(u^*)^*$ $\exists\ v\in\mathcal U_X$ and $\exists\ O\in N_G(e)$ such that 
$\{\theta (O, V)\ |\ V\in v\}\succ u$. Hence, 
$$\{\theta^* (O, s(V))=s(\theta (O, V))\ |\ V\in v\}\succ u^*$$
and $\{\theta^* (O, V)\ |\ V\in v^*\}\succ u^*$. By Remark~\ref{contact} $(G, X, \theta^*)$ is a $G$-Tychonoff space.
\end{proof}

\begin{cor}
Let $(G, X, \theta)$ be a $G$-Tychonoff space with a self-inverse map $s$. 

If $\mathcal U_X=\mathcal U^*_X$, then $\mathcal U_X$ is an equiuniformity on $X$ for both actions $\theta$ and $\theta^*$.

If  $\mathcal U_X$ is an equiuniformity on $X$ for both actions $\theta$ and $\theta^*$, then $\mathcal U^*_X$ is  an equiuniformity on $X$ for both actions  $\theta$ and $\theta^*$.
\end{cor}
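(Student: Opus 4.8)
The plan is to obtain both statements from Lemma~\ref{invunif} by exploiting its self-dual form, together with the identities $(\theta^*)^*=\theta$ from Lemma~\ref{invaction} and $(u^*)^*=u$ for $u\in\mathcal U_X$ noted in the proof of Lemma~\ref{invunif}. The guiding observation is that the passages $\theta\mapsto\theta^*$ and $u\mapsto u^*$ are involutions, and that the construction $u\mapsto u^*$ depends only on $\mathcal U_X$ and on the self-inverse map $s$, not on the action; so Lemma~\ref{invunif} may be applied once to $\theta$ and once to $\theta^*$ with no additional computation.

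For the first implication, recall that $\mathcal U_X$ is by assumption an equiuniformity for $\theta$. By Lemma~\ref{invunif}, $\mathcal U^*_X$ is an equiuniformity for $\theta^*$ (and $(G,X,\theta^*)$ is $G$-Tychonoff). Since $\mathcal U_X=\mathcal U^*_X$, the very same uniformity $\mathcal U_X$ is simultaneously an equiuniformity for $\theta$ and for $\theta^*$.

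For the second implication, apply Lemma~\ref{invunif} to $(G,X,\theta)$ with $\mathcal U_X$ to conclude that $\mathcal U^*_X$ is an equiuniformity for $\theta^*$. It then remains to show that $\mathcal U^*_X$ is also an equiuniformity for $\theta$. Here one uses the hypothesis that $\mathcal U_X$ is an equiuniformity for $\theta^*$, so that $(G,X,\theta^*)$ is a $G$-Tychonoff space with self-inverse map $s$; applying Lemma~\ref{invunif} to this $G$-space with the uniformity $\mathcal U_X$ produces the uniformity $(\mathcal U_X)^*=\mathcal U^*_X$ as an equiuniformity for $(\theta^*)^*$. Since $(\theta^*)^*=\theta$ by Lemma~\ref{invaction}, this yields the claim.

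The only point needing attention is justifying that Lemma~\ref{invunif} is legitimately applicable to $(G,X,\theta^*)$ in the second implication: that $(G,X,\theta^*)$ is $G$-Tychonoff (immediate from the hypothesis, and also from the first application of the lemma), that $s$ remains a self-inverse map (unchanged), and that the ``starred'' uniformity built from $\mathcal U_X$ and $s$ in Lemma~\ref{invunif} is literally $\mathcal U^*_X$ regardless of which action is in play, so that the iterated application returns $\mathcal U^*_X$ and not some other uniformity. Once these bookkeeping matters are checked, the corollary follows with no further work.
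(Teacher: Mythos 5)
Your proof is correct and follows exactly the route the paper intends: the corollary is stated without proof as an immediate consequence of Lemma~\ref{invunif}, and your double application of that lemma (once to $\theta$, once to $\theta^*$), combined with the observations that $(\theta^*)^*=\theta$ and that the starred uniformity depends only on $s$ and $\mathcal U_X$, is precisely the implicit argument. The bookkeeping points you flag are the right ones and are all satisfied.
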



\subsection{Topological groups (uniformities, involution, left action)}
On a topological group $G$ four uniformities are well-known. The {\it right uniformity} $R$ (the base is formed by the uniform covers $\{Og=\bigcup\{hg\ |\ h\in O\}\ |\ g\in G\}$, $O\in N_G(e)$). The {\it left uniformity} $L$ (the base is formed by the uniform covers $\{gO=\bigcup\{gh\ |\ h\in O\}\ |\ g\in G\}$, $O\in N_G(e)$).  The {\it two-sided uniformity} $R\vee L$ (the least upper bound of the right and left uniformities). And the {\it Roelcke uniformity} $L\wedge R$ (the greatest lower bound of the right and left uniformities) (the base is formed by the uniform covers $\{OgO=\bigcup\{hgh'\ |\ h, h'\in O\}\ |\ g\in G\}$, $O\in N_G(e)$). A group  $G$ is {\it Roelcke precompact} if Roelcke uniformity is totally bounded. 

All the necessary information about these uniformities can be found in~\cite{RD}. 

{\it Roelcke compactification} $b_r G$ of $G$ is the Samuel compactification of $(G, L\wedge R)$, i.e. the completion of $(G, (L\wedge R)_{fin})$, where $(L\wedge R)_{fin}$ is the {\it precompact replica} of $L\wedge R$. If $G$ is Roelcke precompact, then $(L\wedge R)_{fin}=L\wedge R$ and {\it Roelcke compactification} $b_r G$ is the completion of $(G, L\wedge R)$.

\medskip 

On a topological group $G$ the left multiplication defines the {\it left action} $\alpha: G\times G\to G$, $\alpha (g , h)=gh$. The action $\alpha$ defines the right uniformity $R$ on $G$ with the base 
$$\{\{Ox=\alpha (g, x)\ |\  g\in O\}\ |\  x\in G\},\ O\in N_G(e).$$

\medskip 

{\it Involution on a topological group} $G$ is the self-inverse map $*:G\to G$, $*(g)=g^{-1}$, $g\in G$, and $*(gh)=*(h)*(g)$. Element $g^{-1}$ is called the  {\it inverse} of $g$.

By Lemma~\ref{invaction} the inverse action 
$$\alpha^* (g, h)=*(\alpha (g, *(h)))=hg^{-1},\ g, h\in G,$$
is defined.

The base of the left uniformity $L$ on $G$ can be defined as 
$$\{\{xO=\alpha^* (g, x))\ |\  g\in O^{-1}\}\ |\ x\in G\},\ O\in N_G(e),$$
and the base of the Roelcke uniformity $L\wedge R$ on $G$ can be defined as 
$$\{\{OxU=\alpha(h, \alpha^* (g, x)))\ |\  h\in O,\ g\in U^{-1}\}\ |\ x\in G\},\ O, U\in N_G(e).$$

\begin{pro}\label{unifcontRoelcke}
Let $(G, X, \theta)$ and $(G, X, \theta')$ be $G$-Tychonoff spaces, $\mathcal U_X$ is an equiuniformity on $X$ for both actions $\theta$ and $\theta'$ and $f: G\to X$ is a not continuous $G$-map of $G$-spaces $(G, G, \alpha)$,  $(G, X, \theta)$ and $G$-spaces $(G, G, \alpha^*)$,  $(G, X, \theta')$. 

Then $f: (G, L\wedge R)\to (X, \mathcal U_X)$ is uniformly continuous and, hence, continuous.
\end{pro}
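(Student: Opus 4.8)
The plan is to verify the $\varepsilon$-$\delta$ criterion for uniform continuity supplied by Lemma~\ref{cont}, using the explicit bases for the Roelcke uniformity $L\wedge R$ described just above the proposition. Recall that a base of $L\wedge R$ consists of the covers $\{OxU\mid x\in G\}$ where $O,U\in N_G(e)$ and $OxU=\{\alpha(h,\alpha^*(g,x))\mid h\in O,\ g\in U^{-1}\}$; equivalently $\st(x,OxU)=OxU$ for every $x$. So it suffices to show: for every $w\in\mathcal U_X$ there exist $O,U\in N_G(e)$ with $f(OxU)\subset\st(f(x),w)$ for all $x\in G$.

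First I would split $w$ using boundedness of $\mathcal U_X$ for the two actions separately. Since $\mathcal U_X$ is bounded for $\theta$, pick $v\in\mathcal U_X$ and $O\in N_G(e)$ with $\{\theta(O,V)\mid V\in v\}\succ w$; since $\mathcal U_X$ is bounded for $\theta'$, pick $u\in\mathcal U_X$ and $U'\in N_G(e)$ with $\{\theta'(U',V)\mid V\in u\}\succ v$ (shrinking so that $u\succ v$ as well, which is harmless). Set $U=(U')^{-1}$. Now take any $x\in G$ and any point $f(\alpha(h,\alpha^*(g,x)))$ with $h\in O$, $g\in U^{-1}=U'$. Because $f$ is a $G$-map from $(G,G,\alpha)$ to $(G,X,\theta)$ and from $(G,G,\alpha^*)$ to $(G,X,\theta')$, we may push $f$ through both actions:
\[
f\bigl(\alpha(h,\alpha^*(g,x))\bigr)=\theta\bigl(h,\,f(\alpha^*(g,x))\bigr)=\theta\bigl(h,\,\theta'(g,f(x))\bigr).
\]
By the choice of $u$ and $U'$, the point $\theta'(g,f(x))$ lies in $\st(f(x),v)$, hence in some $V\in v$ together with $f(x)$; then applying $\theta(h,\cdot)$ and the choice of $v$ and $O$, the point $\theta(h,\theta'(g,f(x)))$ lies in $\st(\theta(h,f(x)),w)$. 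Taking $h=e$ shows in particular $\theta(h,f(x))\in\st(f(x),w)$, but to close the argument cleanly I would instead iterate the star estimate: first passing through $\theta'$ moves $f(x)$ within one $v$-star, then passing through $\theta$ moves it within one further $w$-star of that, so after absorbing one more halving of $w$ at the outset (choosing $w'$ with stars of $w'$ refining $w$) we land inside $\st(f(x),w)$ as required. This gives $f(\st(x,OxU))=f(OxU)\subset\st(f(x),w)$, and by Lemma~\ref{cont} $f$ is uniformly continuous; uniform continuity always implies continuity in the induced topologies.

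The step I expect to be the main obstacle is the bookkeeping of \emph{which} action's boundedness controls \emph{which} neighborhood, and correctly matching $g\in U^{-1}$ in the Roelcke base with the set $U'$ produced by boundedness of $\theta'$ (this is where the inverse appears, exactly as in the displayed base of $L\wedge R$ built from $\alpha$ and $\alpha^*$). One must also be mildly careful that a single $v\in\mathcal U_X$ can be chosen to serve both as the output of the $\theta$-boundedness step and as a cover finer than what the $\theta'$-boundedness step needs; this is arranged by taking a common refinement at the start, using that $\mathcal U_X$ is a genuine filter of covers. Everything else — that composition of continuous maps is continuous, that $f$ intertwines the actions by hypothesis — is either given or immediate.
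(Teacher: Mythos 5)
Your proposal is correct and follows essentially the same route as the paper: decompose the Roelcke basic set $OxU$ as $\alpha(O,\alpha^*(U^{-1},x))$, push $f$ through both actions via the two $G$-map hypotheses to get $\theta(h,\theta'(g,f(x)))$, and chain the boundedness of $\mathcal U_X$ for $\theta'$ and then for $\theta$. The paper's version is marginally cleaner in that it observes $f(OxU)\subset\theta(O,\theta'(W,f(x)))$ lands inside a \emph{single} member of the target cover (since $e\in O$ and $e\in W$), so no extra star-refinement of $w$ is needed; but your bookkeeping of which action's boundedness controls which neighborhood is exactly right.
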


\begin{proof}
Take an arbitrary $u\in\mathcal U_X$. Since $\mathcal U_X$ is an equiuniformity on $X$ (for actions  $\theta$ and  $\theta'$), there exist $O,\ W\in N_G(e)$ and  $v\in\mathcal U_X$ such that 
$$\{\theta (O, V)\ |\ V\in v\}\succ u\ \mbox{and}\eqno{\rm(O)}$$
$$\{\theta' (W, x)\ |\ x\in X\}\succ v.\eqno{\rm(W)}$$ 
For the cover $\{OgW=\alpha (O, \alpha^*(W, g))\ |\ g\in G\}\in L\wedge R$ one has 
$$f(OgW)=\theta (O, \theta' (W, f(g)))\stackrel{x=f(g)}{=}\theta (O, \theta' (W, x)).$$
By (W) $\exists\ V\in v\ \mbox{such that}\  \theta' (W, x)\subset V$. By (O) $\exists\ U\in u\ \mbox{such that}\  \theta (O, V)\subset U$. 
Therefore, $\forall\ g\in G$ $\exists\ U\in u$ such that $f(OgW)\subset U$. Hence, $f: (G, L\wedge R)\to (X, \mathcal U_X)$ is uniformly continuous. 
\end{proof}

From Proposition~\ref{unifcontRoelcke} and Lemma~\ref{ginversg} it follows.

\begin{cor}\label{cor1}
Let $(G, X, \theta)$ be a $G$-space with self-inverse map $s$, $f: G\to X$ is a not continuous $G$-map of $G$-spaces $(G, G, \alpha)$ and $(G, X, \theta)$ which commutes with involution on $G$ and $s$ on $X$. 
Then $f$ is a not continuous $G$-map of $G$-spaces $(G, G, \alpha^*)$ and  $(G, X, \theta^*)$.

If, additionally, $X$ is a $G$-Tychonoff space and $\mathcal U_X$ is an equiuniformity on $X$ for actions  $\theta$ and  $\theta^*$, then $f: (G, L\wedge R)\to (X, \mathcal U_X)$ is uniformly continuous and, hence, continuous.
\end{cor}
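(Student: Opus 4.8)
The plan is to deduce Corollary~\ref{cor1} directly from the two results it is said to follow from, namely Lemma~\ref{ginversg} and Proposition~\ref{unifcontRoelcke}, with essentially no new computation. The statement has two halves, and I would treat them in order.

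\emph{First half.} We are given that $f\colon G\to X$ is a (not necessarily continuous) $G$-map of the $G$-spaces $(G,G,\alpha)$ and $(G,X,\theta)$ which commutes with the self-inverse maps $*$ on $G$ and $s$ on $X$. Recall that $G$ itself carries the involution $*$ as a self-inverse map, and by the discussion preceding the corollary the inverse action associated to $\alpha$ and $*$ is exactly $\alpha^*$, i.e. $\alpha^* (g,h)=*(\alpha(g,*(h)))=hg^{-1}$. Likewise, by Lemma~\ref{invaction} the inverse action $\theta^*$ associated to $\theta$ and $s$ is defined. So $(G,G,\alpha)$ with $*$ and $(G,X,\theta)$ with $s$ are exactly the data of Lemma~\ref{ginversg}, with the roles of $(G,X,\theta_X)$ and $(G,Y,\theta_Y)$ played by $(G,G,\alpha)$ and $(G,X,\theta)$, and of $s_X,s_Y$ by $*,s$. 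Applying Lemma~\ref{ginversg} immediately yields that $f$ is a (not necessarily continuous) $G$-map of $(G,G,\alpha^*)$ and $(G,X,\theta^*)$, which is the first assertion.

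\emph{Second half.} Now assume in addition that $X$ is $G$-Tychonoff and $\mathcal U_X$ is an equiuniformity on $X$ for both actions $\theta$ and $\theta^*$. I want to invoke Proposition~\ref{unifcontRoelcke} with $\theta'=\theta^*$. To do so I must check its hypotheses: (i) $(G,X,\theta)$ and $(G,X,\theta^*)$ are $G$-Tychonoff spaces --- the first is given, the second follows from Lemma~\ref{invunif} (or simply from the assumption that $\mathcal U_X$ is an equiuniformity for $\theta^*$); (ii) $\mathcal U_X$ is an equiuniformity on $X$ for both $\theta$ and $\theta^*$ --- this is exactly the added hypothesis; (iii) $f$ is a not-necessarily-continuous $G$-map of $(G,G,\alpha)$, $(G,X,\theta)$ and of $(G,G,\alpha^*)$, $(G,X,\theta^*)$ --- the first of these is the original hypothesis and the second was just established in the first half. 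Hence Proposition~\ref{unifcontRoelcke} applies verbatim and gives that $f\colon (G,L\wedge R)\to (X,\mathcal U_X)$ is uniformly continuous, and therefore continuous.

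The content of the corollary is thus entirely a matter of correctly matching data to the hypotheses of the two cited results; there is no genuine obstacle to overcome. The one point that requires a moment's care --- and the only place where I would write anything down explicitly --- is the observation that the inverse action of $\alpha$ with respect to the involution $*$ coincides with $\alpha^*$, so that commuting with $(*,s)$ is precisely the hypothesis of Lemma~\ref{ginversg} for the pair of actions $(\alpha,\theta)$; once that identification is noted, both halves are immediate citations. (Strictly, Lemma~\ref{ginversg} is stated for a $G$-map between two arbitrary $G$-spaces with self-inverse maps, and here the source $G$-space is $(G,G,\alpha)$ with $s_X=*$; this is legitimate because $G$ with its multiplication and involution is such a $G$-space, as recorded in the subsection on topological groups.)
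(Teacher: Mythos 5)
Your proposal is correct and follows exactly the route the paper intends: the paper gives no separate proof, stating only that the corollary ``follows from Proposition~\ref{unifcontRoelcke} and Lemma~\ref{ginversg},'' and your matching of data --- Lemma~\ref{ginversg} applied to $(G,G,\alpha)$ with $*$ and $(G,X,\theta)$ with $s$ for the first half, then Proposition~\ref{unifcontRoelcke} with $\theta'=\theta^*$ for the second --- is precisely that deduction.
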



\subsection{Compact-open topology and topology of pointwise convergence}\label{topolco} 

For an action $G\curvearrowright X$ of a group $G$ on a topological space $X$ the subbase of {\it compact-open topology} $\tau_{co}$ on $G$ is formed by sets 
$$[K, O]=\{g\in G\ |\ gK\subset O\},\ \mbox{where}\ K\ \mbox{is a compact},\ O\ \mbox{is an open subset of}\ X.$$
If $X$ is compact, then $\tau_{co}$ is the least {\it admissible group topology} on $G$ ($(G, \tau_{co})$ is a topological group and the action $G\curvearrowright X$ is continuous)~\cite{Arens}. The notation {\it $((G, \tau_{co}), X, \theta)$ is a $G$-space} will mean that $\tau_{co}$ is a group topology on $G$ and the action $\theta$ is continuous.

For an action of a group $G$ on a topological space $X$ the subbase of {\it topology of pointwise convergence} $\tau_p$ on $G$ is formed by sets 
$$[x, O]=\{g\in G\ |\ gx\in O\},\ \mbox{where}\ x\in X,\ O\ \mbox{is an open subset of}\ X.$$
Evidently, $\tau_p\subset\tau_{co}$.

For a uniform space $(X, \mathcal L)$ an action $\theta: G\times (X, \mathcal L)\to  (X, \mathcal L)$ is {\it uniformly equicontinuous} if $\{\theta^g: X\to X,\ \theta^g(x)=\theta (g, x)\ |\ g\in G\}$  is a uniformly equicontinuous family of maps on $X$. Equivalently, for any $u\in\mathcal L$ there exists $v\in\mathcal L$  such that $gv=\{gV\ |\ V\in v\}\succ u$, for any $g\in G$. 
If an action  $G\curvearrowright (X, \mathcal L)$ is uniformly equicontinuous, then the topology of pointwise convergence $\tau_p$ is the least admissible group topology on $G$~\cite[Lemma 3.1]{Kozlov2022}. Moreover, $((G, \tau_p), X, \theta)$ is a $G$-Tychonoff space, see, for instance, \cite{Megr1984}. The notation  {\it $((G, \tau_{p}), X, \theta)$ is a $G$-space} will mean that $\tau_{p}$ is a group topology on $G$ and the action $\theta$ is continuous.

\begin{pro}\label{topcoex} 
Let $G=(G, \tau)$ be a topological group, $b G$ is a $G$-compactification of $G$ {\rm(}of $G$-space $(G, G, \alpha)$, $(G, b G, \tilde\alpha)$ is a $G$-space, $\tilde\alpha|_{G\times G}=\alpha${\rm)}, $G\subset X\subset b G$ is an invariant subset {\rm(}$(G, X, \tilde\alpha_X)$ is a $G$-space, $\tilde\alpha_X|_{G\times G}=\alpha${\rm)}. Then $((G, \tau_{co}), X, \tilde\alpha_X)$ and $((G, \tau_p), X, \tilde\alpha_X)$ are $G$-spaces and $\tau=\tau_{co}=\tau_p$. 
\end{pro}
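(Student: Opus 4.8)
The plan is to show the chain of inclusions $\tau \subseteq \tau_p \subseteq \tau_{co} \subseteq \tau$ of topologies on the underlying set $G$; the middle inclusion is automatic (it is noted in the excerpt), so the work is to prove $\tau \subseteq \tau_p$ and $\tau_{co} \subseteq \tau$. Once these are established, all three topologies coincide, and since the action $\tilde\alpha_X$ is continuous with respect to $\tau$ (it is the restriction of the continuous action on $bG$ to the invariant subset $X$), it is continuous with respect to $\tau_p$ and $\tau_{co}$ as well, so $((G,\tau_{co}),X,\tilde\alpha_X)$ and $((G,\tau_p),X,\tilde\alpha_X)$ are indeed $G$-spaces.

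First I would prove $\tau \subseteq \tau_p$. A subbasic $\tau_p$-set is $[x,O] = \{g \in G : gx = \tilde\alpha_X(g,x) \in O\}$ for $x \in X$ and $O$ open in $X$; since $\tilde\alpha_X : (G,\tau)\times X \to X$ is continuous, the map $g \mapsto \tilde\alpha_X(g,x)$ is $\tau$-continuous for fixed $x$, so $[x,O]$ is $\tau$-open. Hence $\tau_p \subseteq \tau$ (this is the reverse of one of the needed inclusions) — wait, this gives $\tau_p \subseteq \tau$, which together with $\tau_p \subseteq \tau_{co}$ is not yet enough. The point I actually want is $\tau \subseteq \tau_p$: here I would use that the original action $\alpha$ already recovers $\tau$ as the right uniformity, equivalently the evaluation of $\alpha$ at the single point $e \in G$ recovers neighborhoods of any $g$, since $\{g \in G : ge \in O\} = \{g : g \in O\} = O$ for $O$ a $\tau$-open subset of $G$; so every $\tau$-open subset of $G$ of the form $O \cap G$ (with $O$ open in $bG$) is of the shape $[e, O] \cap G$, a subbasic $\tau_p$-set for the action on $X$. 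Thus $\tau \subseteq \tau_p$, and combined with $\tau_p \subseteq \tau$ we already get $\tau = \tau_p$, hence also $\tau \subseteq \tau_{co}$.

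It remains to prove $\tau_{co} \subseteq \tau$, i.e.\ that each subbasic compact-open set $[K,O] = \{g \in G : gK \subseteq O\}$ (with $K \subseteq X$ compact, $O \subseteq X$ open) is $\tau$-open. This is the main obstacle and the place where compactness of $K$ and of $bG$ must be exploited. Fix $g_0 \in [K,O]$. For each $k \in K$ we have $\tilde\alpha_X(g_0,k) \in O$; by continuity of $\tilde\alpha_X$ on $(G,\tau)\times X$ there are a $\tau$-neighborhood $V_k$ of $g_0$ in $G$ and a neighborhood $W_k$ of $k$ in $X$ with $\tilde\alpha_X(V_k \times W_k) \subseteq O$. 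The $W_k$ cover the compact set $K$, so finitely many $W_{k_1},\dots,W_{k_n}$ suffice; then $V := V_{k_1}\cap\dots\cap V_{k_n}$ is a $\tau$-neighborhood of $g_0$ with $gK \subseteq O$ for all $g \in V$, i.e.\ $V \subseteq [K,O]$. Hence $[K,O]$ is $\tau$-open, giving $\tau_{co} \subseteq \tau$.

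Putting the three inclusions together, $\tau = \tau_p = \tau_{co}$. Finally, since $((G,\tau),X,\tilde\alpha_X)$ is a $G$-space by hypothesis and the group topology and action have not changed, $((G,\tau_{co}),X,\tilde\alpha_X)$ and $((G,\tau_p),X,\tilde\alpha_X)$ are $G$-spaces, completing the proof. The only subtlety to double-check in writing this up is that the standard argument for $[K,O]$ being open requires joint continuity of the action in the $\tau$-topology, which is exactly what we are given, and that $K$ being a compact subset of the Tychonoff (hence regular) space $X$ lets the finite-subcover step go through without extra hypotheses.
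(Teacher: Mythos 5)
Your proof is correct, and it reaches the conclusion by a more direct route than the paper. The paper argues in three stages: first $X=G$ (where it obtains $\tau_p^G\subset\tau$ from uniform equicontinuity of the left action with respect to $L$, and $\tau_{co}^G\subset\tau$ by the finite-subcover argument), then $X=bG$ (where $\tau_{co}^{bG}\subset\tau$ is deduced from Arens' theorem that the compact-open topology is the least admissible group topology on the homeomorphism group of a compact space), and finally general $G\subset X\subset bG$ by extending compact-open and pointwise subbasic sets to $bG$ and sandwiching. You instead work with an arbitrary invariant $X$ from the start and prove all three inclusions by elementary means: $\tau_p\subset\tau$ from continuity of the orbit maps, $\tau\subset\tau_p$ from the observation $[e,\tilde O\cap X]=\tilde O\cap G$ (the same $[e,O]=O$ trick the paper uses), and $\tau_{co}\subset\tau$ by the standard joint-continuity-plus-finite-subcover argument applied directly to a compact $K\subset X$ --- which is legitimate, since only compactness of $K$ and joint continuity of $\tilde\alpha_X$ on $(G,\tau)\times X$ are needed, not compactness of $X$. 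This buys you independence from Arens' theorem and from the uniform-equicontinuity lemma the paper cites, and the fact that $\tau_{co}$ and $\tau_p$ are group topologies comes for free once they coincide with $\tau$. Two cosmetic points: the parenthetical worry at the end about needing regularity of $X$ for the finite-subcover step is unfounded (no separation axiom is used there), and the mid-proof detour where you momentarily prove the wrong inclusion should be excised in a final write-up; neither affects correctness.
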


\begin{proof} 
(i) $\tau=\tau^G_{co}=\tau^G_p$ if $X=G$ ($\tau^G_{co}$ and $\tau^G_p$ are compact-open topology and topology of pointwise convergence for the action $\alpha$). Indeed, the action  $\alpha: G\times G\to G$ is uniformly equicontinuous with respect to the left uniformity $L$ on $G$. Therefore, $\tau^G_p\subset\tau$ and, since $[e, O]=O$, $O\in N_G (e)$, $\tau^G_p=\tau$. 

Evidently, $\tau^G_p\subset\tau^G_{co}$. For any compact set $K$ and open set $O$ in $G$, $K\subset O$, there exist nbds $Ox_i$ of points $x_i\in F$ and nbds $V_i\in N_G(e)$, $i=1,\ldots, n$, such that $K\subset\bigcup\limits_{i=1}^n Ox_i$, $\alpha (V_i, Ox_i)\subset O$, $i=1,\ldots, n$. Then for $V=\bigcap\limits_{i=1}^n V_i\in N_G(e)$ one has $\alpha (V, K)\subset O$. Therefore, $V\subset [K, O]$ and $\tau^G_{co}\subset\tau^G_p=\tau$. Finally, $\tau=\tau^G_{co}=\tau^G_p$. 

\medskip

(ii) $\tau=\tau^{b G}_{co}=\tau^{b G}_p$ if $X=b G$ ($\tau^{b G}_{co}$ and $\tau^{b G}_p$ are compact-open topology and topology of pointwise convergence for the action $\tilde\alpha$). Indeed, $\tau^{b G}_{co}$ is the least admissible group topology on $G$ and $\tau^{b G}_{co}\subset\tau$, since $\tau$ is an admissible group topology on $G$ for the extended action $\tilde\alpha$. By (i) $\tau=\tau_p^G$ and $[x, O]=[x, \tilde O]$ for $x\in G$, $O\in\tau$, $\tilde O$ is open in $b G$ such that $\tilde O\cap G=O$. Hence, $\tau\subset\tau^{b G}_p\subset\tau^{b G}_{co}$ and finally, $\tau=\tau^{b G}_{co}=\tau^{b G}_p$. 

\medskip

(iii) Let $G\subset X\subset b G$ ($\tau^X_{co}$ and $\tau^X_p$ are compact-open topology and topology of pointwise convergence for the action $\tilde\alpha_X$). $\tau^X_p\subset\tau^X_{co}$. For any open set $[K, O]$, where $K$ is a compact and $O$ is an open subset of $X$, from the subbase of $\tau$ by (i), take an open in $b G$ set $\tilde O$ such that $\tilde O\cap X=O$. Then $[K, \tilde O]=[K, O]$ and the set $[K, \tilde O]$ is from the subbase of compact-open topology $\tau_{co}^{b G}$. Therefore, $\tau^X_{co}\subset\tau$ by (ii). 

$\tau=\tau_p^G\subset\tau^X_p$, hence, $\tau\subset\tau^X_p\subset\tau^X_{co}\subset\tau$ and finally,  $\tau=\tau^X_{co}=\tau^X_p$.
\end{proof}


\subsection{Hyperspace}\label{hyper}

Designations are from~\cite{Beer}. $\CL (X)$ is the family of nonempty closed subsets of $X$, $2^X=\CL (X)\cup\{\emptyset\}$. For $E\subset X$ 
$$E^+=\{F\in\CL (X)\ |\ F\subset E\},\ E^-=\{F\in\CL (X)\ |\ F\cap E\ne\emptyset\}.$$

\begin{df}~\cite[Definition 2.2.4]{Beer} 
Let $X$ be a Hausdorff space. The Vietoris topology $\tau_V$ on $\CL (X)$ has as a subbase all sets of the form $W^-$  and all sets 
of the form $W^+$, where $W$ is open in $X$. 
\end{df}

The Vietoris topology is Tychonoff if and only if $X$ is normal. If $X$ is compact, then $(\CL (X), \tau_V)$ is compact.  $(\CL (X), \tau_V)$ is a clopen subset of  $(2^X, \tau_V)$ and $\{\emptyset\}$ is an isolated point of $(2^X, \tau_V)$. 

A base for the Vietoris topology $\tau_V$ on $\CL (X)$ consists of all sets of the form 
$$[V_1, V_2, \ldots, V_n]=\{F\in\CL (X)\ |\ \forall\ i\leq n\ (F\cap V_i\ne\emptyset),\ F\subset\bigcup\limits_{i=1}^n V_i\},\eqno{\rm (V)}$$ 
where $V_1, V_2, \ldots, V_n$ is a finite family of open nonempty subsets of $X$. We use notation $2^X=(2^X, \tau_V)$ when $X$ is a compact space.

\medskip

Every homeomorphism $\varphi: X\to X$, where $X$ is a compact space, induces the homeomorphism $H_{\varphi}: 2^X\to 2^X$, $H_{\varphi}(A)=\varphi (A)$. 

\begin{rem}
{\rm If $s: X\to X$ is a self-inverse map,  $X$ is a compact space, then the map $H_s: 2^X\to 2^X$, $H_s(A)=s (A)$, is a self-inverse map.}
\end{rem}

\begin{pro}[action on a hyperspace]\label{actionhyper}  {\rm(}See, for instance, \cite[Remark 4.4]{DJPLP}{\rm).}
If $(G, X, \theta)$ is a compact $G$-space, then the {\it induced action} $\theta^H: G\times 2^X\to 2^X$, $\theta^H(g, A)=\theta (g, A)=\{gx\ |\ x\in A\}$, is continuous and $(G, 2^X, \theta^H)$ is a $G$-space.
\end{pro}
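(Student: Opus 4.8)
The plan is to verify continuity of $\theta^H$ directly at an arbitrary pair $(g_0, A_0)\in G\times 2^X$ using the basic open sets $[V_1,\ldots,V_n]$ of the Vietoris topology. Fix such a basic neighborhood of $\theta^H(g_0,A_0)=g_0A_0$, so that $g_0A_0\cap V_i\ne\emptyset$ for each $i\le n$ and $g_0A_0\subset\bigcup_{i=1}^n V_i$. I must produce a neighborhood $O$ of $g_0$ in $G$ and a Vietoris neighborhood of $A_0$ whose image under $\theta^H$ lands inside $[V_1,\ldots,V_n]$.

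The key steps are as follows. First I would handle the ``lower'' conditions $F\cap V_i\ne\emptyset$: for each $i$ pick $x_i\in A_0$ with $g_0x_i\in V_i$; by continuity of $\theta$ at $(g_0,x_i)$ there are a neighborhood $O_i\in N_G(e)$ and an open $W_i\ni x_i$ in $X$ with $\theta(O_ig_0, W_i)\subset V_i$. Then any closed $F$ meeting every $W_i$ satisfies $gF\cap V_i\ne\emptyset$ for all $g\in O_ig_0$. Second, for the ``upper'' condition $gF\subset\bigcup V_i$, I would use compactness: the set $K=\{g_0\}\cdot A_0$ is compact and contained in the open set $U=\bigcup_{i=1}^n V_i$, and $\theta$ is continuous, so by a standard tube-type argument (cover $A_0$ by finitely many open sets $U_x$ with $\theta(P_x g_0, U_x)\subset U$ for suitable $P_x\in N_G(e)$) there exist an open $U_0\supset A_0$ in $X$ and $P\in N_G(e)$ with $\theta(Pg_0, U_0)\subset U$. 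Finally I would take $O=\big(\bigcap_{i=1}^n O_i\big)\cap P\cdot g_0$ (a neighborhood of $g_0$) and the Vietoris neighborhood $[\,U_0\cap W_1,\ U_0\cap W_2,\ \ldots,\ U_0\cap W_n,\ U_0\,]$ of $A_0$ — here I should first shrink so each $U_0\cap W_i$ is nonempty, which holds since $x_i\in A_0\subset U_0$ — and check that $\theta^H$ maps $O\times[\,U_0\cap W_1,\ldots,U_0\cap W_n,\ U_0\,]$ into $[V_1,\ldots,V_n]$: membership of each $gF$ in the requisite $V_i$ comes from the first step, and $gF\subset U$ from the second.

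It then remains to note that $\theta^H(g,A)=gA$ is indeed a nonempty closed (hence compact) subset of $X$ whenever $A\in 2^X\setminus\{\emptyset\}$, since $x\mapsto gx$ is a homeomorphism of the compact space $X$; and that $\theta^H(e,A)=A$ and $\theta^H(gh,A)=\theta^H(g,\theta^H(h,A))$ follow pointwise from the corresponding identities for $\theta$. Since $\{\emptyset\}$ is isolated in $2^X$ and fixed by every $\theta^H(g,\cdot)$, continuity there is automatic, so the argument above on $\CL(X)=2^X\setminus\{\emptyset\}$ suffices. Together these give that $(G,2^X,\theta^H)$ is a $G$-space.

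I expect the main obstacle to be the bookkeeping in the second step — producing a single pair $(P,U_0)$ that simultaneously controls $\theta(Pg_0,U_0)\subset\bigcup V_i$ — since it requires combining the compactness of $A_0$ with joint continuity of $\theta$ in both variables, rather than fixing $g=g_0$; once the neighborhoods $O_i$, $W_i$, $P$, $U_0$ are in hand, verifying that the product neighborhood is carried into $[V_1,\ldots,V_n]$ is routine. (This is why the statement cites \cite[Remark 4.4]{DJPLP} for the details.)
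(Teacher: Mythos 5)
Your proof is correct. Note that the paper itself gives no proof of this proposition --- it is stated with only a citation to \cite[Remark 4.4]{DJPLP} --- so there is no in-paper argument to compare against; your direct verification on the Vietoris basis is exactly the standard argument one would expect that reference to contain. The two halves are handled properly: the ``lower'' conditions $gF\cap V_i\ne\emptyset$ via joint continuity of $\theta$ at the finitely many points $(g_0,x_i)$, and the ``upper'' condition $gF\subset\bigcup_i V_i$ via the tube-lemma argument over the compact set $A_0$, after which the product neighborhood $O\times[\,U_0\cap W_1,\ldots,U_0\cap W_n,\ U_0\,]$ does land in $[V_1,\ldots,V_n]$. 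The remaining checks (that $gA$ is closed and nonempty because $\theta^g$ is a homeomorphism of the compact space $X$, that the action identities pass to $\theta^H$ pointwise, and that continuity at $\{\emptyset\}$ is automatic since it is an isolated fixed point) are all as they should be. Only a cosmetic remark: the expression $O=\big(\bigcap_{i=1}^n O_i\big)\cap P\cdot g_0$ should be parsed as $\big(\big(\bigcap_{i=1}^n O_i\big)\cap P\big)g_0$; writing it that way would avoid any ambiguity about whether the translate applies to the whole intersection.
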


\begin{pro}\label{hyperinversemap} 
If $(G, X, \theta)$ is a compact $G$-space with self-inverse map $s$, then $(\theta^H)^*=(\theta^*)^H$.
\end{pro}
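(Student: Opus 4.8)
The plan is to unwind both sides into explicit set-valued expressions and observe that they agree. First I would check that both sides are legitimate objects. By Lemma~\ref{invaction} the triple $(G, X, \theta^*)$ is again a $G$-space, so Proposition~\ref{actionhyper} applies and the induced action $(\theta^*)^H$ on $2^X$ is defined by $(\theta^*)^H(g, A) = \{\theta^*(g, x)\ |\ x \in A\}$. On the other hand, $H_s \colon 2^X \to 2^X$ is a self-inverse map of the compact space $2^X$ (recalled just before Proposition~\ref{actionhyper}), and $(G, 2^X, \theta^H)$ is a $G$-space by Proposition~\ref{actionhyper}; hence Lemma~\ref{invaction} applied to this $G$-space with the self-inverse map $H_s$ produces the inverse action $(\theta^H)^*(g, A) = H_s\bigl(\theta^H(g, H_s(A))\bigr)$.

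Next I would compute directly. Fix $g \in G$ and $A \in 2^X$. Using the definitions of $H_s$ and $\theta^H$,
$$(\theta^H)^*(g, A) = H_s\bigl(\theta^H(g, H_s(A))\bigr) = s\bigl(\theta(g, s(A))\bigr).$$
Since $s \circ s = \id$, the map $s$ is a bijection of $X$, and likewise $x \mapsto \theta(g, x)$ is a bijection of $X$; therefore forming images commutes with listing elements, and $s(\theta(g, s(A))) = \{\, s(\theta(g, s(x)))\ |\ x \in A\,\}$. By the defining formula for $\theta^*$ in Lemma~\ref{invaction}, $s(\theta(g, s(x))) = \theta^*(g, x)$, so the right-hand side equals $\{\,\theta^*(g, x)\ |\ x \in A\,\} = (\theta^*)^H(g, A)$. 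The degenerate case $A = \emptyset$ is immediate since $s(\emptyset) = \emptyset$ and $\theta(g, \emptyset) = \emptyset$.

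I do not expect a genuine obstacle here: the statement is a bookkeeping identity among set-valued maps, and the only point requiring care is that $s$ and each homeomorphism $x \mapsto \theta(g, x)$ are bijections, so that the image of a set is exactly the set of images and the composition of images is the image of the composition — which is precisely the expression defining $\theta^*$. If one wishes to be thorough, one also notes that these are homeomorphisms of the compact (hence normal) space $X$, so images of closed sets are closed and the whole computation stays inside $\CL(X) \subset 2^X$.
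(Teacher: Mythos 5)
Your proposal is correct and follows essentially the same route as the paper: both unwind $(\theta^H)^*(g,A)=H_s(\theta^H(g,H_s(A)))$ elementwise into $\{s(\theta(g,s(x)))\mid x\in A\}=\{\theta^*(g,x)\mid x\in A\}=(\theta^*)^H(g,A)$. Your added remarks on well-definedness, the empty set, and closedness of images are harmless extra care but not needed beyond the paper's direct computation.
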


\begin{proof}
$$(\theta^H)^*(g, A)=H_s(\theta^H (g, s(A)))=H_s(\{\theta (g, s(x))\ |\ x\in A\})=\{s(\theta (g, s(x)))\ |\ x\in A\}=$$
$$=\{\theta^* (g, x)\ |\ x\in A\}=(\theta^*)^H (g, A),\ A\in 2^X,\ g\in G.$$
\end{proof}

\begin{pro}[map of hyperspaces]\label{maphyperF} 
Let $X$, $Y$ be compact spaces and $\varphi: Y\to X$ is an onto map. 

{\rm (A)} The map $H_{\varphi}: 2^X\to 2^Y$ is onto and perfect~\cite[Ch.\,3, \S\ 3.12, Problem 3.12.27 (e)]{Engelking}. 

\medskip

{\rm (B)} If $(G, X, \theta_X)$, $(G, Y, \theta_Y)$ are compact $G$-spaces and $\varphi$ is an onto $G$-map, then $H_{\varphi}$ is a $G$-map of compact $G$-spaces $(G, 2^X, \theta_X^H)$ and $(G, 2^Y, \theta_Y^H)$ . 

\medskip

{\rm (C)} If $\varphi$ commutes with self-inverse maps $s_X$ and $s_Y$ on $X$ and $Y$ respectively, then $H_{\varphi}$ commutes with self-inverse maps $H_{s_X}$ and $H_{s_Y}$. 

\medskip

{\rm (D)} If $G$-map $\varphi$ commutes with self-inverse maps $s_X$ and $s_Y$ on compact $G$-spaces  $(G, X, \theta_X)$, $(G, Y, \theta_Y)$ respectively, then $H_{\varphi}$ is a $G$-map of $G$-spaces $(G, 2^X,  (\theta_X^H)^*)$ and $(G, 2^Y, (\theta_X^H)^*)$. 
\end{pro}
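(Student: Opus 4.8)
The plan is to establish (A)--(D) in turn, each reducing either to a verification on the Vietoris subbase or to a one-line substitution into the defining formulas; I anticipate no genuine obstacle, only the need to use the compactness of $Y$ (so that $\varphi(B)$ is closed whenever $B$ is closed) and the surjectivity of $\varphi$ where (A) requires them, and, in (D), to keep straight the order of composition and the translation between $(\theta^H)^*$ and $(\theta^*)^H$. Throughout, write $H_\varphi$ for the hyperspace map induced by $\varphi$, sending a closed set to its $\varphi$-image, so $H_\varphi\colon 2^Y\to 2^X$ with $H_\varphi(B)=\varphi(B)$ for $B\in\CL(Y)$ and $H_\varphi(\emptyset)=\emptyset$.

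For (A): since $Y$ is compact, every $B\in\CL(Y)$ is compact, so $\varphi(B)$ is compact and hence closed in the Hausdorff space $X$; thus $H_\varphi$ is well defined, maps $\CL(Y)$ into $\CL(X)$, and fixes $\emptyset$. It is onto, because for $A\in\CL(X)$ the set $\varphi^{-1}(A)$ is closed and, by surjectivity of $\varphi$, nonempty with $\varphi(\varphi^{-1}(A))=A$, while $\emptyset$ is the image of $\emptyset$. For continuity it suffices to compute preimages of subbasic open sets: for $W$ open in $X$,
\[
H_\varphi^{-1}(W^+)=\{B\in\CL(Y)\mid\varphi(B)\subset W\}=\{B\mid B\subset\varphi^{-1}(W)\}=(\varphi^{-1}(W))^+,
\]
\[
H_\varphi^{-1}(W^-)=\{B\mid\varphi(B)\cap W\ne\emptyset\}=\{B\mid B\cap\varphi^{-1}(W)\ne\emptyset\}=(\varphi^{-1}(W))^-,
\]
and $\varphi^{-1}(W)$ is open in $Y$, so both preimages are open; together with $H_\varphi^{-1}(\{\emptyset\})=\{\emptyset\}$ and the fact that $\CL(Y)$ is clopen in $2^Y$, this gives continuity. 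Finally a continuous map between compact Hausdorff spaces is closed with compact fibres, hence perfect (cf.\ \cite[Problem~3.12.27\,(e)]{Engelking}).

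For (B) and (C) one substitutes into $\theta^H(g,A)=\{gx\mid x\in A\}$ and $H_s(A)=s(A)$. If $\varphi$ is a $G$-map, so that $\varphi(gy)=g\varphi(y)$, then for $g\in G$ and $B\in 2^Y$
\[
H_\varphi(\theta_Y^H(g,B))=\varphi(\{gy\mid y\in B\})=\{g\varphi(y)\mid y\in B\}=\theta_X^H(g,\varphi(B))=\theta_X^H(g,H_\varphi(B)),
\]
so, using (A) for continuity and Proposition~\ref{actionhyper} for the fact that $2^X,2^Y$ are compact $G$-spaces, $H_\varphi$ is a $G$-map of $(G,2^Y,\theta_Y^H)$ and $(G,2^X,\theta_X^H)$. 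Likewise, if $\varphi\circ s_Y=s_X\circ\varphi$ then for $B\in 2^Y$
\[
H_\varphi(H_{s_Y}(B))=\varphi(s_Y(B))=\{s_X(\varphi(y))\mid y\in B\}=s_X(\varphi(B))=H_{s_X}(H_\varphi(B)),
\]
i.e.\ $H_\varphi$ commutes with the self-inverse maps $H_{s_Y}$ and $H_{s_X}$.

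For (D): by (B) the continuous map $H_\varphi$ is a $G$-map of $(G,2^Y,\theta_Y^H)$ and $(G,2^X,\theta_X^H)$, and by (C) it commutes with the self-inverse maps $H_{s_Y}$, $H_{s_X}$; Lemma~\ref{ginversg} then applies to $H_\varphi$ and yields that $H_\varphi$ is a $G$-map of $(G,2^Y,(\theta_Y^H)^*)$ and $(G,2^X,(\theta_X^H)^*)$. Equivalently, one may first apply Lemma~\ref{ginversg} to $\varphi$ itself to get that $\varphi$ is a $G$-map for $\theta_X^*$ and $\theta_Y^*$, then invoke (B) for these actions together with Proposition~\ref{hyperinversemap}, which identifies $(\theta^H)^*$ with $(\theta^*)^H$; the conclusion is the same. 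The only point requiring care is precisely this passage between $(\theta^H)^*$ and $(\theta^*)^H$ (and the direction of the arrows when quoting Lemma~\ref{ginversg}); everything else is a formal substitution.
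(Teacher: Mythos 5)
Your proof is correct and follows essentially the same route as the paper: (B) and (C) are the same one-line substitutions, and (D) is deduced from (B), (C), Lemma~\ref{ginversg} and Proposition~\ref{hyperinversemap} exactly as in the text; the only additions are that you prove (A) in full (subbase preimage computation plus compactness) where the paper merely cites Engelking, and that you silently normalize the direction of $H_\varphi$ to $2^Y\to 2^X$, which is the reading consistent with the definition $H_\varphi(B)=\varphi(B)$ (the statement's ``$H_\varphi\colon 2^X\to 2^Y$'' with $\varphi\colon Y\to X$ is a slip that the paper's own proof also quietly corrects by working with $A\in 2^X$ and $\varphi$ defined on $X$).
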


\begin{proof} 
(B) $H_{\varphi} (\theta^H_X (g, A))=H_{\varphi}(\{\theta_X (g, x)\ |\ x\in A\})=\{\varphi(\theta_X (g, x))\ |\ x\in A\}=\{\theta_Y (g, \varphi (x))\ |\ x\in A\}=\{\theta_Y (g, y)\ |\ y\in \varphi (A)\}=\theta^H_Y (g, H_{\varphi}(A)),\ g\in G,\ A\in 2^X$.   

\medskip

(C) Let $\varphi\circ s_X=s_Y\circ\varphi$. Then 
$$H_{\varphi} (H_{s_X} (A))=H_{\varphi} (\{s_X(x)\ |\ x\in A\})=\{\varphi(s_X(x))\ |\ x\in A\}=\{s_Y(\varphi(x))\ |\ x\in A\}=$$
$$=H_{s_Y}(\{y\ |\ y\in \varphi (A)\})=H_{s_Y}(H_{\varphi}(A)),\ A\in 2^X.$$

(D) follows from (A) -- (C), Proposition~\ref{hyperinversemap} and Lemma~\ref{ginversg}.
\end{proof}

\medskip

{\it Uniformity on a hyperspace.} Let $(X, \mathcal U_X)$ be a uniform space. The family of sets
$$(\emptyset, \emptyset)\cup\{(A, B)\in\CL (X)\times\CL (X)\ |\ B\subset\st (A, u),\ \&\ A\subset\st (B, u)\},\ u\in\mathcal U_X,$$
is a base of uniformity $2^{\mathcal U_X}$ on $2^X$~\cite[Problem 8.5.16]{Engelking} or~\cite[Ch.\ 2,\ \S\ Hyperspace]{Isbell}.


\section{Compactifications of topological groups}\label{class}

Below $(G, G, \alpha)$ is a $G$-space where a topological group $G$ acts on itself by multiplication on the left. If $b G$ is $G$-compactification of $G$, then the extended action is $\tilde\alpha$, $(G, b G, \tilde\alpha)$ is a compact $G$-space. $\alpha^*$ is the inverse action to $\alpha$. $\tilde{\mathcal U}$ is the uniformity on $b G$, $\mathcal U$ is the subspace uniformity (equiuniformity) on $G$.

\subsection{$G$-compactifications of topological groups (extensions of actions and involution)}
\begin{df}
Let $G$ be a topological group. 
\begin{itemize}
\item A $G$-compactification $b G$ of $G$ is a $G$-compactification of $G$-space $(G, G, \alpha)$. 
\item A $G^{\leftrightarrow}$-compactification $b G$  of $G$ is a $G$-compactification of $G$-spaces $(G, G, \alpha)$ and $(G, G, \alpha^*)$. 
\item A $G^*$-compactification $b G$ of $G$ is a $G$-compactification of $G$ with a self-inverse map $s$ such that $s|_{G}=*$ {\rm(}the embedding of $G$ into $b G$ commutes with involution $*$ on $G$ and self-inverse map $s$ on $b G${\rm)}. 
\end{itemize}
\end{df}

\begin{pro}\label{compinv}
Let $G$ be a topological group. The following implications are valid for compactification $b G$ of $G$
$$G^*\mbox{-compactification}\longrightarrow G^{\leftrightarrow}\mbox{-compactification}\longrightarrow G\mbox{-compactification}.$$
\end{pro}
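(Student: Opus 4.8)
The plan is to establish the two implications separately, reading off what extra structure each stronger notion provides and checking it degenerates correctly.

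First I would prove that a $G^{\leftrightarrow}$-compactification is a $G$-compactification. This is essentially immediate from the definitions: a $G^{\leftrightarrow}$-compactification $bG$ is, by definition, a $G$-compactification of the $G$-space $(G,G,\alpha)$ \emph{and} of $(G,G,\alpha^*)$, so in particular it is a $G$-compactification of $(G,G,\alpha)$, which is exactly what ``$G$-compactification'' means. There is nothing to verify beyond unwinding the definition.

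Next I would prove that a $G^*$-compactification is a $G^{\leftrightarrow}$-compactification. Let $bG$ be a $G$-compactification of $G$ carrying a self-inverse map $s$ with $s|_G = {*}$, and let $\tilde\alpha$ be the extended continuous action on $bG$ with equiuniformity $\tilde{\mathcal U}$. By Lemma~\ref{invaction} applied to the compact $G$-space $(G, bG, \tilde\alpha)$ with the self-inverse map $s$, the inverse action $(\tilde\alpha)^*(g,y) = s(\tilde\alpha(g, s(y)))$ is a well-defined continuous action on $bG$. The point is that $(\tilde\alpha)^*$ restricts to $\alpha^*$ on $G$: for $g,h\in G$ we have $(\tilde\alpha)^*(g,h) = s(\tilde\alpha(g,s(h))) = {*}(\alpha(g,{*}(h))) = \alpha^*(g,h)$, using $s|_G = {*}$. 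So $(G, bG, (\tilde\alpha)^*)$ is a compact $G$-space extending $(G,G,\alpha^*)$; invoking Lemma~\ref{exact} (with $X=G$, $Y=bG$) confirms that $bG$ is a $G$-extension, hence a $G$-compactification, of $(G,G,\alpha^*)$. Combined with the hypothesis that it is a $G$-compactification of $(G,G,\alpha)$, this makes $bG$ a $G^{\leftrightarrow}$-compactification.

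The only subtlety worth flagging is the equiuniformity bookkeeping: one must be sure that the uniformity witnessing that $bG$ is a $G$-compactification for $\alpha$ also witnesses it for $\alpha^*$, or else produce a suitable one. But this is handled by Lemma~\ref{invunif}: if $\tilde{\mathcal U}$ is an equiuniformity for $\tilde\alpha$ on the compact space $bG$, then $\tilde{\mathcal U}^* = \{u^* : u\in\tilde{\mathcal U}\}$ is an equiuniformity for $(\tilde\alpha)^* = \tilde\alpha^*$; since $bG$ is compact there is a unique compatible uniformity, so $\tilde{\mathcal U} = \tilde{\mathcal U}^*$ automatically, and no extra work is needed. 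Thus the main ``obstacle'' is really just the routine check that $(\tilde\alpha)^*|_{G\times G} = \alpha^*$, which follows directly from $s|_G = {*}$ and the definition of $\alpha^*$ via Lemma~\ref{invaction}. No step is genuinely hard; the content is organizational, assembling Lemmas~\ref{invaction}, \ref{exact}, and \ref{invunif}.
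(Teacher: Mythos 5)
Your proof is correct and follows essentially the same route as the paper: both treat the implication $G^{\leftrightarrow}\to G$ as definitional and, for $G^{*}\to G^{\leftrightarrow}$, apply Lemma~\ref{invaction} to the compact $G$-space $(G,bG,\tilde\alpha)$ with the self-inverse map $s$ to obtain $\tilde\alpha^{*}$, then check that it restricts to $\alpha^{*}$ on $G$. The only cosmetic difference is that the paper packages this last verification as an appeal to Lemma~\ref{ginversg} applied to the embedding $b$, whereas you carry out the one-line computation directly and cite Lemma~\ref{exact}; your extra remark on the uniqueness of the uniformity on the compact space $bG$ is sound and consistent with what the paper leaves implicit.
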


\begin{proof} 
Only the first implication is not evident. 

If $(b G, b)$ is a $G^*$-compactification of $G$, then the extension $\tilde\alpha$ of the action $\alpha$ is defined. Let $s$ be a self-inverse map on $b G$ such that $s|_{G}=*$ (equivalently, $b\circ *=s\circ b$).  By Lemma~\ref{invaction}  the inverse action 
$$\tilde\alpha^* (g, x)=s(\tilde\alpha (g, s(x))),\ g\in G,\ x\in b G,$$
on $b G$ is defined. By Lemma~\ref{ginversg} $b G$ is a $G$-compactification of $(G, G, \alpha^*)$. Hence, $b G$ is a $G^{\leftrightarrow}$-compactification of $G$. 
\end{proof}

$\mathbb{G} (G)$, respectively $\mathbb{G}^{\leftrightarrow} (G)$ and $\mathbb{G}^* (G)$ are posets of $G$-compactifications, respectively $G^{\leftrightarrow}$-compactifications and $G^*$-compactifications of $G$.

\begin{pro}\label{coinunif}
Let $b G\in\mathbb{G}^{\leftrightarrow}(G)$, $\mathcal U$ is the subspace uniformity on $G$. Then 
$$\mathcal U^{-1}=\{u^{-1}=\{U^{-1}\ |\ U\in u\}\ |\ u\in\mathcal U\},$$
is a totally bounded equiuniformity on $G$, $b' G\in\mathbb{G}^{\leftrightarrow}$, where $b' G$ is the completion of $(G, \mathcal U^{-1})$. 

$b^* G\in\mathbb{G}^*(G)$, where $b^* G$ is the completion of $(G, \mathcal U\vee\mathcal U^{-1})$. 
\end{pro}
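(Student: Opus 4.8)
The plan is to recognize $\mathcal{U}^{-1}$ as the uniformity $\mathcal{U}^*$ of Lemma~\ref{invunif} associated with the self-inverse map $s=*\colon G\to G$ (the involution), and to invoke that lemma twice. First I would record that, since $bG\in\mathbb{G}^{\leftrightarrow}(G)$, the subspace uniformity $\mathcal{U}$ on $G$ (the restriction of the unique uniformity $\tilde{\mathcal U}$ of the compact space $bG$, which is an equiuniformity for both extended actions $\tilde\alpha$ and $\tilde\alpha^*$) is one and the same totally bounded uniformity on $G$ that is an equiuniformity for \emph{both} $\alpha$ and $\alpha^*$. Since $*$ is a homeomorphism of $G$, the family $\mathcal{U}^{-1}=\{u^{-1}\mid u\in\mathcal U\}$ is a uniformity on $G$ (as in the proof of Lemma~\ref{invunif}), and it is totally bounded because $*$ carries a finite subcover of $u$ to a finite subcover of $u^{-1}$.

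Next, recall from \S2.2 that $\alpha^*(g,h)=*(\alpha(g,*(h)))$, i.e. $\alpha^*$ is exactly the inverse action of $\alpha$ with respect to $s=*$, and $(\alpha^*)^*=\alpha$ by Lemma~\ref{invaction}. Applying Lemma~\ref{invunif} with $\theta=\alpha$, $s=*$ (using that $\mathcal U$ is an equiuniformity for $\alpha$) gives that $\mathcal U^*=\mathcal U^{-1}$ is an equiuniformity for $\alpha^*$; applying Lemma~\ref{invunif} again with $\theta=\alpha^*$, $s=*$, $(\alpha^*)^*=\alpha$ (using that $\mathcal U$ is an equiuniformity for $\alpha^*$) gives that $\mathcal U^{-1}$ is an equiuniformity for $\alpha$. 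Thus $\mathcal U^{-1}$ is a totally bounded equiuniformity for both $\alpha$ and $\alpha^*$, so its completion $b'G$ is a $G$-compactification of $(G,G,\alpha)$ and of $(G,G,\alpha^*)$; that is, $b'G\in\mathbb{G}^{\leftrightarrow}(G)$.

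For $b^*G$ I would first check that $\mathcal U\vee\mathcal U^{-1}$ is a totally bounded equiuniformity for $\alpha$: total boundedness is immediate from finite subcovers of common refinements; saturatedness holds because each left translation is uniformly continuous into $\mathcal U$ and into $\mathcal U^{-1}$, hence into the join; and boundedness follows, for a basic cover $u\wedge u'$ with $u\in\mathcal U$, $u'\in\mathcal U^{-1}$, by choosing witnesses $O_1,v_1$ for $u$ and $O_2,v_2$ for $u'$ and taking $O=O_1\cap O_2\in N_G(e)$, $v=v_1\wedge v_2$, so that $Ov\succ u\wedge u'$ (and then appealing to Remark~\ref{contact}). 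Hence $b^*G$, the completion of $(G,\mathcal U\vee\mathcal U^{-1})$, is a $G$-compactification of $G$. Finally, $*\colon G\to G$ is a uniform isomorphism of $(G,\mathcal U)$ onto $(G,\mathcal U^{-1})$, hence a uniform automorphism of $(G,\mathcal U\vee\mathcal U^{-1})$, so it extends to a continuous map $s\colon b^*G\to b^*G$; since $*\circ*=\id$ on the dense subset $G$ and $s\circ s$, $\id$ are continuous, $s\circ s=\id$, so $s$ is a self-inverse map with $s|_G=*$, and therefore $b^*G\in\mathbb{G}^*(G)$.

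The main obstacle is essentially bookkeeping: making sure that the subspace uniformity of a $G^{\leftrightarrow}$-compactification is a single uniformity $\mathcal U$ that is simultaneously an equiuniformity for $\alpha$ and for $\alpha^*$, and then feeding this into Lemma~\ref{invunif} in both directions to transfer the equiuniformity property to $\mathcal U^{-1}$ for both actions. The remaining points — total boundedness of $\mathcal U^{-1}$ and of $\mathcal U\vee\mathcal U^{-1}$, the equiuniformity of the join for $\alpha$, and the self-inverse property of the extension $s$ — are routine verifications.
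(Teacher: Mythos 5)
Your proposal is correct and follows essentially the same route as the paper: the first two claims are obtained from Lemma~\ref{invunif} (with $s=*$, so that $\mathcal U^*=\mathcal U^{-1}$) together with total boundedness of $\mathcal U$, and the last claim from the facts that $\mathcal U\vee\mathcal U^{-1}$ is a totally bounded equiuniformity and that involution is uniformly continuous (indeed a uniform automorphism) with respect to it. You merely spell out the details that the paper leaves implicit.
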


\begin{proof}
The first two statements follow from Lemma~\ref{invunif} and total boundedness of $\mathcal U$. 

$\mathcal U\vee\mathcal U^{-1}$ is the least totally bounded uniformity on $G$ greater than or equal to $\mathcal U$ and $\mathcal U^{-1}$ and involution is uniformly continuous with respect to $\mathcal U\vee\mathcal U^{-1}$. Hence, $b^* G\in\mathbb{G}^*$. 
\end{proof}

\begin{pro}\label{blrlck}
$b_r G$ is the maximal element of $\mathbb{G}^{\leftrightarrow} (G)$ and $\mathbb{G}^* (G)$.
\end{pro}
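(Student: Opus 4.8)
The plan is to show that every $G^{\leftrightarrow}$-compactification (and in particular every $G^*$-compactification, using Proposition~\ref{compinv}) is dominated by $b_r G$ in the poset of compactifications, and then to verify that $b_r G$ itself lies in both posets. The second point is the easier one: the identity map $G\to G$ is a (non-continuous) $G$-map for both $\alpha$ and $\alpha^*$, and by the description of the Roelcke uniformity recalled in \S\,2.2, the covers $\{OgU\ |\ g\in G\}$ ($O,U\in N_G(e)$) form a base of $L\wedge R$ which is visibly saturated and bounded for $\alpha$ as well as for $\alpha^*$ (involution sends $L$ to $R$ and vice versa, so it fixes $L\wedge R$, and $(L\wedge R)_{fin}$ inherits this). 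Hence $(L\wedge R)_{fin}$ is a totally bounded equiuniformity on $G$ for both actions, its completion $b_r G$ is a $G^{\leftrightarrow}$-compactification, and since involution is uniformly continuous with respect to $L\wedge R$ (so also with respect to its precompact replica), $b_r G$ carries a self-inverse extension $s$ of $*$, i.e. $b_r G\in\mathbb{G}^*(G)$.

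For the domination, let $b G\in\mathbb{G}^{\leftrightarrow}(G)$ with subspace uniformity $\mathcal U=\tilde{\mathcal U}\wedge G$, a totally bounded equiuniformity on $G$ for both $\alpha$ and $\alpha^*$. Apply Proposition~\ref{unifcontRoelcke} with $X=G$ carrying the two actions $\theta=\alpha$, $\theta'=\alpha^*$ and the uniformity $\mathcal U$, taking $f=\id_G$: it is trivially a (non-continuous) $G$-map for the pair $(\alpha,\alpha)$ and for the pair $(\alpha^*,\alpha^*)$, so the proposition gives that $\id\colon (G,L\wedge R)\to (G,\mathcal U)$ is uniformly continuous. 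A uniformly continuous map between totally bounded uniform spaces extends to a continuous map of their Samuel compactifications; since $\mathcal U$ is totally bounded, $\mathcal U$ is coarser than $(L\wedge R)_{fin}$, so the identity extends to a map of compactifications $b_r G\to bG$. This is precisely the statement $b_r G\geq bG$ in the poset, so $b_r G$ is the largest element of $\mathbb{G}^{\leftrightarrow}(G)$, and by Proposition~\ref{compinv} also the largest element of $\mathbb{G}^*(G)$.

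The main obstacle I anticipate is making the application of Proposition~\ref{unifcontRoelcke} airtight: that proposition is stated for $f\colon G\to X$ with $X$ a (possibly different) $G$-Tychonoff space, and here one wants $X=G$ with $G$ equipped with the \emph{totally bounded} equiuniformity $\mathcal U$ rather than the original (left or two-sided) uniformity, so one should check that $(G,\mathcal U)$ with actions $\alpha,\alpha^*$ genuinely satisfies the hypotheses (it does, because $bG\in\mathbb{G}^{\leftrightarrow}(G)$ means exactly that $\mathcal U$ is an equiuniformity for both actions, and Remark~\ref{contact} then guarantees continuity of the actions on $(G,\mathcal U)$). The remaining point — that uniform continuity of $\id\colon (G,L\wedge R)\to (G,\mathcal U)$ forces $(L\wedge R)_{fin}\supseteq\mathcal U$ and hence a map of Samuel compactifications — is the standard functoriality of the Samuel/precompact-replica construction and needs only to be cited.
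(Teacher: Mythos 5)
Your proposal is correct and follows essentially the same route as the paper: membership of $b_r G$ in $\mathbb{G}^*(G)\subset\mathbb{G}^{\leftrightarrow}(G)$ via the fact that $L\wedge R$ (hence $(L\wedge R)_{fin}$) is an equiuniformity fixed by involution, and domination via Proposition~\ref{unifcontRoelcke} followed by passage to the precompact replica and extension to completions. The only cosmetic difference is that you apply Proposition~\ref{unifcontRoelcke} to $\id\colon (G,L\wedge R)\to (G,\mathcal U)$ with the subspace uniformity, whereas the paper applies it directly to the embedding $b\colon (G,L\wedge R)\to (bG,\tilde{\mathcal U})$; these are equivalent.
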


\begin{proof}
$b_r G\in \mathbb{G}^* (G)\subset\mathbb{G}^{\leftrightarrow} (G)$. Indeed, $L\wedge R$ is an equiuniformity on a $G$-space $(G, G, \alpha)$, and involution is a uniform isomorphism of $(G, L\wedge R)$. Hence, the precompact replica $(L\wedge R)_{fin}$ of $L\wedge R$ is an equiuniformity on $G$, and involution is a uniform isomorphism of $(G, (L\wedge R)_{fin})$.

Let $(b G, b)\in\mathbb{G}^{\leftrightarrow} (G)$, $\tilde{\mathcal U}$ is the unique uniformity on $b G$. $\tilde{\mathcal U}$ is an equiuniformity for the extended actions $\tilde\alpha$ and ${\tilde\alpha}^*$ and $b$ is a $G$-map of $G$-spaces $(G, G, \alpha)$, $(G, b G, \tilde\alpha)$ and  $(G, G, \alpha^*)$, $(G, b G, {\tilde\alpha}^*)$. 

By Proposition~\ref{unifcontRoelcke} the $G$-map  $b: (G, L\wedge R)\to (b G, \tilde{\mathcal U})$ is uniformly continuous. Since $\tilde{\mathcal U}$ is totally bounded, the map $b: (G, (L\wedge R)_r)\to (b(G), \tilde{\mathcal U})$ is uniformly continuous. Hence, the extension $\tilde b: b_r G\to b G$ of $b$ is defined and $\tilde b$ is a map of compactifications (see~\cite[Lemma 3.5.6]{Engelking}). 
\end{proof}

\begin{rem}\label{examp36}
{\rm For SIN-groups ($L=R$~\cite[Proposition 2.17]{RD}) $G$-compactifications of a $G$-space $(G, G, \alpha)$ are $G$-compactifications of $G$-space $(G, G, \alpha^*)$. 

If $G$ is a SIN group, then $\mathbb{G}^{\leftrightarrow} (G)=\mathbb{G} (G)$, $\forall\ b G\in \mathbb{G} (G)$ $b G\leq b_r G$.}
\end{rem}

\begin{ex}\label{examp36}
{\rm $(\mathbb Z, +)$ is a discrete SIN-group and $\beta\mathbb Z=-\mathbb N^*\cup\mathbb Z\cup\mathbb N^*$, $\mathbb N^*=\beta\mathbb N\setminus\mathbb N$, $-\mathbb N^*=\{-p\ |\ p\in\mathbb N^*\}$, is the maximal $G$-compactification of $\mathbb Z$. $-\mathbb N^*$ is a compact invariant subset for the extended action. Therefore, $b\mathbb Z=\{-\infty\}\cup\mathbb Z\cup \mathbb N^*\in\mathbb G^{\leftrightarrow}(\mathbb Z)$. However, involution is not extended to $b\mathbb Z$. Therefore, $b\mathbb Z\in\mathbb{G}^{\leftrightarrow} (G)\setminus\mathbb G^*(\mathbb Z)$.}
\end{ex}


\subsection{Semigroup compactifications of topological groups, self-inverse maps and involutions}\label{seminv}

\begin{df} {\rm (see~\cite{CliffPrest}, \cite{Lawson})}
Let $(S, \bullet)$ be a semigroup {\rm(}multiplication $\bullet$ is a binary operation on $S${\rm)}. 

Monoid $M$ is a semigroup with identity $e$: $\forall\ x\in M\ (e\bullet x=x\bullet e=x)$.

\medskip

An involution on $S$ is a self-inverse map $s$ on $S$ such that $s(x\bullet y)=s(y)\bullet s(x)$, $\forall\ x, y\in S$.

A semigroup $S$ {\rm(}monoid $M${\rm)} with involution $s$ is called a semigroup {\rm(}monoid{\rm)} with involution.
\end{df}

\begin{df} {\rm(see~\cite{ArhTk})} 
A topological space $S$ and a semigroup $(S, \bullet)$ {\rm(}respectively monoid $M${\rm)} is 
\begin{itemize}
\item a right {\rm(}left{\rm)} topological semigroup {\rm(}respectively monoid $M${\rm)}, if $\forall\ s\in S$ the map $S\to S$   {\rm(}respectively $\forall\ s\in M$ the map $M\to M${\rm)},  $x\to x\bullet s$ {\rm(}$x\to s\bullet x${\rm)}, is continuous. 
\item a semitopological semigroup {\rm(}respectively monoid $M${\rm)}, if $S$  {\rm(}respectively $M${\rm)} is a right and left topological semigroup {\rm(}respectively monoid{\rm)}. 
\item a topological semigroup {\rm(}respectively monoid $M${\rm)}, if the map $S\times S\to S$ {\rm(}respectively $M\times M\to M${\rm)}, $(x, y)\to x\bullet y$, is continuous.  
\end{itemize}

If, additionally, $S$ is a semigroup {\rm(}respectively monoid $M${\rm)} with {\rm(}continous{\rm)} involution, then a semitopological / topological semigroup {\rm(}respectively monoid{\rm)} is called a  semitopological / topological semigroup  with involution {\rm(}respectively monoid  with involution{\rm)}.
\end{df}

\begin{rem}\label{seminvtop}
{\rm Let a space $X$ be a semigroup {\rm(}respectively monoid{\rm)} $(X, \bullet)$ with {\rm(}continuous{\rm)} involution $s$. Then the map  $\bullet_y: X\to X$, $\bullet_y(x)=x\bullet y$ is continuous iff the map $\bullet^{s(y)}: X\to X$, $\bullet^{s(y)}(x)=s(y)\bullet x$  is continuous.

Indeed, since $s(\bullet^{s(y)}(x))=s(s(y)\bullet x)=s(x)\bullet y=\bullet_y(s(x))$ and the involution $s$ is continuous, the map $\bullet_y: X\to X$ is continuous iff the map $\bullet^{s(y)}: X\to X$ is continuous. 

\medskip

If $X$ is a right {\rm(}left{\rm)} topological semigroup {\rm(}respectively monoid{\rm)} with {\rm(}continuous{\rm)} involution, then $X$ is a semitopological semigroup {\rm(}respectively monoid{\rm)}  with involution.}
\end{rem}

\begin{df}
Let $G$ be a topological group. 
\begin{itemize}
\item A left topological semigroup compactification $b G$ of $G$ {\rm(}abbreviation lts-compactification{\rm)} is 
a $G$-compactification of $G$ such that $(bG, \bullet)$ is a left topological semigroup and  $g\bullet h=gh$, $g, h\in G$ {\rm(}an embdding of $G$ into $b G$ is a homomorphism of semigroup $G$ into $b G${\rm)}.    
\item An Ellis compactification $b G$ of $G$ is a $G$-compactification of $G$ such that $(bG, \bullet)$ is a right topological monoid and $\bullet|_{G\times b G}=\tilde\alpha$~{\rm(}see {\rm\cite{KozlovSorin})}. 
\item A semitopological semigroup compactification $b G$ of $G$ {\rm(}abbreviation sm-compactification{\rm)} is a $G$-compactification of $G$ such that $(bG, \bullet)$ is a semitopological monoid and $g\bullet h=gh$, $g, h\in G$ {\rm(}an embdding of $G$ into $b G$ is a homomorphism of semigroup $G$ into $b G${\rm)}.  
\end{itemize}
\end{df}

\begin{rem}\label{tscomp}
{\rm (a) If $b G$ is a lts-compactification of $G$, then from continuity of maps $\tilde\alpha_g: b G\to b G$, $\tilde\alpha_g(x)=\tilde\alpha (g, x)$, $\bullet^g: b G\to b G$, $\bullet^g (x)=g\bullet  x$, $g\in G$, and their coincidence on dense subset $G$ of $b G$ it follows that $\tilde\alpha=\bullet |_{G\times b G}$. The same holds for sm-compactification.

\medskip

(b) $b G$ is a sm-compactification of $G$ iff  $b G$ is a compactification of $G$ and $(bG, \bullet)$ is a semitopological monoid such that  $\bullet|_{G\times b G}=\tilde\alpha$. Indeed, (necessity) if $(bG, \bullet)$ is a semitopological monoid such that $g\bullet h=gh$, $g, h\in G$, then the restriction $\bullet|_{G\times b G}$ is continuous~\cite[Propositions 6.1, 6.2]{JLawson} or~\cite[Theorem 2.38]{HS}.  $g\bullet h=\alpha (g, h)$, $g, h\in G$. Putting $\tilde\alpha (g, x)=g\bullet x$, $g\in G$, $x\in b G$, by Lemma~\ref{exact} continuous extension of the action $\alpha$ to $G\times b G$ is obtained. Hence, $\bullet|_{G\times b G}=\tilde\alpha$. Sufficiency is evident.

\medskip

(c) If $b G$ is a $G$-compactification of $G$ such that $(bG, \bullet)$ is a right topological semigroup and $\bullet|_{G\times b G}=\tilde\alpha$, then $(bG, \bullet)$ is a right topological monoid. Indeed, for the unit $e$ of $G$ one has $\bullet_{e}: b G\to b G$, $\bullet_{e}(x)=x\bullet e$, is a continuous map and $x\bullet e|_{G\times\{e\}}=\alpha|_{G\times\{e\}}$. Hence, $\bullet_{e}(x)=x$, $x\in G$, and continuity of $\bullet_{e}$ implies that $x\bullet e=x$, $x\in b G$. 

For the map $\bullet^{e}: b G\to b G$, $\bullet^{e}(x)=e\bullet x$, $\bullet^{e}(x)=\tilde\alpha (e, x)=x$, $x\in b G$. Hence, $e\bullet x=x$, $x\in b G$. 

The same observation is valid for sm-compactifications.

\medskip

(d) If $(b G, b)$ is an Ellis compactification of $G$, then condition $\bullet|_{G\times b G}=\tilde\alpha$ is equivalent to $b(g)\bullet x=\tilde\alpha (g, x)$, $g\in G$, $x\in b G$. Hence, $$b(gh)=b(\alpha (g, h))=\tilde\alpha (g, b(h))=b(g)\bullet b(h),\ g, h\in G$$ and $b$ is a homomorphism of semigroup $G$ into $b G$.

\medskip

If $b G$ is a $G$-compactification of $G$ such that $(bG, \bullet)$ is a topological monoid (equivalently semigroup) and $g\bullet h=\alpha (g, h)=gh$, $g, h\in G$, then $e$ is the identity in $b G$, the maximal subgroup $H(e)$ of $b G$ contaning $e$ as unit, contains $G$ and is dense in $b G$. By~\cite[Theorem 2.32]{HS} all maximal subgroups of compact topological semigroups are closed and are topological groups. Hence, $b G$ is a topological group.}
\end{rem}

\begin{que}
Is there an example of a topological group $G$ and its lts-compactification which is not a sm-compactification?
\end{que}

\begin{pro}\label{compsg}
Let $G$ be a topological group. The following implications are valid for compactification $b G$ of $G$
 $$\begin{array}{ccccc}
 &  & sm-compactification  &  &   \\ 
&  \swarrow & \downarrow   & \searrow &   \\
G^{\leftrightarrow}-compactification & & Ellis\ compactification  &   & lts-compactification \\
&  \searrow & \downarrow   &  \swarrow  &   \\
& & G{-}compactification  &    &  \\
\end{array}
$$
\end{pro}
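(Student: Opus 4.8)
The plan is to establish the six implications shown in the diagram; five are immediate from the definitions and I would dispose of them first. For the three implications whose conclusion is ``$bG$ is a $G$-compactification'': a $G^{\leftrightarrow}$-compactification is, by definition, a $G$-compactification of the $G$-space $(G,G,\alpha)$, and an Ellis compactification and an lts-compactification are each, by definition, $G$-compactifications of $G$ (the first of the three was already recorded in Proposition~\ref{compinv}). For ``sm-compactification $\Rightarrow$ lts-compactification'': a semitopological monoid is in particular a left topological semigroup, and for an sm-compactification the embedding is a semigroup homomorphism, i.e.\ $g\bullet h=gh$ for $g,h\in G$ --- this is exactly what the definition of an lts-compactification asks. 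For ``sm-compactification $\Rightarrow$ Ellis compactification'': a semitopological monoid is in particular a right topological monoid, an sm-compactification is a $G$-compactification of $G$, and by Remark~\ref{tscomp}(a) its multiplication satisfies $\bullet|_{G\times bG}=\tilde\alpha$; these are precisely the requirements in the definition of an Ellis compactification.

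The one substantive implication is ``sm-compactification $\Rightarrow$ $G^{\leftrightarrow}$-compactification''. Let $(bG,b)$ be an sm-compactification, so $(bG,\bullet)$ is a compact semitopological monoid, $G$ sits densely in $bG$ as a topological subgroup with $g\bullet h=gh$, and (Remark~\ref{tscomp}(a)) $\bullet|_{G\times bG}=\tilde\alpha$; in particular $bG$ is a $G$-compactification of $(G,G,\alpha)$, so only the extension of the inverse action $\alpha^{*}(g,h)=hg^{-1}$ has to be produced. I would set $\tilde\alpha^{*}(g,x):=x\bullet g^{-1}$. This restricts to $\alpha^{*}$ on $G$ and is a set-theoretic action: $\tilde\alpha^{*}(e,x)=x\bullet e=x$ and $\tilde\alpha^{*}(gh,x)=x\bullet(h^{-1}g^{-1})=(x\bullet h^{-1})\bullet g^{-1}=\tilde\alpha^{*}(g,\tilde\alpha^{*}(h,x))$, using that the embedding is a homomorphism and $\bullet$ is associative. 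What needs work is joint continuity of $\tilde\alpha^{*}\colon G\times bG\to bG$. For this I would feed the ``other side'' of the multiplication into the joint-continuity theorem cited in Remark~\ref{tscomp}(b) (Lawson): the opposite monoid $(bG,\bullet^{\mathrm{op}})$, with $x\bullet^{\mathrm{op}}y:=y\bullet x$, is again a compact semitopological monoid, and $G$ with the reversed operation $g\cdot^{\mathrm{op}}h:=hg$ is a dense topological subgroup of it satisfying $g\bullet^{\mathrm{op}}h=g\cdot^{\mathrm{op}}h$; hence $\bullet^{\mathrm{op}}|_{G\times bG}$ is continuous, i.e.\ $(g,x)\mapsto x\bullet g$ is continuous on $G\times bG$. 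Precomposing with $(g,x)\mapsto(g^{-1},x)$, continuous because inversion in $G$ is, shows $\tilde\alpha^{*}$ is continuous. By Lemma~\ref{exact}, $bG$ is then a $G$-extension of $(G,G,\alpha^{*})$, hence (being compact, with $G$ a dense invariant subspace) a $G$-compactification of $(G,G,\alpha^{*})$; combined with the previous point, $bG\in\mathbb{G}^{\leftrightarrow}(G)$.

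I expect the joint-continuity step to be the main obstacle. Separate continuity of $(g,x)\mapsto x\bullet g^{-1}$ is free from the semitopological axioms (right topology gives continuity in $x$, left topology gives continuity in $g$ along $G$), but promoting it to joint continuity genuinely uses the Lawson-type theorem; the only idea involved is that the hypotheses of that theorem are symmetric under reversing the operations, so it can be applied to the ``right'' translations even though it is stated for the ``left'' ones. Everything else is bookkeeping: unfolding the five compactification definitions and noting that a semitopological monoid structure automatically supplies the right-/left-topological-semigroup condition and that every sm-, Ellis-, lts-, and $G^{\leftrightarrow}$-compactification is, tautologically, a $G$-compactification.
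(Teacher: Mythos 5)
Your proposal is correct and follows essentially the same route as the paper: the only nontrivial implication is sm-compactification $\Rightarrow$ $G^{\leftrightarrow}$-compactification, handled by setting $\tilde\alpha^{*}(g,x)=x\bullet g^{-1}$, invoking Lawson's joint-continuity results to get continuity of multiplication restricted to $bG\times G$, and concluding via Lemma~\ref{exact}. Your derivation of the right-hand continuity by passing to the opposite monoid is just a way of obtaining what the paper gets by citing Lawson's Propositions 6.1 and 6.2 directly, which already cover both sides.
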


\begin{proof}
Only implication $b G$ is a ss-compactification $\Longrightarrow$ $b G$ is a $G^{\leftrightarrow}$-compactification is not trivial.

If $(b G, \bullet)$ is a sm-compactification of $G$, then by~\cite[Propositions 6.1, 6.2]{JLawson} the restrictions $\bullet|_{G\times b G}$ and $\bullet|_{b G\times G}$ of $\bullet$ are continuous. $g\bullet h=\alpha (g, h)=\alpha^* (h^{-1}, g)$, $g, h\in G$. Putting $\tilde\alpha (g, x)=g\bullet x$, $\tilde{\alpha}^*(g, x)=x\bullet g^{-1}$, $g\in G$, $x\in b G$, by Lemma~\ref{exact} continuous extensions of actions $\alpha$ and $\alpha^*$ to $G\times b G$ are obtained. Hence, $b G$ is $G^{\leftrightarrow}$-compactification of $G$. 
\end{proof}

\begin{rem}
{\rm (a) $b G\leq b_r G$ for a sm-compactification $b G$ of $G$, $b G\leq b_r G$ for Ellis compactifications $b G$ of a SIN group $G$.

(b) Suppose the poset of sm-compactifications of $G$ is not empty. In that case, the WAP-compactification of $G$ is its maximal element and a proper compactification of $G$ (any non-proper semitopological semigroup compactification of $G$ can't be "greater" than its proper semitopological semigroup compactification).}
\end{rem}

$\mathbb{E} (G)$ is the poset of Ellis compactifications of $G$.

\begin{pro}\label{ssaieqinv}
If $b G$ is a sm-compactification of $G$ and $s$ is a {\rm(}continuous{\rm)} self-inverse map of $b G$ such that $s|_{G}=*$, then $s$ is an involution on $(b G, \bullet)$. 

If $b G$ is a lts-compactification of $G$ and $s$ is a {\rm(}continuous{\rm)} involuton on $(b G, \bullet)$, then $b G$ is a sm-compactification of $G$ {\rm(}with involution{\rm)}.

If $b G$ is an Ellis compactification of $G$ and $s$ is a {\rm(}continuous{\rm)} self-inverse map of $b G$ such that $s|_{G}=*$, then $b G$ is a sm-compactification of $G$ {\rm(}with involution{\rm)} iff $s$ is an involution on $(b G, \bullet)$.
\end{pro}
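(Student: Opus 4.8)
The plan is to handle the three assertions in order, exploiting Remark~\ref{seminvtop} and the already-established equivalences of Remark~\ref{tscomp} as the workhorses. For the first assertion, suppose $bG$ is a sm-compactification with a continuous self-inverse map $s$ restricting to $*$ on $G$. We already know (by the axioms of a sm-compactification and Remark~\ref{tscomp}(a),(b)) that $\bullet\colon bG\times bG\to bG$ is separately continuous and agrees with group multiplication on $G\times G$. On the dense set $G$ we have $s(g\bullet h)=s(gh)=(gh)^{-1}=h^{-1}g^{-1}=s(h)\bullet s(g)$. Fix $x\in bG$ and consider the two maps $bG\to bG$, $y\mapsto s(x\bullet y)$ and $y\mapsto s(y)\bullet s(x)$; the first is continuous because multiplication is continuous in the right variable and $s$ is continuous, the second is continuous because $s$ is continuous and multiplication is continuous in the left variable. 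They agree for $y\in G$, hence on $bG$. Now fix $y\in bG$ and run the same argument in the variable $x$ (using continuity of multiplication in the left variable for $s(x\bullet y)$ and in the right variable for $s(y)\bullet s(x)$): the two maps agree on the dense set $x\in G$ by the previous step applied pointwise, hence everywhere. This gives $s(x\bullet y)=s(y)\bullet s(x)$ for all $x,y\in bG$, so $s$ is an involution on $(bG,\bullet)$.

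For the second assertion, let $bG$ be a lts-compactification — so $(bG,\bullet)$ is a left topological monoid with $\bullet|_{G\times bG}=\tilde\alpha$ by Remark~\ref{tscomp}(a) — equipped with a continuous involution $s$ with $s|_G=*$. By Remark~\ref{seminvtop}, continuity of the left translations $\bullet^{s(y)}$ (which we have, since $bG$ is left topological) is equivalent to continuity of the right translations $\bullet_y$; as $s$ is a bijection of $bG$, this yields continuity of $\bullet_y$ for every $y\in bG$. Hence $(bG,\bullet)$ is simultaneously left and right topological, i.e.\ a semitopological monoid, and $g\bullet h=gh$ on $G$; so $bG$ is a sm-compactification (with involution).

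For the third assertion, let $bG$ be an Ellis compactification: $(bG,\bullet)$ is a right topological monoid with $\bullet|_{G\times bG}=\tilde\alpha$. Suppose first that $bG$ is a sm-compactification with $s$ a continuous self-inverse map, $s|_G=*$; then the first assertion already proved shows $s$ is an involution on $(bG,\bullet)$. Conversely, suppose $s$ is an involution on $(bG,\bullet)$. Then by Remark~\ref{seminvtop} the right-topological structure of $bG$ forces all left translations to be continuous as well, so $(bG,\bullet)$ is a semitopological monoid; combined with $g\bullet h=gh$ on $G$ and $\bullet|_{G\times bG}=\tilde\alpha$, Remark~\ref{tscomp}(b) gives that $bG$ is a sm-compactification, and it carries the continuous involution $s$.

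The only genuine subtlety — and the step I would write out most carefully — is the ``separate continuity in each variable twice'' argument in the first assertion: one must be scrupulous that at the second stage the equality $s(x\bullet y)=s(y)\bullet s(x)$ is already known for all $x$ in the \emph{dense} set $G$ and all $y\in bG$ (not just for $x,y\in G$), which is exactly what the first stage delivers, so that a density argument in $x$ closes the proof. Everything else is a direct invocation of Remarks~\ref{seminvtop} and~\ref{tscomp} together with the cited separate-continuity fact \cite[Propositions 6.1, 6.2]{JLawson}.
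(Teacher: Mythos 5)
Your proposal is correct and takes essentially the same route as the paper: the identical two-stage density argument (first in the right variable for elements of $G$, then in the left variable over all of $bG$) for the first assertion, and Remark~\ref{seminvtop} combined with Remark~\ref{tscomp} for the second and third. One slip to correct: in your first stage you write ``Fix $x\in bG$,'' but at that point the identity $s(x\bullet y)=s(y)\bullet s(x)$ is only available on $G\times G$, so the first stage must fix $x\in G$ and let $y$ run over $bG$ --- which is exactly what your closing paragraph describes, so the intended argument is sound and matches the paper's.
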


\begin{proof}
Since $s$ is a self-inverse map of $b G$ such that $s|_{G}=*$ 
$$s(g\bullet h)=s(gh)=h^{-1}g^{-1}=s(h)\bullet s(g),\ g, h\in G.$$
Since $b G$ is a sm-compactification of $G$ $s(g\bullet y)=s(\tilde\alpha (g, y))$, $y\in b G$, $g\in G$. 
For a fixed $g\in G$ the map $F_g: b G\to b G$, $F_g (y)=s(y)\bullet s(g)$, is continuous and $F_g|_G=s\circ \alpha|_{\{g\}\times G}$. Since $\tilde\alpha: G\times b G\to b G$ is continuous and $\tilde\alpha|_{\{g\}\times G}= \alpha|_{\{g\}\times G}$, 
$$s(g\bullet y)=s(y)\bullet s(g),\ y\in b G, g\in G.$$
Since $b G$ is a sm-compactification of $G$  the maps $T_y: b G\to b G$, $T_y(x)=s(x\bullet y)$ and $P_y: b G\to b G$, $P_y(x)=s(y)\bullet s(x)$ are  continuous and $T_y|_G=P_y|_G$. Hence, $s(x\bullet y)=T_y (x)=P_y (x)=s(y)\bullet s(x)$, $x, y\in b G$, and $s$ is involution on $(b G, \bullet)$. 

\medskip

By Remark~\ref{seminvtop} if $b G$ is a lts-compactification of $G$ with involution on $(b G, \bullet)$, then $b G$ is a sm-compactification of $G$ (with involution).

\medskip

The proof of the last statement follows from the previous considerations.
\end{proof}

\begin{ex}\label{example}
{\rm $(\mathbb Z, +)$ is a discrete group. Let $b\mathbb Z=\{-\infty\}\cup\mathbb Z\cup\{+\infty\}$ be a linearly ordered ($-\infty< x<+\infty$, $x\in\mathbb Z$) $G$-compactification of $\mathbb Z$. $b\mathbb Z$ with multiplication $x\bullet y=x+y$, $\pm\infty\bullet x=x\bullet\pm\infty=\pm\infty$, $x, y\in\mathbb Z$, $\pm\infty\bullet+\infty=+\infty$, $\pm\infty\bullet-\infty=-\infty$ is a monoind and $\bullet|_{\mathbb Z}=+$. Moreover, the map $s: b\mathbb Z\to b\mathbb Z$, $s(-\infty)=+\infty$, $s(x)=-x$, $x\in\mathbb Z$, $s(+\infty)=-\infty$ is a self inverse map, $s|_{\mathbb Z}$ is the involution on $\mathbb Z$. The map $\bullet_y$, $y\in b\mathbb Z$, is continuous,  the map $\bullet^{+\infty}$ is discontinuous at the point $-\infty$ ($+\infty\bullet x=+\infty$, $x\in\mathbb Z$, $+\infty\bullet-\infty=-\infty$, and $\bullet^{+\infty}$ is discontinuous at the point $-\infty$). Hence, $b\mathbb Z\in\mathbb E (\mathbb Z)\cap\mathbb G^*(\mathbb Z)$, $s$ is not an involution on $b\mathbb Z$
$$(-\infty=s(-\infty\bullet+\infty),\ s(+\infty)\bullet s(-\infty)=-\infty\bullet+\infty=+\infty),$$
$b\mathbb Z$ is not a sm-compactification of $\mathbb Z$.}
\end{ex}

\begin{df}
Let $G$ be a topological group. 
\begin{itemize}
\item  A lts-compactification $b G$ of $G$ with $s$ is a lts-compactification of $G$ with a {\rm(}continuous{\rm)}  self-inverse map $s$ such that $s|_G=*$.
\item An Ellis compactification $b G$ of $G$ with $s$ is an Ellis compactification of $G$ with a {\rm(}continuous{\rm)}  self-inverse map $s$ such that $s|_G=*$. 
\item A sm$^*$-compactification $b G$ of $G$ is a sm-compactification of $G$ such that $(bG, \bullet)$ is monoid with {\rm(}continuous{\rm)} involution. 
\end{itemize}
\end{df}

\begin{df}
A semigroup $S$ {\rm(}monoid $M${\rm)} is an inverse semigroup {\rm(}inverse monoid $M${\rm)} if 
$$\forall\ a\in S\ (M)\ \exists\ \mbox{the unque}\ a^*\in S\ (M)\ \mbox{such that}\ a\bullet a^*\bullet a=a\ \&\ a^*\bullet a\bullet a^*=a^*,$$
$a^*$ is an inverse of $a$.
\end{df}

\begin{rem}{\rm 
Every inverse semigroup  {\rm(}monoid{\rm)} is a semigroup {\rm(}monoid{\rm)} with involution $s: a\to a^*$ (see, for instance,~ \cite{Lawson}).}
\end{rem}

\begin{df}
Let $G$ be a topological group. 
\begin{itemize}
\item A sim-compactification $b G$ of $G$ is a $G$-compactification of $G$ and $(bG, \bullet)$ is a semitopological inverse monoid such that  $g\bullet h=gh$, $g, h\in G$, and inversion $a\to a^*$ is continuous.  
\end{itemize}
\end{df}

\begin{rem}
{\rm From considerations in~\cite[\S\ 2]{BITsankov} it follows that intermediate types (stronger than sm$^*$-compactifications and weaker than sim-compactifications) of compactifications of a topological group $G$ are not relevant.}
\end{rem}

\begin{pro}\label{order*comp}
Let $G$ be a topological group. The following implications are valid for compactification $b G$ of $G$
$$\begin{array}{ccccc}
 & &  sim{-}compactification  &  & \\
& & \downarrow   &  & \\
 &  & sm^*{-}compactification  &  \rightarrow   &  sm{-}compactification \\
 & \swarrow & \downarrow   &   & \downarrow \\
 G^*{-}compactification & & Ellis\ compactification\ with\ s  &  & G^{\leftrightarrow}-compactification \\
&  \searrow &  \downarrow &   \swarrow & \\
& & G{-}compactification &  & \\
\end{array}
$$
\end{pro}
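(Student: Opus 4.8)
The plan is to go through the eight arrows of the diagram and observe that all but one are either immediate from the definitions or already proved. The chain ``Ellis compactification with $s$'' $\Rightarrow$ ``Ellis compactification'' $\Rightarrow$ ``$G$-compactification'', and the arrow ``sm$^*$-compactification'' $\Rightarrow$ ``sm-compactification'', are contained in the definitions; ``$G^*$-compactification'' $\Rightarrow$ ``$G$-compactification'' and ``$G^{\leftrightarrow}$-compactification'' $\Rightarrow$ ``$G$-compactification'' are Proposition~\ref{compinv}; and ``sm-compactification'' $\Rightarrow$ ``$G^{\leftrightarrow}$-compactification'' is Proposition~\ref{compsg}. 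So only the arrows leaving the ``starred'' corner remain to be checked: sm$^*$ $\Rightarrow$ $G^*$, sm$^*$ $\Rightarrow$ ``Ellis compactification with $s$'', and sim $\Rightarrow$ sm$^*$.

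It is convenient to record first that a sm$^*$-compactification is the same thing as a sm-compactification $bG$ equipped with a continuous self-inverse map $s$ with $s|_G=*$: indeed, by Proposition~\ref{ssaieqinv} such an $s$ is automatically a continuous involution of the monoid $(bG,\bullet)$, and conversely a continuous involution of a sm-compactification which extends $*$ is precisely such an $s$. Granting this, sm$^*$ $\Rightarrow$ $G^*$ is immediate, since a sm-compactification is a $G$-compactification and $s$ is the required self-inverse map with $s|_G=*$. For sm$^*$ $\Rightarrow$ ``Ellis compactification with $s$'' I would invoke Remark~\ref{tscomp}: a sm-compactification satisfies $\bullet|_{G\times bG}=\tilde\alpha$, and a semitopological monoid is in particular a right topological monoid, so $(bG,\bullet)$ is an Ellis compactification; adjoining the self-inverse map $s$ with $s|_G=*$ exhibits it as an Ellis compactification with $s$.

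The one genuinely new step is sim $\Rightarrow$ sm$^*$. Let $bG$ be a sim-compactification, so that $(bG,\bullet)$ is a semitopological inverse monoid with $g\bullet h=gh$ for $g,h\in G$ and with continuous inversion $a\mapsto a^*$. A semitopological inverse monoid is in particular a semitopological monoid, so by Remark~\ref{tscomp} $bG$ is a sm-compactification and $\bullet|_{G\times bG}=\tilde\alpha$. The inversion $s\colon a\mapsto a^*$ is a continuous self-inverse map of $bG$, and it restricts to $*$ on $G$: for $g\in G$ the group inverse $g^{-1}\in G\subset bG$ satisfies $g\bullet g^{-1}\bullet g=g$ and $g^{-1}\bullet g\bullet g^{-1}=g^{-1}$, whence $g^*=g^{-1}$ by uniqueness of inverses in an inverse monoid. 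By the remark of the previous paragraph, $bG$ is therefore a sm$^*$-compactification. The remaining composite implications of the diagram (for instance sim $\Rightarrow$ sm $\Rightarrow$ $G^{\leftrightarrow}$) follow by transitivity.

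I do not anticipate a real obstacle: the proof is bookkeeping, with no estimates involved. The only point requiring a moment's attention is the normalization of the involution, i.e.\ verifying in each case that the self-inverse map at hand restricts to the group involution $*$ on $G$ --- for sim-compactifications this is forced by uniqueness of inverses, and for sm$^*$-compactifications it is exactly what Proposition~\ref{ssaieqinv} delivers. Everything else reduces to the observation that a semitopological monoid is a right topological monoid, together with the already established Propositions~\ref{compinv} and~\ref{compsg}.
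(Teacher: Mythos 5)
The paper states Proposition~\ref{order*comp} without proof, so there is no official argument to compare against; your write-up supplies exactly the bookkeeping the authors evidently intend, assembled from the ingredients they have prepared: the definitions, Propositions~\ref{compinv} and~\ref{compsg}, Remark~\ref{tscomp} (a semitopological monoid with $\bullet|_{G\times bG}=\tilde\alpha$ is in particular a right topological monoid, hence an Ellis compactification), Proposition~\ref{ssaieqinv}, and the remark that every inverse monoid is a monoid with involution $a\mapsto a^*$. Your handling of the one non-routine arrow, sim $\Rightarrow$ sm$^*$, is right: continuity of inversion gives the continuous self-inverse map, and uniqueness of inverses in an inverse monoid forces $g^*=g^{-1}$ for $g\in G$, so the inversion does restrict to the group involution. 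The composite arrows then follow by transitivity as you say. This is correct and is surely the intended argument.

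One point deserves a flag. The paper's definition of a sm$^*$-compactification literally asks only that $(bG,\bullet)$ be a monoid with \emph{some} continuous involution; it does not say $s|_G=*$. Your preliminary equivalence ``sm$^*$ $=$ sm $+$ continuous self-inverse $s$ with $s|_G=*$'' uses Proposition~\ref{ssaieqinv} only in the direction ``$s|_G=*$ implies $s$ is an involution of $(bG,\bullet)$''; the converse direction --- that an arbitrary continuous involution of the monoid restricts to $*$ on $G$ --- is false in general: for abelian $G$ every sm-compactification is a commutative monoid (left and right translations are continuous and agree on the dense subgroup), so the identity map is a continuous involution, and it does not extend $*$ unless $G$ is Boolean. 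Hence the arrows sm$^*\rightarrow G^*$ and sm$^*\rightarrow$ Ellis compactification with $s$ genuinely need the involution in the definition of sm$^*$ to extend $*$. You read the definition that way (reasonably, and consistently with the authors' evident intent and with the arrow sim $\Rightarrow$ sm$^*$, where $g^*=g^{-1}$ is automatic), but this is an interpretive normalization of the definition rather than a consequence of its literal wording, and it is worth stating explicitly.
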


\begin{rem}
{\rm (a) For every topo{\rm l}ogica{\rm l} group $G$ the comp{\rm l}etion of $G$ with respect to the maxima{\rm l} tota{\rm ll}y bounded equiuniformity is an E{\rm ll}is compactification $\beta_G G$ (it may be examined as the Samuel compactification of $(G, R)$ or {\it the greatest ambit of $G$}). $\beta_G G$ is the maximal element of $\mathbb{E}(G)$. If $G$ is a SIN group, then  $\beta_G G=b_r G$. 

\medskip

(b) A group $G$ has a compactification which is a topo{\rm l}ogica{\rm l} group iff $G$ is {\it precompact} (the right uniformity $R$ is tota{\rm ll}y bounded). Moreover, if $G$ is {\it precompact}, then $\beta_G G$ is the unique $G$-compactification of $G$~\cite{ChK2}. 

\medskip

(c) The topological group (($\aut ([0, 1]), \tau_p)$) has no sm-compactifications~\cite{Megr2001}.  

\medskip

(d) If $G$ is a locally compact group, then the Alexandroff one-point compactification $\alpha G$ is the least $G$-compactification of $G$ and $\alpha G$ is a sm$^*$-compactification~\cite{Berg}.

\medskip 

(e) If the poset of sm$^*$-compactifications (respectively sim-compactifications) of $G$ is not empty, then there is a maximal sm$^*$-compactification  (respectively sim-compactification) of $G$. 

Indeed, the product of (compact) monoids with involution (respectively inverse monoids) is a monoid with involution (respectively inverse monoid). The product in Tychonoff topology is a compact semitopological monoid with continuous involution (respectively compact semitopological inverse monoid with continuous inverse). The diagonal map of $G$ into the product is an embedding and an isomorphism of monoids. The closure of the image of $G$ is a compact semitopological monoid with continuous involution (respectively compact semitopological inverse monoid with continuous inverse). It is the maximal element in the corresponding poset.}
\end{rem}

\begin{que}
Is there an example of a topological group and its sm-compactification which is not a sm$^*$-compactification? 

Is there an example of a topological group and its compactification which is a semitopological inverse monoid with discontinuous inverse?

Let the poset of sm$^*$-compactifications of $G$ be not empty. Is its maximal element a WAP-compactification of $G$?
\end{que}

\begin{que}
Let a topological group have the unique $G$-compactification $b G$. Is $b G$ a compact topological group? 
\end{que}

\subsection{Maps of $G$-compactifications}

\begin{pro} 
Let $b' G$ and $b G$ be compactifications of a topological group $G$, $b' G\geq b G$ and $\varphi: b' G\to b G$ is the map of compactifications.
\begin{itemize}
\item[(a)] If $b G,\ b' G\in\mathbb{G}(G)$, then $\varphi$ is a $G$-map.
\item[(b)] If $b G,\ b' G\in\mathbb{G}^{\leftrightarrow}(G)$, then $\varphi$ is a $G$-map of $G$-spaces  $(G, b' G, \widetilde{\alpha}')$, $(G, b G, \widetilde{\alpha})$ and $(G, b' G, \widetilde{\alpha^*}')$, $(G, b G, \widetilde{\alpha^*})$.
\item[(c)]  If $b G,\ b' G\in\mathbb{G}^*(G)$, then $\varphi$  commutes with self-inverse maps $s'$ and $s$ on  $b' G$ and $b G$, respectively.
\item[(d)] If $b G,\ b' G\in\mathbb{E}(G)$, then $\varphi$ is a homomorphism of monoids.
\item[(e)]  If $b G,\ b' G$ are sm$^*$-compactifications of $G$, then $\varphi$ is a homomorphism of monoids with involutions. 
\end{itemize}
\end{pro}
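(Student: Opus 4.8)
The plan is to prove all five items by the same mechanism: a map of compactifications $\varphi\colon b'G\to bG$ is, by definition, continuous and satisfies $b = \varphi\circ b'$, so $\varphi$ restricted to $b'(G)$ equals $b$ via the identification with $G$; then any identity that holds on the dense subset $G$ and whose two sides are continuous functions of the relevant variables will propagate to all of $b'G$ (resp.\ $b'G\times b'G$). So the heart of each item is to name the two continuous maps, check they agree on $G$, and invoke density of $G$ in $b'G$ together with Hausdorffness of $bG$.

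First I would treat (a). For each fixed $g\in G$ the maps $x\mapsto \varphi(\widetilde\alpha'(g,x))$ and $x\mapsto \widetilde\alpha(g,\varphi(x))$ from $b'G$ to $bG$ are continuous (composition of continuous maps, using that $\widetilde\alpha'$, $\widetilde\alpha$ are the extended actions and $\varphi$ is continuous), and they agree on the dense set $G$ because there both equal $b(gh)$ when $x=b'(h)$; hence they agree on $b'G$. Since this holds for every $g\in G$, $\varphi$ is a $G$-map. Item (b) is just (a) applied twice, once to the pair of actions $\widetilde\alpha'$, $\widetilde\alpha$ and once to $\widetilde{\alpha^*}'$, $\widetilde{\alpha^*}$ (both exist since $b'G, bG\in\mathbb G^{\leftrightarrow}(G)$); no new idea is needed. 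For (c), compare the continuous maps $\varphi\circ s'$ and $s\circ\varphi$ from $b'G$ to $bG$: on $G$ both send $b'(g)$ to $b(g^{-1})$ because $s'|_G=*$ and $s|_G=*$ and $\varphi\circ b' = b$, so by density they coincide on $b'G$.

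For (d) I would first recall Remark~\ref{tscomp}(d): in an Ellis compactification the multiplication satisfies $b(g)\bullet x=\widetilde\alpha(g,x)$ for $g\in G$, $x\in bG$, and $b$ is a semigroup homomorphism. So from (a) we already get $\varphi(b'(g)\bullet' x) = \varphi(\widetilde\alpha'(g,x)) = \widetilde\alpha(g,\varphi(x)) = b(g)\bullet\varphi(x) = \varphi(b'(g))\bullet\varphi(x)$ for all $g\in G$, $x\in b'G$; that is, $\varphi$ is multiplicative when the left factor lies in $b'(G)$. To upgrade to arbitrary left factors I use right topologicality: for fixed $x\in b'G$ the maps $y\mapsto \varphi(y\bullet' x)$ and $y\mapsto \varphi(y)\bullet\varphi(x)$ are continuous in $y$ (the second because $bG$ is right topological, so $z\mapsto z\bullet\varphi(x)$ is continuous, precomposed with the continuous $\varphi$; the first because $y\mapsto y\bullet' x$ is continuous on $b'G$ and $\varphi$ is continuous), and they agree on the dense set $b'(G)$ by the previous step; hence they agree on $b'G$. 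Finally one checks $\varphi$ sends the identity $e$ of $b'G$ to the identity of $bG$ — immediate since $e=b'(e)\mapsto b(e)=e$. Thus $\varphi$ is a monoid homomorphism. Item (e) then adds nothing beyond combining (c) and (d): $\varphi$ is a monoid homomorphism by (d) and commutes with the involutions by (c), hence is a homomorphism of monoids with involution.

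The only place requiring care — the \textbf{main obstacle} — is the order of the density arguments in (d): one cannot directly argue $\varphi(y\bullet' x)=\varphi(y)\bullet\varphi(x)$ for all $y,x$ simultaneously by a single density step, because the multiplications are only right topological (so joint continuity fails and one cannot use density of $G\times G$ in $b'G\times b'G$). The two-stage approach — first fix $g\in G$ on the left and vary $x$ over $b'G$ (using continuity of the extended action and item (a)), then fix $x\in b'G$ on the right and vary $y$ over $b'G$ (using right topologicality) — is exactly what circumvents this, and it is the standard device for right topological semigroup compactifications; I would make sure to present it in that sequence. Everything else is routine density-plus-continuity bookkeeping.
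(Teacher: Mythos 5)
Your proposal is correct and follows essentially the same route as the paper: each item is proved by exhibiting two continuous maps that agree on the dense image of $G$ (or $G\times G$), and item (d) uses exactly the paper's two-stage argument — first handle left factors in $b'(G)$ with arbitrary right factor (the paper does this by a density step, you by invoking (a) together with $\bullet|_{G\times bG}=\tilde\alpha$, which is equivalent), then fix the right factor and extend over the left factor via right topologicality. Your explicit warning that a single density step on $b'G\times b'G$ would fail for a merely right topological multiplication is precisely the point the paper's proof is structured around.
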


\begin{proof}
(a) Let $(b' G, b')$ and $(b G, b)$ be $G$-compactifications of $G$, $(G, b' G, \tilde\alpha')$ and $(G, b G, \tilde\alpha)$ are correspondent compact $G$-spaces. 
If $\varphi: b' G\to b G$ is the map of compactifications, then $b=\varphi\circ b'$ and 
$$\varphi (\tilde\alpha' (g, b'(h)))=\tilde\alpha((\id\times\varphi) (g, b'(h)))=\tilde\alpha (g, b(h))\ g, h\in G.$$
Hence, $\varphi$ is a $G$-map (the maps $\varphi\circ\tilde\alpha'$ and $\tilde\alpha\circ (\id\times\varphi)$ are continuous and coincide on a dense subset $G\times G$ of $G\times b' G$).

\medskip

(b)  For the $G$-map $\varphi$ of $G$-compactifications $(b' G, b')$ and $(b G, b)$ (taking into account (a)), one must check the fulfillment of equality
$$\varphi (\widetilde{\alpha^*}'(g, x))=\widetilde{\alpha^*}(g, \varphi (x)),\ g\in G,\ x\in b' X,$$
where $(G, b' G, \widetilde{\alpha^*}')$ and $(G, b G, \widetilde{\alpha^*})$ are compact $G$-spaces.
Since $b=\varphi\circ b'$, for $g, h\in G$ one has 
$$\varphi (\widetilde{\alpha^*}'(g, b' (h)))=\varphi\circ b' (\alpha^*(g, h))=b (\alpha^*(g, h))=\widetilde{\alpha^*}(g, b (h)),\ g, h\in G.$$
The continuity of $\varphi$ and actions finishes. 

\medskip

(c)  For the $G$-map $\varphi$ of $G$-compactifications $(b' G, b')$ and $(b G, b)$ (taking into account (a)), one must check the fulfillment of equality
$$\varphi (s'(x))=s(\varphi (x)),\ x\in b' G.$$
Since $b=\varphi\circ b'$, for $g\in G$ one has 
$$\varphi (s'(b' (g)))=\varphi\circ b' (g^{-1}))=b (g^{-1})=s (b(g)).$$
The continuity of $\varphi$ and self-inverse maps finishes the proof. 

\medskip

(d) For the $G$-map $\varphi$ of $G$-compactifications $(b' G, b')$ and $(b G, b)$ (taking into account (a)), one must check the fulfillment of equality
$$\varphi(x\bullet y)=\varphi(x)\bullet\varphi(y),\ \forall\ (x, y)\in b' G\times b' G.\eqno{{\rm(eq)}}$$
Since $b=\varphi\circ b'$, where $b', b$ are homomorphisms of semigroups,  for $g, h\in G$ one has
$$\varphi(b'(g)\bullet b'(h))=\varphi(b'(g\cdot h))=b(g\cdot h)=b(g)\bullet b(h)=\varphi(b'(g))\bullet\varphi(b'(h))$$
and (eq) holds for $(x, y)\in b'(G)\times b'(G)$.

The restrictions of the maps $$\Psi: b' G\times b' G\to b G,\ \Psi (x, y)=\varphi(x\bullet y),\ \mbox{and}$$  
$$\Psi': b' G\times b' G\to b G,\ \Psi' (x, y)=\varphi(x)\bullet\varphi(y),$$
on the subset $b'(G)\times b' G$ are continuous. Hence, (eq) holds for $(x, y)\in b'(G)\times b' G$ (since continuous maps coincide on a dense subset $b'(G)\times b'(G)$).

Since $b' G$ is a right topological semigroup, for any fixed $y\in b' G$ the restrictions of the maps $\Psi$ and $\Psi'$ to $b' G\times\{y\}$ are continuous and coincide on a dense subset  $b'(G)\times\{y\}$. Hence, (eq) holds on $b' G\times\{y\}$, $y\in b' G$. Thus (eq) holds on $b' G\times b' G$. It remains to note that $b (e)$, $b'(e)$ are identities in $b G$ and $b' G$ respectively, and $\varphi (b' (e))=b (e)$. 

\medskip

(e) follows from (c) and (d). 
\end{proof}

The order on compactifications induces the order on subsets $\mathbb{G}(G)\supset\mathbb{G}^{\leftrightarrow}(G)\supset\mathbb{G}^*(G)$, $\mathbb{G}(G)\supset\mathbb{E}(G)\supset sm-{\rm compactifications\ of}\ G\supset  sm^*-{\rm compactifications\ of}\ G\supset  sim-{\rm compactifications\ of}\ G$. 

\begin{cor} 
$\mathbb{G}^* (G)$ is a partially ordered subset of $\mathbb{G}^{\leftrightarrow} (G)$, $\mathbb{G}^{\leftrightarrow} (G)$ is a partially ordered subset of $\mathbb{G} (G)$,

sim-compactifications is a partially ordered subset of sm$^*$-compactifications,  sm$^*$-compactifications is a partially ordered subset of sm-compactifications, sm-compactifications is a partially ordered subset of $\mathbb{E}(G)$, $\mathbb{E}(G)$ is a partially ordered subset of $\mathbb{G} (G)$.
\end{cor}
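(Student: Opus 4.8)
The plan is to deduce the statement from the set-theoretic inclusions already established together with the preceding Proposition on maps of $G$-compactifications. Recall that the class of all proper compactifications of $G$ carries the partial order in which $b'G\geq bG$ means there is a map of compactifications $\varphi\colon b'G\to bG$; since $G$ is dense in $b'G$ this $\varphi$ is unique (two such maps agree on the dense set $b'(G)$ and $bG$ is Hausdorff), and reflexivity, transitivity and antisymmetry of $\leq$ (the latter up to equivalence of compactifications, as usual) are standard. Consequently every subclass is automatically a partially ordered set under the restricted order, and the content of the corollary reduces to two points: that the distinguished classes are nested, and that each inclusion is both order-preserving and order-reflecting, i.e. that the order relation between two members of a subclass is detected inside $\mathbb{G}(G)$.

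For the nestings I would simply cite the earlier results. Proposition~\ref{compinv} gives $\mathbb{G}^*(G)\subseteq\mathbb{G}^{\leftrightarrow}(G)\subseteq\mathbb{G}(G)$. Propositions~\ref{compsg} and~\ref{order*comp} give that a sim-compactification is an sm$^*$-compactification, an sm$^*$-compactification is an sm-compactification, an sm-compactification is an Ellis compactification, and an Ellis compactification is a $G$-compactification; in other words the classes of sim-, sm$^*$-, sm-compactifications, $\mathbb{E}(G)$ and $\mathbb{G}(G)$ form a descending chain under inclusion.

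For order-preservation and order-reflection I would invoke the preceding Proposition. If $b'G\geq bG$ in $\mathbb{G}(G)$ and both happen to lie in a smaller class, then the connecting map $\varphi$ automatically carries the relevant extra structure: by part~(a) it is a $G$-map; by parts~(b) and~(c) it respects the inverse action and the self-inverse maps when both compactifications lie in $\mathbb{G}^{\leftrightarrow}(G)$, resp. $\mathbb{G}^*(G)$; by parts~(d) and~(e) it is a homomorphism of monoids, resp. of monoids with involution, when both lie in $\mathbb{E}(G)$, resp. among the sm$^*$-compactifications (hence also among the sm- and sim-compactifications, these being special cases). Conversely, a structure-preserving map between two members of a subclass is in particular a map of compactifications, so $b'G\geq bG$ holds inside a subclass exactly when it holds in $\mathbb{G}(G)$. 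Thus each inclusion in the chain is an embedding of posets, which is precisely the assertion.

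I do not anticipate any genuine obstacle; the corollary is formal. The one point requiring a moment's care is that ``partially ordered subset'' is unambiguous for the semigroup classes: a map of Ellis compactifications automatically preserves identities (as noted in the proof of the preceding Proposition), and a self-inverse map of $bG$ extending the involution of $G$ is unique, so the additional algebraic data on a compactification in any of these classes is determined by the compactification itself and the restricted order does not depend on which structure one remembers.
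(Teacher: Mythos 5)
Your proposal is correct and follows essentially the same route as the paper, which treats this corollary as an immediate consequence of the established chain of inclusions (Propositions on the implications between compactification types) together with the preceding Proposition showing that a map of compactifications between two members of a subclass automatically preserves the additional structure, so that the order on each subclass is just the restriction of the order on $\mathbb{G}(G)$. No gap.
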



\section{Ellis compactifications}

Since the Cartesian product $X^X$ may be regarded as the set of self-maps of $X$ ($f=(f(x))_{x\in X}\in X^X$), there is a semigroup structure (multiplication is composition $\circ$ of maps) on $X^X$.

If $X$ is a topological space, then $(X^X, \circ)$ is a right topological monoid (multiplication on $X^X$ in the Tychonoff topology is continuous on the right, the identity map is identity in $X^X$).   

{\it The induced map of products.} The map $\varphi: b' X\to b X$ of compactifications of $X$ defines the equivalence relation $\sim_{\varphi}$ on $b' G$ 
$$x\sim_{\varphi}y\ \Longleftrightarrow\ \exists\ z\in b G\ \mbox{such that}\ x, y\in \varphi^{-1}(z).$$
By $[x]$ the equivalence class of $x\in b' G$ is denoted. 

The Cartesian product $\varphi^{b' X}:   (b' X)^{b' X}\to  (b X)^{b' X}$ of maps $\varphi$ is correctly defined.  

Let a subset $Y\subset  b' X$ consist of one representative from each equivalence class of $\sim_{\varphi}$.The map $\varphi_{Y}: (b X)^{b' X}\to  (b X)^{b X}$ is the composition of projection $\pr_{Y}: (b X)^{b' X}\to  (b X)^{Y}$ and the map $(b X)^{Y}\to (b X)^{b X}$ which identifies coordinates under the bijection $\varphi|_Y: Y\to X$. The composition $\Phi=\varphi_{Y}\circ\varphi^{b' X}:  (b' X)^{b' X}\to  (b X)^{b X}$ is correctly defined.  

\begin{pro}\cite[\S\ 2.7 Constructions,\ 2]{Vries}
Let $(G, X, \theta)$ be a $G$-space. Then $(G, X^X, \theta_{\Delta X})$ is a $G$-space, where 
$$\theta_{\Delta X} (g,  f)(x)=\theta (g, f(x)),\ x\in X,\eqno{(\Delta)}$$
is the diagonal action.
\end{pro}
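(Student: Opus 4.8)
The plan is to verify directly that the formula $(\Delta)$ defines an action of $G$ on $X^X$ and that this action is continuous in the Tychonoff (product) topology on $X^X$. The algebraic axioms are immediate: for the identity, $\theta_{\Delta X}(e,f)(x)=\theta(e,f(x))=f(x)$ for all $x\in X$, so $\theta_{\Delta X}(e,f)=f$; for the composition law, $\theta_{\Delta X}(gh,f)(x)=\theta(gh,f(x))=\theta(g,\theta(h,f(x)))=\theta(g,\theta_{\Delta X}(h,f)(x))=\theta_{\Delta X}(g,\theta_{\Delta X}(h,f))(x)$, using only that $\theta$ is an action. So the only real content is continuity of $\theta_{\Delta X}\colon G\times X^X\to X^X$.

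For continuity I would use the standard characterization of convergence / neighborhoods in the product topology. A subbasic neighborhood of a point $g_0\cdot f_0=\theta_{\Delta X}(g_0,f_0)$ in $X^X$ is determined by a single coordinate $x\in X$ and an open set $O\subset X$ containing $\theta(g_0,f_0(x))$; I must find a neighborhood $U$ of $g_0$ in $G$ and a neighborhood $W$ of $f_0$ in $X^X$ such that $\theta(g,f(x))\in O$ whenever $g\in U$ and $f\in W$. Since $\theta\colon G\times X\to X$ is continuous at $(g_0,f_0(x))$, there are a neighborhood $U$ of $g_0$ in $G$ and a neighborhood $V$ of $f_0(x)$ in $X$ with $\theta(U\times V)\subset O$. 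Now take $W=\{f\in X^X\mid f(x)\in V\}$, which is a subbasic (hence open) neighborhood of $f_0$ in $X^X$ depending only on the single coordinate $x$. Then for $g\in U$ and $f\in W$ we get $\theta(g,f(x))\in\theta(U\times V)\subset O$, as required. Because a general basic neighborhood of $g_0\cdot f_0$ is a finite intersection of such subbasic ones, over finitely many coordinates $x_1,\dots,x_n$, one intersects the finitely many resulting $U_i$'s and $W_i$'s to obtain the desired product neighborhood; the finite intersection of neighborhoods of $g_0$ is again a neighborhood of $g_0$.

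Having established that $\theta_{\Delta X}$ satisfies the action axioms and is continuous, we conclude that $(G,X^X,\theta_{\Delta X})$ is a $G$-space. I do not expect any genuine obstacle here: the argument is a routine ``coordinatewise'' verification, and the key point is simply that the product topology is generated by coordinate evaluations, so continuity of the diagonal action reduces to continuity of $\theta$ in each single coordinate, which is given. The only thing to be careful about is handling the finite intersections correctly when passing from subbasic to basic open sets, and noting that the neighborhood $W$ of $f_0$ can always be chosen to constrain only the finitely many coordinates that appear in the target neighborhood.
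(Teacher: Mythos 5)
Your proof is correct; the paper itself gives no argument for this proposition (it is quoted from de Vries), and your coordinatewise verification is exactly the standard one: the action axioms follow pointwise from those of $\theta$, and continuity follows because each composite $\mathrm{ev}_x\circ\theta_{\Delta X}=\theta\circ(\id\times\mathrm{ev}_x)$ is continuous, which is precisely your subbasic-neighborhood argument. Nothing further is needed.
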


If $\varphi: b' X\to b X$ is a map of $G$-compactifications of $X$, then the induced map $\Phi$ is a $G$-map ($\varphi^{b' X}$ and $\varphi_Y$ are $G$-maps). 

\begin{pro}
Let $(G=(G, \tau_p), X, \theta)$ be a $G$-space. Then 
\begin{itemize}
\item[(1)] the map
$$\imath_X: G\to X^X,\ \imath_X (g)(x)=\theta (g, x),\ x\in X,\eqno{(\imath)}$$
is a topological isomorphism of $G$ onto the subsemigroup of $X^X$.
\item[(2)]  $\theta_{\Delta X} (g,  f)=\imath_X (g)\circ f$,
\item[(3)]  $\imath_X$ is a $G$-map of $G$-spaces $(G, G, \alpha)$ and $(G, X^X, \theta_{\Delta X})$, the following diagram is commutative 
$$\begin{array}{ccc}
\quad G\times G & \stackrel{{\id\times \imath_X}}\hookrightarrow & G\times X^X \\
 \alpha  \downarrow &   & \quad \downarrow \theta_{\Delta X}\\
\quad G & \stackrel{\imath_X}\hookrightarrow & X^X.
\end{array}$$
\end{itemize}
\end{pro}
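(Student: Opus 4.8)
The plan is to obtain all three assertions by unwinding the relevant definitions; essentially the only point that deserves a sentence of justification is that the topology $\tau_p$ is, by its very construction, the topology carried over to $G$ from $X^X$ by the map $\imath_X$.

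First I would check that $\imath_X$ is a homomorphism of semigroups and is injective. For $g,h\in G$ and $x\in X$ the associativity of the action gives
$$\imath_X(gh)(x)=\theta(gh,x)=\theta\bigl(g,\theta(h,x)\bigr)=\imath_X(g)\bigl(\imath_X(h)(x)\bigr)=\bigl(\imath_X(g)\circ\imath_X(h)\bigr)(x),$$
so $\imath_X(gh)=\imath_X(g)\circ\imath_X(h)$; since also $\imath_X(e)=\id$, the image $\imath_X(G)$ is a subgroup of $(X^X,\circ)$ (in particular a subsemigroup, indeed a submonoid). Injectivity of $\imath_X$ is exactly the standing assumption that $\theta$ is effective: $\imath_X(g)=\imath_X(g')$ means $\theta(g,x)=\theta(g',x)$ for every $x\in X$, hence $g^{-1}g'\in\bigcap_{x\in X}\St_x=\{e\}$.

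For the topological part of (1): the Tychonoff topology on $X^X$ is the topology of pointwise convergence of self-maps of $X$, with subbase the sets $\pi_x^{-1}(O)$ ($x\in X$, $O$ open in $X$), where $\pi_x\colon X^X\to X$ is the $x$-th projection. Since
$$\imath_X^{-1}\bigl(\pi_x^{-1}(O)\bigr)=\{g\in G:\theta(g,x)\in O\}=[x,O],$$
and the sets $[x,O]$ are, by definition, a subbase of $\tau_p$, the topology $\tau_p$ coincides with the initial topology induced on $G$ by $\imath_X$ from $X^X$. As $\imath_X$ is injective, this is precisely the statement that $\imath_X$ is a homeomorphism of $G$ onto the subspace $\imath_X(G)\subset X^X$; together with the previous paragraph this gives (1). (The standing hypothesis that $(G,\tau_p)$ is a $G$-space — $\tau_p$ a group topology and $\theta$ continuous — is what makes $X^X$ with its product topology the appropriate codomain.)

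Claim (2) is then immediate from the defining formula $(\Delta)$: for $g\in G$, $f\in X^X$ and $x\in X$,
$$\theta_{\Delta X}(g,f)(x)=\theta\bigl(g,f(x)\bigr)=\imath_X(g)\bigl(f(x)\bigr)=\bigl(\imath_X(g)\circ f\bigr)(x),$$
so $\theta_{\Delta X}(g,f)=\imath_X(g)\circ f$. Finally (3) follows by combining (1) and (2): for $g,h\in G$,
$$\theta_{\Delta X}\bigl((\id\times\imath_X)(g,h)\bigr)=\theta_{\Delta X}\bigl(g,\imath_X(h)\bigr)=\imath_X(g)\circ\imath_X(h)=\imath_X(gh)=\imath_X\bigl(\alpha(g,h)\bigr),$$
i.e. $\imath_X\circ\alpha=\theta_{\Delta X}\circ(\id\times\imath_X)$, which is the asserted commutativity of the square and says that $\imath_X$ is a $G$-map. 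Since every step is a one-line computation, there is no real obstacle; the single point to handle with a little care is the identification of $\tau_p$ with the subspace topology of $\imath_X(G)$, carried out above.
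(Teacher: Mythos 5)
Your proposal is correct and follows essentially the same route as the paper's proof: the homomorphism and equivariance identities are the same one-line computations, injectivity comes from effectiveness, and the embedding claim is the observation that $\imath_X^{-1}(\pi_x^{-1}(O))=[x,O]$ identifies $\tau_p$ with the subspace topology on $\imath_X(G)$ (which the paper states in one sentence and you spell out).
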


\begin{proof} (1) Since the action is effective, $\imath_X$ is injective. $\imath_X$ is an embedding, because topology on $G$ is topology of pointwise convergence. $\forall\ x\in X$ 
$$(\imath_X (g)\circ \imath_X (h))(x)=\imath_X (g)(\imath_X (h)(x))\stackrel{(\imath)}{=}\imath_X (g)(\theta (h, x)))\stackrel{(\imath)}{=}\theta (g, \theta(h, x)))=\theta (g\cdot h, x)\stackrel{(\imath)}{=}\imath_X (g\cdot h)(x).$$
Thus, $\imath_X (g\cdot h)=\imath_X (g)\circ \imath_X (h)$ and on $\imath_X (G)$ the stucture of a group (with multiplication $\circ$) is correctly defined ($\i_X (G)$ is a subsemigroup of $X^X$). 

(2) Let $f=(f(x))_{x\in X}$, $g\in G$.  $\forall\ x\in X$ 
$$\theta_{\Delta X} (g,  f)(x)\stackrel{(\Delta)}{=}\theta (g, f(x))\stackrel{(\imath)}{=}\imath_X (g) (f(x))=(\imath_X (g)\circ f)(x).$$
Thus, $\theta_{\Delta X} (g,  f)=\imath_X (g)\circ f$.

(3) $\forall\ g, h\in G$ 
$$\imath_X (g\cdot h)\stackrel{(1)}{=}\imath_X (g)\circ\imath_X (h)\stackrel{(2)}{=}\theta_{\Delta X}(g, \imath_X (h))=\theta_{\Delta X}((\id\times \imath_X) (g, h)).$$
\end{proof}

Since $\imath_X (G)$ is an invariant subset of the $G$-space $(G, X^X, \theta_{\Delta X})$, $\cl (\imath_X (G))$ is an invariant subset and an Ellis compactification of $G$ if $X$ is compact.

Define a map $\mathfrak{E}: \mathbb{G} (G)\to \mathbb{E} (G)$ ({\it Ellis map} of posets). If $b G\in\mathbb{G}(G)$ and $\imath_{b G}: G\to (b G)^{b G}$ is an embedding, then 
$$\mathfrak{E}(b G)={\rm cl} (\imath_{b G} (G)).$$

\begin{thm}\label{thm1}
The Ellis map $\mathfrak{E}:\mathbb{G}(G)\to \mathbb{E} (G)$ has the following properties
\begin{itemize}
\item $\mathfrak{E}$ is an epimorphism {\rm(}of posets{\rm)}, 
\item $\forall\ b G\in \mathbb{G} (G)$  $\mathfrak{E}(b G)\geq b G$, 
\item $\mathfrak{E}\circ\mathfrak{E}=\mathfrak{E}$.
\end{itemize}
\end{thm}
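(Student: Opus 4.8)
The plan is to verify the three properties in order, using the explicit description of $\mathfrak{E}(bG)=\cl(\imath_{bG}(G))\subset (bG)^{bG}$ as the Ellis enveloping semigroup of the diagonal action $\theta_{\Delta bG}$, together with the induced map of products $\Phi$ associated to a map of compactifications $\varphi\colon b'G\to bG$. First I would check that $\mathfrak{E}$ is well defined: for $bG\in\mathbb{G}(G)$, by Proposition~\ref{topcoex} the topology $\tau$ on $G$ equals $\tau_p$ for the extended action on $bG$, so $\imath_{bG}\colon G\to (bG)^{bG}$ is a topological embedding and a group homomorphism into the right topological monoid $((bG)^{bG},\circ)$; hence $\cl(\imath_{bG}(G))$ is a compact right topological monoid containing (a copy of) $G$ densely, and the diagonal action restricts to a continuous action of $G$ making it an Ellis compactification. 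One must also confirm that $G$ itself embeds, i.e. that $\mathfrak{E}(bG)$ is a \emph{proper} compactification; this follows since $\imath_{bG}$ is an embedding and its image lies inside $\mathfrak{E}(bG)$.

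Next, the inequality $\mathfrak{E}(bG)\geq bG$. The evaluation at the unit $e\in G$ extends continuously: the map $\operatorname{ev}_{\tilde e}\colon (bG)^{bG}\to bG$, $f\mapsto f(\tilde e)$ (where $\tilde e=b(e)$), is continuous in the Tychonoff topology, and on $\imath_{bG}(G)$ it sends $\imath_{bG}(g)$ to $\theta_{\Delta bG}$-evaluation $\tilde\alpha(g,\tilde e)=b(g)$. Thus $\operatorname{ev}_{\tilde e}$ restricted to $\mathfrak{E}(bG)$ is a continuous surjection onto $\cl(b(G))=bG$ that composes with $\imath_{bG}$ to give $b$; by definition of the order on compactifications this gives $\mathfrak{E}(bG)\geq bG$.

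For the morphism property, given $b'G\geq bG$ with map of compactifications $\varphi$, I would use the induced $G$-map $\Phi=\varphi_Y\circ\varphi^{b'X}\colon (b'G)^{b'G}\to (bG)^{bG}$ described just before the theorem. The key computation is that $\Phi\circ\imath_{b'G}=\imath_{bG}$ on $G$: indeed $\varphi^{b'G}(\imath_{b'G}(g))$ is the self-map $x\mapsto\varphi(\tilde\alpha'(g,x))=\tilde\alpha(g,\varphi(x))$ of $bG$ indexed by $b'G$, and precomposing with the identification $\varphi|_Y\colon Y\to X$ rewrites this as $\imath_{bG}(g)$. Since $\Phi$ is continuous and $\mathfrak{E}(bG)=\cl(\imath_{bG}(G))=\cl(\Phi(\imath_{b'G}(G)))\subset \Phi(\mathfrak{E}(b'G))\subset\mathfrak{E}(bG)$ (the last inclusion because $\Phi(\mathfrak{E}(b'G))$ is compact hence closed and contains the dense set $\imath_{bG}(G)$), the restriction $\Phi|_{\mathfrak{E}(b'G)}$ is a surjection onto $\mathfrak{E}(bG)$; it is a homomorphism of monoids and an equivariant map, and it realizes $\mathfrak{E}(bG)$ as a quotient, i.e. $\mathfrak{E}(b'G)\geq\mathfrak{E}(bG)$. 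Surjectivity of $\mathfrak{E}$ (epimorphism onto $\mathbb{E}(G)$) holds because an Ellis compactification $bG$ is already a right topological monoid in which $G$ embeds by $\tau_p$, so $\imath_{bG}$ maps $G$ onto a dense subsemigroup isomorphic (via evaluation at $\tilde e$) to $bG$ itself, giving $\mathfrak{E}(bG)\cong bG$ — which simultaneously proves idempotency $\mathfrak{E}\circ\mathfrak{E}=\mathfrak{E}$.

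\textbf{Main obstacle.} The delicate point is the identity $\mathfrak{E}(bG)\cong bG$ for $bG\in\mathbb{E}(G)$, i.e. that $\imath_{bG}$ and evaluation at $\tilde e$ are mutually inverse isomorphisms of right topological monoids between $\mathfrak{E}(bG)$ and $bG$. One direction — $\operatorname{ev}_{\tilde e}\circ\imath_{bG}=b$ — is immediate, but the other — that $\imath_{bG}\circ\operatorname{ev}_{\tilde e}=\id$ on all of $\mathfrak{E}(bG)$, not just on the dense subgroup — requires knowing that for $f\in\mathfrak{E}(bG)$ one has $f(x)=f(\tilde e)\bullet x$ for every $x\in bG$, which is the statement that the diagonal action of a limit of left translations is again left translation by a single semigroup element. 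This is where the monoid structure of $bG$ (an Ellis compactification, so $\bullet|_{G\times bG}=\tilde\alpha$ and $\bullet$ is right topological) must be used: for fixed $x$, both $f\mapsto f(x)$ and $f\mapsto f(\tilde e)\bullet x$ are continuous on $(bG)^{bG}$ (the latter using right continuity of $\bullet$), and they agree on $\imath_{bG}(G)$, hence on its closure. Getting this density/continuity argument cleanly stated is the crux; the rest of the proof is formal manipulation of the order on compactifications.
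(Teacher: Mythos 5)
Your proposal is correct and follows the paper's own decomposition: the same induced map $\Phi$ for monotonicity, the same evaluation map $\pr_{b(e)}$ for $\mathfrak{E}(b G)\geq b G$, and idempotency reduced to showing $b G\geq\mathfrak{E}(b G)$ for $b G\in\mathbb{E}(G)$. The one place you genuinely diverge is that last step. The paper proves that $\imath_{b G}\colon G\to\imath_{b G}(G)$ is uniformly continuous with respect to the subspace uniformity induced from $b G$ on the domain and from $(b G)^{b G}$ on the range --- using that each right translation $y\mapsto y\bullet t$ is a continuous, hence uniformly continuous, self-map of the compact space $b G$ --- and then extends $\imath_{b G}$ over the completions to obtain a map of compactifications $\tilde\imath_{b G}\colon b G\to\mathfrak{E}(b G)$. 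You instead show directly that every $f\in\mathfrak{E}(b G)=\cl\,(\imath_{b G}(G))$ satisfies $f(x)=f(b(e))\bullet x$ for all $x\in b G$, by comparing the two continuous maps $f\mapsto f(x)$ and $f\mapsto f(b(e))\bullet x$ (the latter continuous by right-topologicality of $\bullet$) on the dense subset $\imath_{b G}(G)$; this makes $\pr_{b(e)}|_{\mathfrak{E}(b G)}$ injective, hence a homeomorphism onto $b G$, and yields $\mathfrak{E}(b G)=b G$ in one stroke. The two arguments rest on the same underlying fact ($\imath_{b G}(g)(t)=b(g)\bullet t$ together with continuity of right translations and Remark~\ref{tscomp}(d)), but yours avoids the uniformity machinery and is the more standard enveloping-semigroup argument, while the paper's version constructs the inverse map explicitly as a uniformly continuous extension. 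Both are complete; only minor phrasing in your epimorphism sentence (it is $\mathfrak{E}(b G)$, not $\imath_{b G}(G)$, that is isomorphic to $b G$ via evaluation) would need tightening.
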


\begin{proof}
(i)  $\mathfrak{E}$ is morphism.  Let $(b' G, b')$, $(b G, b)$ be $G$-compactifications of $G$, $\varphi: b' G\to b G$ is the $G$-map of $G$-compactifications ($\tilde\alpha$ and $\tilde\alpha'$ are actions of $G$ on $b G$ and $b' G$ respectively). The $G$-map $\varphi$ induces the $G$-map  $\Phi:  (b' G)^{b' G}\to  (b G)^{b G}$ and the following diagram 
$$\begin{array}{ccl}
(b' G)^{b' G} & \stackrel{\Phi}{\longrightarrow} &  (b G)^{b G} \\
\imath_{b' G} \nwarrow &   &  \nearrow \imath_{b G} \\
 & G & 
\end{array}$$
is commutative.  In fact, for any  $h\in G$ 
$$\imath_{b G} (g)(b(h))=\tilde\alpha (g, b(h))=b(\alpha (g, h))\ \mbox{and}$$ 
$$[(\Phi\circ\imath_{b' G}) (g)] (b'(h))=\pr_{b'(h)}\big((\Phi\circ\imath_{b' G}) (g)\big)=\pr_{b'(h)}\big((\varphi_{Y}\circ\varphi^{b' G}\circ \imath_{b' G})(g)\big)=$$
\centerline{($\pr_{b'(h)}\circ\varphi_{Y}=\pr_{b(h)}$)}
$$=\pr_{b(h)}\big((\varphi^{b' G}\circ \imath_{b' G})(g)\big)=\varphi\big(\imath_{b' G} (g)(b'(h))\big)=\varphi\big(\tilde\alpha'(g, b'(h))\big)=$$
\centerline{($\varphi$ is a $G$-map)} 
$$=\tilde\alpha(g, \varphi (b'(h)))=\tilde\alpha(g, b(h))\stackrel{h\in G}=b(\alpha(g, h)).$$
Thus, $\imath_{b G} (g)(t)=(\Phi\circ\imath_{b' G}) (g)(t)$, $t\in b (G)$.  Being continuous they coincide on $b G$ and  $\imath_{b G} (g)=(\Phi\circ\imath_{b' G}) (g)$, $g\in G$.

Continuity of $\Phi$, commutativity of the above diagram ($\Phi|_{\imath_{b' G}}$ is a homeomorphism) and compactness of $\cl~\imath_{b' G} (G)$, $\cl~\imath_{b G} (G)$ yields that the restriction $\Phi|_{\mathfrak{E}(b' G)}: \mathfrak{E}(b' G)\to \mathfrak{E}(b G)$ is a map of $G$-compactifications. Hence, $\mathfrak{E}(b' G)\geq\mathfrak{E}(b G)$ and $\mathfrak{E}$ is a morphism of posets. 

\medskip

(ii) Let $(b G, b)$ be $G$-compactification of $G$. The projection $\pr_{e}: b G^{b G}\to b G=\{e\}\times b G$ is continuous and 
$$\pr_{b(e)}\circ\imath_{b G}=b.$$
Indeed, $\pr_{b(e)}\circ\imath_{b G}(g)=\imath_{b G}(g)(b(e))=\tilde\alpha (g, b(e))=b(g)$, $g\in G$. The restriction of $\pr_{b(e)}$ to $\imath_{b G} (G)$ is a homeomorphism. Hence,  $\pr_{b(e)}|_{\mathfrak{E}(b G)}$ is the map of compactifications and $\mathfrak{E}(b G)\geq b G$. 

\medskip

(iii) By the previous $\mathfrak{E}(\mathfrak{E}(b G))\geq\mathfrak{E} (b G)$ and   $\mathfrak{E}(b G)\in\mathbb{E} (G)$. It remains to prove that if $(b G, b)$ is an Ellis compactification of $G$, then $b G\geq\mathfrak{E}(b G)$. 

Let $\tilde\alpha: G\times bG\to bG$ be the extension of action $\alpha$, $\imath_{b G}: G\to (bG)^{bG}$ is an embedding. 

Since $(bG, \bullet)$ is a right topological monoid and continuous multiplication on the right is uniformly continuous on the compact space $b G$ (with the unique uniformity $\tilde{\mathcal U}$), from 
$$\imath_{b G} (g)(t)=\tilde\alpha (g, t)=b (g)\bullet t,\ t\in bG,\ g\in G,$$
it follows that for any $t\in bG$ and   for any ${\rm U}\in \tilde{\mathcal U}$ there exists ${\rm V}\in \tilde{\mathcal U}$ such that 
$$\mbox{if for}\ f, g\in G,\ (f, g)\in {\rm V},\ \mbox{then}\ (b (f)\bullet t, b (g)\bullet t)\in {\rm U}.$$
Therefore, for any entourage $\hat {\rm U}=\{ (f, g)\in G\times G\ |\ (\imath_{b G} (f)(t)=b (f)\bullet t, \imath_{b G} (g)(t)=b (g)\bullet t)\in {\rm U}\}$, where $t\in bG$, ${\rm U}\in\tilde{\mathcal U}$ (from the subbase of the subspace uniformty on $\imath_{b G}(G)$ as a subspace of $b G^{b G}$), there exists  ${\rm V}\in\tilde{\mathcal U}$ such that 
$$\mbox{if}\ (f, g)\in {\rm V},\ \mbox{then}\  (f, g)\in \hat {\rm U}$$
and the map $\imath_{b G}: G\to\imath_{b G} (G)$ is uniformly continuous. Therefore,  $\imath_{b G}$ can be extended to the map of compactifications $\tilde\imath_{b G}: b G\to\mathfrak{E}(b G)$. Hence, $b G\geq\mathfrak{E}(b G)$ and $\mathfrak{E}\circ\mathfrak{E}=\mathfrak{E}$.

\medskip

(i) $\mathfrak{E}$ is an epimorphism. If $b G\in\mathbb{E}(G)$, then $b G=\mathfrak{E}(b G)$. Indeed, by (i) $b G\leq\mathfrak{E}(b G)$, by (ii) $b G\geq\mathfrak{E}(b G)$. Hence, $b G=\mathfrak{E}(b G)$. From this it follows that $\mathfrak{E}$ is an epimorphism. 
\end{proof}

\begin{cor}\label{idemcoin}
$b G\in\mathbb E (G)$ iff $\mathfrak{E}(b G)=b G$.
\end{cor}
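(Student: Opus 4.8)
The plan is to derive the corollary directly from Theorem~\ref{thm1}, since both directions are essentially already contained in its proof. First I would observe that the statement is the conjunction of two implications, and that the idempotency property $\mathfrak{E}\circ\mathfrak{E}=\mathfrak{E}$ established in the theorem is exactly what makes the equivalence work.

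For the forward direction, suppose $bG\in\mathbb{E}(G)$, i.e. $bG$ is an Ellis compactification of $G$. In step~(iii) of the proof of Theorem~\ref{thm1} it was shown that whenever $bG$ is an Ellis compactification, the embedding $\imath_{bG}\colon G\to (bG)^{bG}$ is uniformly continuous with respect to the unique uniformity on $bG$, hence extends to a map of compactifications $bG\to\mathfrak{E}(bG)$, giving $bG\geq\mathfrak{E}(bG)$; combined with $\mathfrak{E}(bG)\geq bG$ from step~(ii), this yields $\mathfrak{E}(bG)=bG$. (Alternatively, one simply invokes that $\mathfrak{E}$ is an epimorphism: since $bG\in\mathbb{E}(G)$ lies in the codomain and $\mathfrak{E}$ restricted to $\mathbb{E}(G)$ is the identity by idempotency, $\mathfrak{E}(bG)=bG$.)

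For the reverse direction, suppose $\mathfrak{E}(bG)=bG$. By the very definition of the Ellis map, $\mathfrak{E}(bG)=\cl(\imath_{bG}(G))$ is an Ellis compactification of $G$ — indeed, $\imath_{bG}(G)$ is an invariant subsemigroup of the right topological monoid $((bG)^{bG},\circ)$, so its closure is a compact right topological monoid containing (an isomorphic copy of) $G$, and the diagonal action restricts to the extended left action, so $\mathfrak{E}(bG)\in\mathbb{E}(G)$. Since $\mathfrak{E}(bG)=bG$, we conclude $bG\in\mathbb{E}(G)$.

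There is no real obstacle here: the corollary is a packaging of facts already proved, the only mild point being to make sure that the range of $\mathfrak{E}$ genuinely lands in $\mathbb{E}(G)$, which is immediate from the construction preceding the theorem. Thus the proof is short.

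\begin{proof}
If $bG\in\mathbb{E}(G)$, then by Theorem~\ref{thm1} the map $\mathfrak{E}$ is an epimorphism onto $\mathbb{E}(G)$ with $\mathfrak{E}\circ\mathfrak{E}=\mathfrak{E}$, so $\mathfrak{E}(bG)=bG$ (this is exactly what was shown in the final paragraph of the proof of Theorem~\ref{thm1}). Conversely, for any $bG\in\mathbb{G}(G)$ the compactification $\mathfrak{E}(bG)=\cl(\imath_{bG}(G))$ belongs to $\mathbb{E}(G)$ by construction; hence if $\mathfrak{E}(bG)=bG$, then $bG\in\mathbb{E}(G)$.
\end{proof}
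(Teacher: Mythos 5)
Your proof is correct and matches the paper's (implicit) argument: the paper states the corollary without proof precisely because the forward direction is the final paragraph of the proof of Theorem~\ref{thm1}, and the reverse direction is immediate from the fact, noted just before the definition of $\mathfrak{E}$, that $\cl(\imath_{bG}(G))$ is always an Ellis compactification. Nothing is missing.
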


\begin{rem}
{\rm The extended variant of Theorem~\ref{thm1} is in~\cite[Theorem 2.22]{KozlovSorin}.}
\end{rem}


\section{$G^*$-compactifications as subsets of spaces of closed binary relations}

\subsection{Hyperspace $2^{X\times X}$ as a space of closed binary relations}

On the product $X\times X$ the symmetry $s_X(x, y)=(y, x)$ is a self-inverse map.

\begin{pro}\label{actionXX}
Let $(G, X, \theta)$ be a $G$-space. 
$$\theta_{\uparrow}: G\times (X\times X)\to X\times X,\ \theta_{\uparrow} (g, (x, y))=(x, \theta(g, y)),$$ 
$$\theta_{\rightarrow}: G\times (X\times X)\to X\times X,\ \theta_{\rightarrow} (g, (x, y))=(\theta (g, x), y),\ g\in G,\ (x, y)\in X\times X,$$
are {\rm(}continuous{\rm)} actions on $X\times X$, $\theta_{\rightarrow}=\theta_{\uparrow}^*$. 
\end{pro}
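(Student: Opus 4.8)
The plan is to verify the two action axioms for $\theta_{\uparrow}$ by a direct computation, to observe that continuity is automatic from continuity of $\theta$ and of the coordinate projections, to obtain the analogous facts for $\theta_{\rightarrow}$ by symmetry, and finally to identify $\theta_{\rightarrow}$ with the inverse action $\theta_{\uparrow}^{*}$ associated with the self-inverse map $s_X(x,y)=(y,x)$ via Lemma~\ref{invaction}.

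First I would check that $\theta_{\uparrow}$ is an action. The unit axiom is immediate: $\theta_{\uparrow}(e,(x,y))=(x,\theta(e,y))=(x,y)$ for every $(x,y)\in X\times X$, since $\theta$ is an action. For compatibility, for $g,h\in G$ and $(x,y)\in X\times X$ one has
$$\theta_{\uparrow}(gh,(x,y))=(x,\theta(gh,y))=(x,\theta(g,\theta(h,y)))=\theta_{\uparrow}(g,(x,\theta(h,y)))=\theta_{\uparrow}(g,\theta_{\uparrow}(h,(x,y))).$$
Continuity of $\theta_{\uparrow}$ follows because its first coordinate function is the projection $(g,(x,y))\mapsto x$ and its second coordinate function is the composition of $(g,(x,y))\mapsto(g,y)$ with $\theta$, both continuous. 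The same three checks, with the action placed in the first coordinate instead of the second, show directly that $\theta_{\rightarrow}$ is a continuous action; alternatively this follows for free once the last identity is proved.

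It remains to establish $\theta_{\rightarrow}=\theta_{\uparrow}^{*}$. Since $s_X(x,y)=(y,x)$ is a self-inverse map on $X\times X$ (noted just above the statement), Lemma~\ref{invaction} applies: the inverse action $\theta_{\uparrow}^{*}(g,z)=s_X(\theta_{\uparrow}(g,s_X(z)))$ is a well-defined continuous action. Evaluating at $z=(x,y)$,
$$\theta_{\uparrow}^{*}(g,(x,y))=s_X(\theta_{\uparrow}(g,s_X(x,y)))=s_X(\theta_{\uparrow}(g,(y,x)))=s_X((y,\theta(g,x)))=(\theta(g,x),y)=\theta_{\rightarrow}(g,(x,y)),$$
which is exactly the claimed equality; in particular $\theta_{\rightarrow}$ is a continuous action by Lemma~\ref{invaction}.

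I do not expect any genuine obstacle here, as the statement is a routine verification. The only point that demands a little care is bookkeeping the order of application of $s_X$ in the definition of the inverse action --- first $s_X$, then $\theta_{\uparrow}(g,\cdot)$, then $s_X$ again --- and making sure the continuity claims are explicitly reduced to continuity of $\theta$ and of the projections rather than simply asserted.
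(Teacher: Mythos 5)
Your proposal is correct and follows essentially the same route as the paper: the paper also declares the action axioms and continuity to be routine and then establishes $\theta_{\rightarrow}=\theta_{\uparrow}^{*}$ by the identical computation $s_X(\theta_{\uparrow}(g,s_X(x,y)))=(\theta(g,x),y)$. You merely spell out the "easy to verify" steps that the paper leaves implicit.
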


\begin{proof}
It is easy to verify that the actions are correctly defined. Their continuity (in product topology) is evident. 
$$\theta_{\uparrow}^*(g, (x, y))=s_X(\theta_{\uparrow}(g, s_X((x, y)))=s_X(\theta_{\uparrow}(g, (y, x))=s_X((y, \theta (g, x)))=$$
$$=(\theta (g, x), y)=\theta_{\rightarrow}(x, y),\ g\in G,\ (x, y)\in X\times X.\ \mbox{Hence},\ \theta_{\rightarrow}=\theta_{\uparrow}^*.$$
\end{proof}

A {\it relation on $X$} is a subset of $X\times X$.  If $R, S$ are relations on $X$, then the {\it composition} of $R$ and $S$ (from the right) is the relation $RS=\{(x,y)\ |\ \exists z\ ((x,z)\in S\ \&\ (z,y)\in R)\}$. The symmetry $s_X$ on $X\times X$ induces the self-inverse map $H_{s_X}$ on relations on $X$
$$H_{s_X}: R\to s_X(R)=\{(x, y)\in X\times X\ |\ (y, x)\in R\}.\eqno{(*)}$$ 
The set of all relations on $X$ is a monoid (identity is the relation $\Delta_X=\{(x, x)\ |\ x\in X\}$) with involution $H_{s_X}$~\cite{CliffPrest}. 

A {\it closed relation} on a topological space $X$ is a closed subset of $X\times X$, and the set of all closed relations (including the empty set) can be identified with $2^{X\times X}$. 

From Propositions~\ref{actionXX} and~\ref{actionhyper}, \ref{hyperinversemap} it follows. 

\begin{cor}\label{actionXXYY}
If $(G, X, \theta)$ is a compact $G$-space, then the induced actions 
$$\theta_{\uparrow}^H  (g, R)=\{\theta_{\uparrow}(g, (x, y))=(x, \theta (g, y))\ |\ (x, y)\in R\}\ \mbox{and}\eqno{(\theta_{\uparrow}^H)}$$ 
$$\theta_{\rightarrow}^H (g, R)=\{\theta_{\rightarrow}(g, (x, y))=(\theta (g, x), y)\ |\ (x, y)\in R\},\ R\in 2^{X\times X}, \eqno{(\theta_{\rightarrow}^H)}$$ 
and the self-inverse map $H_{s_X}$ on $(2^{X\times X}, \tau_V)$ are continuous and $(\theta_{\uparrow}^H)^*=\theta_{\rightarrow}^H$.
\end{cor}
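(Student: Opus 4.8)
The plan is to derive Corollary~\ref{actionXXYY} as a direct combination of the three propositions it cites, applied to the product space $X \times X$ with its symmetry $s_X$. First I would invoke Proposition~\ref{actionXX}: since $(G, X, \theta)$ is a compact $G$-space, the product $X \times X$ is compact, and $\theta_{\uparrow}$ and $\theta_{\rightarrow}$ are continuous actions on it with $\theta_{\rightarrow} = \theta_{\uparrow}^*$. So we may regard $(G, X \times X, \theta_{\uparrow})$ as a compact $G$-space equipped with the self-inverse map $s_X(x,y) = (y,x)$.

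Next I would apply Proposition~\ref{actionhyper} (action on a hyperspace) to the compact $G$-space $(G, X \times X, \theta_{\uparrow})$: this gives that the induced action $(\theta_{\uparrow})^H$ on $2^{X \times X}$, defined by $(\theta_{\uparrow})^H(g, R) = \{\theta_{\uparrow}(g, (x,y)) \mid (x,y) \in R\}$, is continuous and $(G, 2^{X \times X}, (\theta_{\uparrow})^H)$ is a $G$-space. Applying the same proposition to $(G, X \times X, \theta_{\rightarrow})$ yields continuity of $(\theta_{\rightarrow})^H$. Spelling out the definition of $\theta_{\uparrow}$ and $\theta_{\rightarrow}$ gives exactly the formulas $(\theta_{\uparrow}^H)$ and $(\theta_{\rightarrow}^H)$ in the statement. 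For the continuity of the self-inverse map $H_{s_X}$ on $(2^{X \times X}, \tau_V)$: since $X \times X$ is compact Hausdorff, $s_X$ is a homeomorphism of $X \times X$, so by the remark preceding Proposition~\ref{actionhyper} (every homeomorphism $\varphi$ of a compact space induces a homeomorphism $H_\varphi$ of $2^X$, and a self-inverse map induces a self-inverse map) $H_{s_X}$ is a continuous self-inverse map of $2^{X \times X}$.

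Finally, for the identity $(\theta_{\uparrow}^H)^* = \theta_{\rightarrow}^H$, I would apply Proposition~\ref{hyperinversemap}: for the compact $G$-space $(G, X \times X, \theta_{\uparrow})$ with self-inverse map $s_X$, one has $(\theta_{\uparrow}^H)^* = (\theta_{\uparrow}^*)^H = (\theta_{\rightarrow})^H$, where the last equality is the relation $\theta_{\uparrow}^* = \theta_{\rightarrow}$ from Proposition~\ref{actionXX}. Here one should note that the self-inverse map on the hyperspace $2^{X\times X}$ induced by $s_X$ is precisely $H_{s_X}$, matching the hypotheses of Proposition~\ref{hyperinversemap}.

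I do not anticipate a genuine obstacle here: the corollary is purely a bookkeeping assembly of already-established facts, and the only point requiring mild care is to confirm that the self-inverse map appearing implicitly in Propositions~\ref{actionhyper} and~\ref{hyperinversemap} (applied on the product $X \times X$) is literally $H_{s_X}$ as defined in $(*)$, and that the definitions of $\theta_{\uparrow}^H$ and $\theta_{\rightarrow}^H$ unwind to the displayed formulas. This is immediate from the definition $H_{s_X}(R) = s_X(R)$ and the definition of the induced action $\theta^H(g, A) = \theta(g, A)$.
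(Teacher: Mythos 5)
Your proposal is correct and follows exactly the route the paper intends: the corollary is stated as a direct consequence of Propositions~\ref{actionXX}, \ref{actionhyper} and \ref{hyperinversemap}, and you assemble them in precisely that way, including the observation that the homeomorphism $s_X$ of the compact space $X\times X$ induces the continuous self-inverse map $H_{s_X}$ on the hyperspace. Nothing is missing.
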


\begin{rem}\label{semrel}
{\rm For a metrizable compact space $X$ the continuity of the actions $\theta_{\uparrow}^H$, $\theta_{\rightarrow}^H$ and the self-inverse map $H_{s_X}$ is proved in~\cite{Kennedy}.

If $X$ is a compact space, then $(2^{X\times X}, \tau_V)$ is compact and monoid with continuous involution $H_{s_X}$~\cite{usp2001}. 

\medskip

In general, the composition $(R, S)\to RS$ in $(2^{X\times X}, \tau_V)$ is not continuous on the left (and, hence, on the right). Indeed, let $X=[0, 1]$, $R=([0, \frac12]\times\{0\})\cup ([\frac12, 1]\times\{1\})\subset X\times X$, $S=\{0\}\times [0, \frac12]\subset X\times X$ be closed relations on $X$. Then $RS=\{(0, 0), (0, 1)\}\subset X\times X$. 

Fix $R$ and let $W=[V_1, V_2]$ be a nbd of $RS$, where $V_1=X\times [0, \frac12)$, $V_2=X\times (\frac12, 1]$. Any nbd of $S$ contains the set $S'=\{0\}\times [0, a]$ for some $a<\frac12$, and $RS'=\{(0, 0)\}$. $RS'\not\in W$ and composition is not continuous on the left. }
\end{rem} 

Below $2^{X\times X}=(2^{X\times X}, \tau_V)$. 


\subsection{Maps of hyperspaces $2^{X\times X}$}

\begin{pro}\label{mapshyper} 
{\rm (A)} If $X$, $Y$ are compact spaces and $\varphi: X\to Y$ is an onto map, then 
$$H_{\Phi}: 2^{X\times X}\to 2^{Y\times Y},\  H_{\Phi}(R)=(\Phi=\varphi\times\varphi) (R),\  R\in 2^{X\times X},$$ 
is an onto, perfect map, commutes with self-maps $H_{s_X}$ and $H_{s_Y}$ and homomorphism of monoids with involutions $H_{s_X}$ and $H_{s_Y}$.

\medskip

{\rm (B)} If $(G, X, \theta_X)$, $(G, Y, \theta_Y)$ are compact $G$-spaces and $\varphi: X\to Y$ is an onto $G$-map, then $H_{\Phi}$ is a $G$-map of $G$-spaces $(G, 2^{X\times X},  \theta_{X\uparrow}^{H})$, $(G, 2^{Y\times Y}, \theta_{Y\uparrow}^{H})$ and $G$-spaces $(G, 2^{X\times X},  \theta_{X\rightarrow}^{H})$, $(G, 2^{Y\times Y}, \theta_{Y\rightarrow}^{H})$. 
\end{pro}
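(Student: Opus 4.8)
The plan is to reduce everything to results already established for single hyperspaces, applied to the product space $X \times X$. Observe first that $\Phi = \varphi \times \varphi : X \times X \to Y \times Y$ is itself an onto map of compact spaces, so $H_\Phi$ in the notation of this proposition is exactly the map $H_{\Phi} : 2^{X\times X} \to 2^{Y\times Y}$ of hyperspaces induced by the onto map $\Phi$ in the sense of Proposition~\ref{maphyperF}(A) (with $X\times X$, $Y\times Y$ in place of $X$, $Y$ there). Hence $H_\Phi$ is automatically onto and perfect. This disposes of the topological half of (A) with no further work.

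For the remaining assertions of (A), I would verify two compatibilities. First, that $\Phi$ commutes with the symmetries $s_X$ on $X\times X$ and $s_Y$ on $Y\times Y$: this is the immediate computation $\Phi(s_X(x,y)) = \Phi(y,x) = (\varphi(y),\varphi(x)) = s_Y(\varphi(x),\varphi(y)) = s_Y(\Phi(x,y))$. Then Proposition~\ref{maphyperF}(C) (again applied to $X\times X$, $Y\times Y$) gives that $H_\Phi$ commutes with $H_{s_X}$ and $H_{s_Y}$. Second, that $H_\Phi$ is a homomorphism of monoids of binary relations with respect to composition: one checks $H_\Phi(\Delta_X) = \Phi(\Delta_X) = \Delta_Y$ since $\varphi$ is onto, and $H_\Phi(RS) = H_\Phi(R)\,H_\Phi(S)$ for closed relations $R, S$ on $X$ — here the containment $\Phi(RS)\subseteq \Phi(R)\Phi(S)$ is formal from the definition of composition, while the reverse containment uses surjectivity of $\varphi$ (given $(y,z)\in S$-image and $(z,y')\in R$-image one lifts $z$ through $\varphi$). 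Combining this with the commutation with involutions gives that $H_\Phi$ is a homomorphism of monoids with involution.

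For (B), the argument is the same pattern: since $\varphi$ is an onto $G$-map of compact $G$-spaces $X$, $Y$, Proposition~\ref{actionXX} endows $X\times X$ and $Y\times Y$ with the actions $\theta_{X\uparrow}$, $\theta_{X\rightarrow}$, etc., and one checks directly that $\Phi=\varphi\times\varphi$ is a $G$-map for the $\uparrow$-actions (and hence, by Lemma~\ref{ginversg} together with $\theta_{\rightarrow} = \theta_{\uparrow}^*$, also for the $\rightarrow$-actions), using only $\varphi\circ\theta_X = \theta_Y\circ(\id\times\varphi)$. Then Proposition~\ref{maphyperF}(B) applied to $\Phi$ yields that $H_\Phi$ is a $G$-map of the induced $G$-spaces $(G, 2^{X\times X}, \theta_{X\uparrow}^H)$, $(G, 2^{Y\times Y}, \theta_{Y\uparrow}^H)$, and likewise for the $\rightarrow$-versions. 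Alternatively one may invoke Proposition~\ref{maphyperF}(D) directly once the commutation with symmetries is in hand.

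The steps here are all routine; the only point requiring a little care — and the place I would expect a careless proof to stumble — is the verification that $H_\Phi$ respects composition of relations, specifically the non-obvious inclusion $\Phi(R)\Phi(S)\subseteq\Phi(RS)$, which genuinely needs the surjectivity of $\varphi$ (and would fail for a general $G$-map). Everything else is an application of Propositions~\ref{maphyperF} and~\ref{actionXX} to the auxiliary space $X\times X$.
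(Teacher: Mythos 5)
Your proposal follows the same route as the paper's proof: the topological assertions of (A) and the commutation with $H_{s_X}$, $H_{s_Y}$ are obtained by applying Proposition~\ref{maphyperF} to the onto map $\Phi=\varphi\times\varphi$ of the compact spaces $X\times X$ and $Y\times Y$; the monoid‑homomorphism property is checked by a direct computation on compositions of relations together with $H_{\Phi}(\Delta_X)=\Delta_Y$; and (B) is reduced to Proposition~\ref{maphyperF}(B) and the description of the actions $\theta_{\uparrow}$, $\theta_{\rightarrow}$ (Corollary~\ref{actionXXYY}). Structurally you and the authors agree at every point.

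However, the one step you yourself single out as delicate is not actually closed by the argument you sketch. For the inclusion $\Phi(R)\Phi(S)\subseteq\Phi(RS)$ you propose to ``lift $z$ through $\varphi$'', but the data available are a pair $(x,z)\in S$ and a pair $(z',y)\in R$ with only $\varphi(z)=\varphi(z')$; surjectivity of $\varphi$ gives no common middle point in $X$, and without one there is no element of $RS$ mapping onto $(\varphi(x),\varphi(y))$. The inclusion can genuinely fail: take $X=\{1,2\}$ discrete, $Y$ a singleton, $\varphi$ the constant map, $S=\{(1,1)\}$, $R=\{(2,2)\}$; then $RS=\emptyset$ while $\Phi(R)\Phi(S)$ is the nonempty diagonal of $Y\times Y$. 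Only the inclusion $H_{\Phi}(RS)\subseteq H_{\Phi}(R)H_{\Phi}(S)$ is formal. To be fair, the paper's displayed chain of equalities makes exactly the same silent identification of the two middle points, so this is a gap you share with the source rather than one you introduced; a correct treatment would either restrict the claim to the subsets actually used later (closures of sets of graphs, where the needed lifts exist) or add a hypothesis such as injectivity of $\varphi$. Everything else in your proposal is fine.
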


\begin{proof}
The first statement of (A) follows from  Proposition~\ref{maphyperF}. $2^{X\times X}$ and $2^{Y\times Y}$ are monoids with involutions $H_{s_X}$ and $H_{s_Y}$. For $R, S\in 2^{X\times X}$ 
$$H_{\Phi} (RS)=\Phi (RS)=\{(\varphi (x), \varphi (y))\ |\ \exists z\ ((x,z)\in S\ \mbox{and}\ (z,y)\in R)\}=$$
$$=\{(\varphi (x), \varphi (y))\ |\ \exists \varphi (z)\ ((\varphi (x), \varphi (z))\in\Phi (S)\ \mbox{and}\ (\varphi (z), \varphi (y))\in\Phi (R))\}=\Phi (R)\Phi (S)=H_{\Phi}(R)H_{\Phi}(S)$$
and $H_{\Phi}(\Delta_X)=\Delta_Y$. Hence, $H_{\Phi}$ is a homomorphism of monoids with involutions.

(B) follows from Proposition~\ref{maphyperF} and Corollary~\ref{actionXXYY}. 
\end{proof}


\subsection{Embedding of a group in the hyperspace $2^{X\times X}$} 

Let $G$ be a group of homeomorphisms of a compact space $X$. The injective map 
$$\i_{\Gamma}: G\to 2^{X\times X},\ \i_{\Gamma}(g)=\Gamma (g)=\{(x, g(x))\ |\ x\in X\}\ (\mbox{\it graph of}\ g),\ g\in G,$$ 
is defined. On $\i_{\Gamma}(G)$, multiplication is the restriction of composition in $2^{X\times X}$, involution is the restriction of the self-inverse map $H_{s_X}$.  

\begin{pro}\label{homomF}
If $(G=(G, \tau_{co}), X, \theta)$ is a compact $G$-space, then the map $\i_{\Gamma}: G\to 2^{X\times X}$ is
\begin{itemize}
\item[(a)]  a $G$-map of $G$-spaces $(G, G, \alpha)$ and $(G, 2^{X\times X}, \theta^H_{\uparrow})$ and commutes with involution $*$ on $G$ and self inverse map $H_{s_X}$ on $2^{X\times X}$,
\item[(b)] topological isomorphism of groups $G$ and $\i_{\Gamma}(G)$,
\item[(c)] uniformly continuous with respect to the Roelcke uniformity $L\wedge R$ on $G$ and the unique uniformity on compact space $2^{X\times X}$.
\end{itemize}
\end{pro}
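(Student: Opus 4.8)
The plan is to prove (a), (b), (c) in that order and to deduce (c) from Corollary~\ref{cor1} via (a). For (a), both identities are checked directly on graphs. Since $G$ acts on itself by left translation, for $g,h\in G$ one has $\i_{\Gamma}(\alpha(g,h))=\Gamma(gh)=\{(x,g(h(x)))\mid x\in X\}$ and $\theta^H_{\uparrow}(g,\i_{\Gamma}(h))=\{(x,\theta(g,y))\mid (x,y)\in\Gamma(h)\}=\{(x,g(h(x)))\mid x\in X\}$, so the two coincide; since $(G,2^{X\times X},\theta^H_{\uparrow})$ is a $G$-space by Corollary~\ref{actionXXYY}, $\i_{\Gamma}$ is a $G$-map. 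As each $g$ is a homeomorphism, $H_{s_X}(\Gamma(g))=\{(x,y)\mid(y,x)\in\Gamma(g)\}=\{(g(y),y)\mid y\in X\}=\{(z,g^{-1}(z))\mid z\in X\}=\Gamma(g^{-1})=\i_{\Gamma}(*(g))$, which is the required commutation with involutions.

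For (b), first the algebraic part: by the definition of composition of relations, $\i_{\Gamma}(g)\,\i_{\Gamma}(h)=\Gamma(g)\Gamma(h)=\{(x,y)\mid\exists z\,((x,z)\in\Gamma(h)\ \&\ (z,y)\in\Gamma(g))\}=\{(x,g(h(x)))\mid x\in X\}=\Gamma(gh)$, so $\i_{\Gamma}$ is a homomorphism for composition; together with the commutation with involutions from (a) and the injectivity of $\i_{\Gamma}$, this makes $\i_{\Gamma}$ an isomorphism of $G$ onto the group $\i_{\Gamma}(G)$, whose unit is $\Delta_X=\Gamma(e)$. For the topological part I would treat continuity of $\i_{\Gamma}$ and of its inverse separately. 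Continuity: the map $\psi:G\times X\to X\times X$, $\psi(g,x)=(x,\theta(g,x))$, is continuous and $\i_{\Gamma}(g)=\psi(\{g\}\times X)$; since $X$ is compact, $\i_{\Gamma}^{-1}(W^{+})=\{g\mid\{g\}\times X\subseteq\psi^{-1}(W)\}$ is $\tau_{co}$-open by the tube lemma, and $\i_{\Gamma}^{-1}(W^{-})=\pr_{G}(\psi^{-1}(W))$ is $\tau_{co}$-open because projections are open maps, so $\i_{\Gamma}$ is continuous into the Vietoris topology. Openness onto the image: for a subbasic $\tau_{co}$-open set $[K,O]$ with $K$ compact and $O$ open in $X$, one has $g\in[K,O]\Longleftrightarrow\Gamma(g)\cap(K\times(X\setminus O))=\emptyset\Longleftrightarrow\Gamma(g)\subseteq W$, where $W=(X\times X)\setminus(K\times(X\setminus O))$ is open, hence $\i_{\Gamma}([K,O])=\i_{\Gamma}(G)\cap W^{+}$ is relatively open; since $\i_{\Gamma}$ is injective it preserves finite intersections and unions, so it carries every $\tau_{co}$-open set to a relatively open set. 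Thus $\i_{\Gamma}$ is a homeomorphism onto $\i_{\Gamma}(G)$, hence a topological isomorphism of groups.

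For (c), I would apply Corollary~\ref{cor1} to the $G$-space $(G,2^{X\times X},\theta^H_{\uparrow})$ with self-inverse map $H_{s_X}$. By (a), $\i_{\Gamma}$ is a $G$-map of $(G,G,\alpha)$ and $(G,2^{X\times X},\theta^H_{\uparrow})$ commuting with $*$ and $H_{s_X}$; by Corollary~\ref{actionXXYY}, $(\theta^H_{\uparrow})^{*}=\theta^H_{\rightarrow}$ is again a continuous action. It remains to verify the uniform hypothesis of Corollary~\ref{cor1}: that $2^{X\times X}$ is $G$-Tychonoff and its unique uniformity is an equiuniformity for both $\theta^H_{\uparrow}$ and $\theta^H_{\rightarrow}$. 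Since $X\times X$ is compact, $2^{X\times X}$ is compact Hausdorff, hence carries a unique uniformity, which is automatically saturated because every homeomorphism of a compact space is uniformly continuous with respect to it; boundedness of a continuous action on a compact space follows from continuity of the action via the tube lemma, so the unique uniformity is an equiuniformity for both continuous actions $\theta^H_{\uparrow}$ and $\theta^H_{\rightarrow}$. Corollary~\ref{cor1} then yields that $\i_{\Gamma}:(G,L\wedge R)\to 2^{X\times X}$ is uniformly continuous with respect to the unique uniformity on $2^{X\times X}$, which is (c).

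The hard part — or rather the only step that is not routine bookkeeping with graphs, composition of relations and the Vietoris topology — will be the assertion used in (c) that a continuous action on a compact space is bounded, equivalently that the unique uniformity of the compact $G$-space $2^{X\times X}$ is an equiuniformity. If one prefers not to invoke this as a known fact, it can be checked directly: given an entourage $\mathcal{V}$ of $2^{X\times X}$, continuity of the map $(g,R)\mapsto(R,\theta^H_{\uparrow}(g,R))$ on $G\times 2^{X\times X}$, which carries $\{e\}\times 2^{X\times X}$ into $\mathcal{V}$, together with compactness of $2^{X\times X}$ and the tube lemma, produces $O\in N_{G}(e)$ with $\theta^H_{\uparrow}(O,R)\subseteq\mathcal{V}[R]$ for all $R$, from which boundedness follows.
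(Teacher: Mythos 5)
Your proposal is correct, and parts (a) and (c) follow essentially the paper's route: (a) is the same direct computation on graphs, and (c) is deduced from Corollary~\ref{cor1} exactly as in the paper (you merely spell out the verification that the unique uniformity of the compact space $2^{X\times X}$ is an equiuniformity for $\theta^H_{\uparrow}$ and $\theta^H_{\rightarrow}$, which the paper leaves implicit). The genuine divergence is in the topological half of (b). The paper identifies $\tau_{co}$ with the topology of uniform convergence, writes down the entourages $\hat{\rm U}$ and the Hausdorff-type entourages $\CL^{u^2}$ on the hyperspace, and compares them: closeness in uniform convergence forces closeness of graphs (continuity), while a star-refinement argument using uniform continuity of each $h\in G$ together with Lemma~\ref{cont} gives continuity of the inverse on $\i_{\Gamma}(G)$. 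You instead argue purely topologically: continuity of $\i_{\Gamma}$ from the Vietoris subbase $\{W^+, W^-\}$ via the map $\psi(g,x)=(x,\theta(g,x))$, the tube lemma and openness of projections, and openness onto the image from the identity $\i_{\Gamma}([K,O])=\i_{\Gamma}(G)\cap W^+$ with $W=(X\times X)\setminus(K\times(X\setminus O))$, plus injectivity to pass from subbasic to arbitrary open sets. Your argument is more elementary in that it avoids uniformities entirely in (b); what the paper's uniform computation buys is the explicit description of the subspace uniformity on $\i_{\Gamma}(G)$ recorded in Remark~\ref{remark}(b) (the entourages $({\rm U}_X)$), which is then essential in the proof of Theorem~\ref{thm2} and in the fixed-point criterion (F), so the authors' longer route is not wasted work even though yours suffices for the proposition itself.
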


\begin{proof} 
(a) Let us verify that $\i_{\Gamma}$ is a $G$-map. 
$$\i_{\Gamma} (\alpha (h, g))=\i_{\Gamma} (hg)=\Gamma (hg)=\{(x, (hg)(x))\ |\ x\in X\}=\{(x, \theta (hg, x))\ |\ x\in X\}=$$
$$=\{(x, \theta (h, g(x))\ |\ x\in X\}\stackrel{(\theta^H_{\uparrow})}{=}\theta^H_{\uparrow} (h, \Gamma (g))=\theta^H_{\uparrow} (h, \i_{\Gamma} (g)),\ h, g\in G.$$

Let us verify that $\i_{\Gamma}$ commutes with self-inverse maps $*$ and $H_{s_X}$.
$$\i_{\Gamma} (g^{-1})=\Gamma (g^{-1})=\{(x, g^{-1}(x))\ |\ x\in X\}\stackrel{y=g^{-1}(x)}{=}\{(g(y), y)\ |\ y\in X\}=$$
$$=H_{s_X}(\{(y, g(y))\ |\ y\in X\})=H_{s_X}(\Gamma (g))=H_{s_X}(\i_{\Gamma} (g)),\ g\in G.$$

(b)  
$$\i_{\Gamma} (hg)=\Gamma (hg)=\{(x, (hg)(x))\ |\ x\in X\}=\{(x,  h(g(x)))\ |\ x\in X\}=$$
$$=\{(g(x), h(g(x)))\ |\ x\in X\}\{(x, g(x))\ |\ x\in X\}=\Gamma (h)\Gamma (g)=\i_{\Gamma} (h)\i_{\Gamma} (g),\ h, g\in G,$$
$\i_{\Gamma}(e)=\Delta_X$ is the identity in $2^{X\times X}$. Hence, $\i_{\Gamma}$ is an isomorphism of $G$ and $\i_{\Gamma}(G)$.

Let $\mathcal U$ be the unique uniformity on compact space $X$, the Cartesian product $\mathcal U^2$ of the uniformity $\mathcal U$ is the unique uniformity on compact space $X\times X$. The base of $\mathcal U^2$ is formed by uniform covers 
$$u^2=\{U\times U'\ |\ U,\ U'\in u\}, u\in\mathcal U.$$

The base of the unique uniformity $2^{\mathcal U^2}$ on compact space $(\CL (X\times X), \tau_V)$ (we consider $(\CL (X\times X), \tau_V)$ since $\{\emptyset\}$ is an isolated point in $2^{X\times X}$) is formed by the entourages 
$$\CL^{u^2}=\{(F, T)\in \CL (X\times X)\times \CL (X\times X)\ |\ F\subset \st (T, u^2),\  T\subset \st (F, u^2)\},\ u\in\mathcal U.$$

The compact-open topology on $G$ coincides with the topology induced by the uniformity of uniform  convergence~\cite[Ch.\ 8, \S\ 8.2, Corollary 8.2.7]{Engelking}, which base consists of the entourages of the diagonal 
$$\hat {\rm U}=\{(h, g)\in G\times G\ |\ \forall x\in X\ h(x)\in\st (g(x), u)\},\ \mbox {where}\ u\in\mathcal U.$$

If $(h, g)\in \hat {\rm U}$, then $(\i_{\Gamma} (h)=\Gamma (h), \i_{\Gamma}(g)=\Gamma (g))\in\CL^{u^2}$, $u\in\mathcal U$. Therefore, the map $\i_{\Gamma}$ is continuous.

For $w\in\mathcal U$ let $u\in\mathcal U$ be such that $\{\st (\st (x, u), u)\ |\ x\in X\}\succ w$. 

Any $h\in G$ is a uniformly continuous map of $X$. Hence, for $u$ $\exists\ v\in\mathcal U$, $v\succ u$, such that if $x'\in\st (x, v)$, then $h(x')\in\st (h (x), u)$, $x\in X$. If $(\Gamma (h), \Gamma (g))\in\CL^{v^2}$, then $\forall\ x\in X$ $\exists\ x'\in X$ such that $x'\in\st (x, v)$ and $h(x')\in\st (g(x), v)$. Since $h(x')\in (h(x), u)$, $g(x)\in\st (\st (h(x), u), u)$, $\forall\ x\in X$. Hence, $g(x)\in \st (h(x), w)$, $x\in X$. The restriction $\i_{\Gamma}^{-1}|_{\i_{\Gamma} (G)}$ is continuous at the point $\Gamma (h)$ by Lemma~\ref{cont} and $\i_{\Gamma}$ is a homeomorphic embedding. 

\medskip

(c) $\i_{\Gamma}$ is uniformly continuous with respect to the Roelcke uniformity $L\wedge R$ on $G$ and the unique uniformity on compact space $2^{X\times X}$ by Corollary~\ref{cor1}. 
\end{proof}

\begin{rem}\label{remark}
{\rm  (a) For a compact metrizable space items (a) and (b) of Proposition~\ref{homomF} are proved in~\cite{Kennedy}. In~\cite{usp2001} items (b) and (c) of Proposition~\ref{homomF} are proved.

\medskip

(b) The subspace uniformity on $\i_{\Gamma}(G)$ as a subset of $2^{X\times X}$ ($X$ is a compact space with uniformity $\mathcal U$) has the base consisting  of entourages of the diagonal (reformulation of definition in \S~\ref{hyper})
$$\begin{array}{c}
{\rm U}_{X}=\{(h, g)\in G\times G\ |\ \forall\ y\in X\ \exists\ x\in X\ \mbox{such that}\ y\in\st (x, u),\ h(y)\in\st (g(x), u) \\
\&\ \forall\ x\in X\ \exists\ y\in X\ \mbox{such that}\ x\in\st (y, u),\ g(x)\in\st (h(y), u)\},\ \mbox{where}\ u\in\mathcal U. \\ 
\end{array}\eqno{({\rm U}_X)}$$}
\end{rem}


\subsection{$G^*$-compactifications as subsets of $2^{b G\times bG}$}

Let $G$ be a topological group, $b G\in\mathbb{G} (G)$. By Proposition~\ref{topcoex}  $(G=(G, \tau_{co}), b G, \tilde\alpha)$ is a $G$-space. By Proposition~\ref{homomF} an embedding $\i^{b G}_{\Gamma}: G\to 2^{b G\times b G}$ is defined. Since $\i^{b G}_{\Gamma}  (G)$ is an invariant subset of $G$-space $(G, 2^{b G\times b G},  {\tilde\alpha}^H_{\uparrow})$ and invariant with respect to the self-inverse map $H_{s_{b G}}$ on $2^{b G\times b G}$ (and, hence, an invariant subset of a $G$-space $(G, 2^{b G\times b G}, {\tilde\alpha}^H_{\rightarrow})$), its closure $\cl (\i^{b G}_{\Gamma} (G))$ in $2^{b G\times b G}$ is an invariant subset for the action ${\tilde\alpha}^H_{\uparrow}$ and invariant with respect to the self-inverse map $H_{s_{b G}}$ (and, hence, an invariant subset for the action ${\tilde\alpha}^H_{\rightarrow}$). Therefore, $\cl (\i^{b G}_{\Gamma} (G))$ is a $G^*$-compactification of $G$. Put 
$$\mathfrak{G}: \mathbb{G} (G)\to \mathbb{G}^* (G),\ \mathfrak{G}(b G)=\cl (\i^{b G}_{\Gamma} (G)).$$

\begin{thm}\label{thm2} 
The map $\mathfrak{G}:\mathbb{G}(G)\to \mathbb{G}^* (G)$ has the following properties 
\begin{itemize}
\item $\mathfrak{G}$ is a morphism {\rm(}of posets{\rm)}, 
\item $\forall\ b G\in \mathbb{G}^{\leftrightarrow} (G)$ \quad $b G\leq\mathfrak{G}(b G)$, 
\item $\forall\ b G\in \mathbb{G} (G)$ \quad $\mathfrak{G}(b G)\leq b_r G$,\quad if $b_r G\leq b G$, then $\mathfrak{G}(b G)=b_r G$.
\end{itemize}
Let $b G\in\mathbb{G}^{\leftrightarrow}(G)$ {\rm(}$\mathcal U$ is the subspace uniformity on $G$ as subset of the compact space $b G${\rm)}. 
$\mathfrak{G}(b G)=b G$ iff $\forall\ u\in \mathcal U$ $\exists\ v\in\mathcal U$ such that if $g\in\st (h, v)$, then 
$$\forall\ y\in G\ \exists\ x\in G\ \mbox{such that}\ y\in\st (x, u),\ h(y)\in\st (g(x), u)$$
$$\&\ \forall\ x\in G\ \exists\ y\in G\ \mbox{such that}\ x\in\st (y, u),\ g(x)\in\st (h(y), u).\eqno{\rm(F)}$$
\end{thm}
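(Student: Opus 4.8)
The plan is to establish the four items separately. The first and the third are formal; the second and the fourth both reduce to comparing two totally bounded uniformities on $G$, once one records that $\mathfrak{G}(bG)=\cl(\i^{bG}_{\Gamma}(G))$ is the completion of $(G,\mathcal U_\Gamma)$, where $\mathcal U_\Gamma$ is the subspace uniformity that $2^{bG\times bG}$ induces on $\i^{bG}_{\Gamma}(G)\cong G$. By Remark~\ref{remark}(b) (with $X=bG$) $\mathcal U_\Gamma$ has the explicit base $(\mathrm U_X)$, it is compatible (Proposition~\ref{homomF}(b) and Proposition~\ref{topcoex}), and, being totally bounded, it satisfies $\mathfrak{G}(bG)=bG$ (as compactifications) if and only if $\mathcal U_\Gamma=\mathcal U$.

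For the morphism property, let $\varphi\colon b'G\to bG$ be the map of $G$-compactifications; it is a $G$-map, so $\Phi=\varphi\times\varphi$ induces by Proposition~\ref{mapshyper} an onto $G$-map $H_\Phi\colon 2^{b'G\times b'G}\to 2^{bG\times bG}$. Using that $\varphi$ is an onto $G$-map one checks $H_\Phi(\Gamma(g))=\Phi(\{(t,\tilde\alpha'(g,t)):t\in b'G\})=\{(\varphi(t),\tilde\alpha(g,\varphi(t))):t\in b'G\}=\Gamma(g)$, hence $H_\Phi\circ\i^{b'G}_{\Gamma}=\i^{bG}_{\Gamma}$; therefore $H_\Phi$ maps $\mathfrak{G}(b'G)$ onto $\mathfrak{G}(bG)$ and its restriction is a map of compactifications, so $\mathfrak{G}(b'G)\geq\mathfrak{G}(bG)$. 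For the third item: $\mathfrak{G}(bG)\in\mathbb{G}^{*}(G)\subseteq\mathbb{G}^{\leftrightarrow}(G)$ (Proposition~\ref{compinv}), so $\mathfrak{G}(bG)\leq b_rG$ by the maximality in Proposition~\ref{blrlck}; since also $b_rG\in\mathbb{G}^{\leftrightarrow}(G)$, the second item (below) gives $b_rG\leq\mathfrak{G}(b_rG)$, whence $\mathfrak{G}(b_rG)=b_rG$, and then if $b_rG\leq bG$ the morphism property yields $\mathfrak{G}(bG)\geq\mathfrak{G}(b_rG)=b_rG$, so $\mathfrak{G}(bG)=b_rG$.

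The heart of the theorem is the second item, $bG\leq\mathfrak{G}(bG)$, i.e.\ $\mathcal U\subseteq\mathcal U_\Gamma$. Here the hypothesis $bG\in\mathbb{G}^{\leftrightarrow}(G)$ is used essentially: by Proposition~\ref{blrlck} it forces $bG\leq b_rG$, hence $\mathcal U\subseteq L$, so every uniform cover of $bG$ is refined by a left-uniform cover $\lambda_O=\{xO:x\in G\}$. Given a uniform cover $\tilde u$ of $bG$, I would choose $O\in N_G(e)$ with $\lambda_O\succ\tilde u$ and a uniform cover $\tilde v$ of $bG$ with $\tilde v\wedge G\prec\lambda_O$, and then show $\mathrm U_{bG}(\tilde v)$ lies inside a $\mathcal U$-entourage: for $(h,g)\in\mathrm U_{bG}(\tilde v)$, applying the defining condition at $y=b(e)$ produces $x\in bG$ with $x\in\st(b(e),\tilde v)$ and $b(h)=\tilde\alpha(h,b(e))\in\st(\tilde\alpha(g,x),\tilde v)$; since $\st(b(e),\tilde v)\subseteq\cl(\st(e,\tilde v\wedge G))\subseteq\cl(O^{-1}O)$ one gets $\tilde\alpha(g,x)\in\cl(gO^{-1}O)\subseteq\cl(\st(b(g),\tilde u))$, and combining with the previous inclusion places $b(h)$ in a controlled star of $b(g)$. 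The main obstacle is precisely the step that keeps the translate $gO^{-1}O$ inside a $\tilde u$-small set around $b(g)$ uniformly in $g\in G$: for a general $G$-compactification this fails, and it is rescued exactly by $\mathcal U\subseteq L$, i.e.\ by $bG\in\mathbb{G}^{\leftrightarrow}(G)$. (The bookkeeping of star-refinements and of the passage from stars in $bG$ to stars in $G$ --- via density of $G$ and Remark~\ref{denseent} --- is routine.)

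Finally, the fixed-point criterion. By the second item $\mathcal U\subseteq\mathcal U_\Gamma$, so $\mathfrak{G}(bG)=bG$ iff $\mathcal U_\Gamma\subseteq\mathcal U$, that is, iff every entourage $\mathrm U_{bG}(\tilde v)$ contains a $\mathcal U$-entourage $\{(h,g)\in G\times G:g\in\st(h,u)\}$ with $u$ a uniform cover of $G$. Using density of $G$ in $bG$, continuity of the homeomorphisms $\tilde\alpha^{h}$ and $\tilde\alpha^{g}$, and Remark~\ref{denseent}, the base $\{\mathrm U_{bG}(\tilde v)\}$ (indexed by uniform covers of $bG$) is cofinally equivalent, as a filter base on $G\times G$, to the family of sets obtained by replacing $bG$ with $G$ everywhere --- i.e.\ to the sets whose membership condition is the bracketed part of (F), indexed by uniform covers $u$ of $G$. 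Hence $\mathcal U_\Gamma\subseteq\mathcal U$ unwinds to exactly condition (F). I would carry out this cofinal-equivalence step in detail, since reconciling the two quantifier patterns (over $bG$ versus over $G$) is the only delicate point here.
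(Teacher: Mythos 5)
Your proposal is correct and follows essentially the same route as the paper: the same four-part decomposition, the same use of $H_\Phi$ for the morphism property, the same evaluation of the graph-closeness condition at the identity combined with the Roelcke-type control coming from $bG\in\mathbb{G}^{\leftrightarrow}(G)$ (the paper extracts this directly from the boundedness of $\tilde\alpha$ and $\tilde\alpha^*$ as in Proposition~\ref{unifcontRoelcke}, you via $bG\le b_rG$ and $\mathcal U\subseteq L$ --- the same fact), and the same unwinding of the entourages $({\rm U}_X)$ against condition (F) via density and Remark~\ref{denseent}. The steps you defer as ``routine bookkeeping'' are exactly the ones the paper writes out, and they close as you indicate.
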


\begin{proof}
(i) Let $b G, b' G\in\mathbb{G} (G)$, $b G\leq b' G$, and $\varphi: b' G\to b G$ is the map of compactifications of $G$. Then $(\cl (\i^{b' G}_{\Gamma} (G)), \i^{b' G}_{\Gamma})$, $(\cl (\i^{b G}_{\Gamma} (G)), \i^{b G}_{\Gamma})$ are $G^*$-compactifications of $G$ and $H_{\phi}|_{\cl (\i^{b' G}_{\Gamma} (G))}: \cl (\i^{b' G}_{\Gamma} (G))\to \cl (\i^{b G}_{\Gamma} (G))$ is the map of compactifications. Indeed, 
$$H_{\phi} (\i^{b' G}_{\Gamma}(g))=H_{\phi} (\{(x, g(x))\ |\ x\in b'G\})=\{(\varphi (x), \varphi (\tilde\alpha_{b' G} (g, x)))\ |\ x\in b' G\}=$$
$$=\{(\varphi (x), \tilde\alpha_{b G} (g, \varphi (x)))\ |\ x\in b'G\}= \{(y, \tilde\alpha_{b G} (g, y))\ |\ y\in b G\}=\i^{b G}_{\Gamma}(g).$$
Hence, $H_{\phi}\circ\i^{b' G}_{\Gamma}=\i^{b G}_{\Gamma}$, $H_{\phi}|_{\cl (\i^{b' G}_{\Gamma}(G))}$ is the map of compactifications and $\mathfrak{G} (b' G)\geq \mathfrak{G} (b G)$.

\medskip 

(ii) Let $b G\in\mathbb{G}^{\leftrightarrow} (G)$ ($(G, b G, \tilde\alpha)$ is a $G$-space), $\mathcal U$ the subspace uniformity on $G$ as subset of the compact space $(b G, \tilde{\mathcal U})$. Since $ b G\in \mathbb{G}^{\leftrightarrow} (G)$ the inverse action $\tilde\alpha^*$ is defined. 

For $\tilde u\in\tilde{\mathcal U}$ take $\tilde v\in\tilde{\mathcal U}$ such that:
$$\{\st (\st (x, \tilde v), \tilde v)\ |\ x\in b G\}\succ\tilde u.$$
Take $\tilde w\in\tilde{\mathcal U}$  and $O=O^{-1}\in N_G (e)$ such that $O\tilde wO=\{\tilde\alpha (O, \tilde\alpha^* (O, \tilde W))\ |\ \tilde W\in\tilde w\}\succ\tilde v$ (it is possible since $\tilde{\mathcal U}$ is an equiuniformity for the action $\tilde\alpha$ and its inverse $\tilde\alpha^*$, using the proof in  Proposition~\ref{unifcontRoelcke}). Refining in $\tilde w$ a uniform cover if needed, we can assume that $\st (e, w)\subset O$, where $w=\tilde w\wedge G$. 

Let $\Gamma (f)\in\st(\Gamma (h), \tilde w\times\tilde w)\ \&\ \Gamma (h)\in\st(\Gamma (f), \tilde w\times\tilde w)$ and $(e, f(e))\in\st ((x, h(x)), \tilde w\times\tilde w)$. We can assume that $x\in G$ and $(e, f(e))\in\st ((x, h(x)), w\times w)$. Then $e\in\st (x, w)\Longleftrightarrow x\in\st (e, w)\subset O$, and $h(x)=hx\in hO$. Let $\tilde V\in\tilde v$ be such that $hO\subset\tilde V$. Then $f(e)=f\in\st (h(x), \tilde w)\subset\st (\tilde V, \tilde v)\subset\st (\st (h, \tilde v), \tilde v)\subset \tilde U\in\tilde u$ and $f\in \st (h, u)$. 

It is shown that $\mathcal U$ is weaker than the subspace uniformity on $G$ as a subset of compact space $2^{b G\times b G}$ and the inequality $b G\leq \mathfrak{G}(b G)$ is proved. 

\medskip

(iii) Since  $\forall\ b G\in \mathbb{G} (G)$  $\mathfrak{G}(bG)\in\mathbb{G}^*(G)$, from Proposition~\ref{blrlck} it follows that $\mathfrak{G}(b G)\leq b_r G$. 

If $b_r G\leq b G$, then by (i) $\mathfrak{G}(b_r G)\leq\mathfrak{G}(b G)\leq b_r G$. Since $b_r G\in\mathbb{G}^*(G)$ by (ii) $b_r G\leq\mathfrak{G}(b_r G)$. Hence, $\mathfrak{G}(b G)=b_r G$. 

\medskip

The proof of the last statement. Necessity. If $b G=\mathfrak{G}(b G)$, then the map $\i_{\Gamma}: (G, \mathcal U)\to 2^{b G\times b G}$ is uniformly continuous and condition $({\rm U}_X)$ of Remark~\ref{remark} implies fulfillment of the condition (F). 

Sufficiency. Since $b G\in\mathbb{G}^{\leftrightarrow}(G)$, by (ii) $b G\leq\mathfrak{G}(b G)$. The condition (F), condition $({\rm U}_X)$ of Remark~\ref{remark} and Remark~\ref{denseent} imply that the map $\i_{\Gamma}: (G, \mathcal U)\to 2^{b G\times b G}$ is uniformly continuous. Hence,  $b G\geq\mathfrak{G}(b G)$ and, finally, $b G=\mathfrak{G}(b G)$.
\end{proof}

\begin{rem}
{\rm If $b G\in\mathbb{G}^{\leftrightarrow}(G)$ and the condition of Theorem~\ref{thm2} is fulfilled, then $b G\in\mathbb{G}^*(G)$. Indeed, since $b G=\mathfrak{G}(b G)$, $b G\in\mathbb{G}^*(G)$.}
\end{rem}


From Theorems~\ref{thm1} and~\ref{thm2} it follows.

\begin{cor}\label{propBcomp}
\begin{itemize}
\item[(a)] Let $b G\in\mathbb{G}(G)$ be such that $b_r G\leq b G$. Then
$$b_r G=\mathfrak{G}(b G)\leq b G\leq\mathfrak{E}(b G),\ b_r G=\mathfrak{G}(b G)\leq\mathfrak{E}(b_r G)\leq\mathfrak{E}(b G).$$

\item[(b)]  $b_r G=\mathfrak{G}(\beta_G G)\leq\beta_G G=\mathfrak{E}(\beta_G G)$. 

If $G$ is a {\rm SIN}-group, then $b_r G=\mathfrak{G}(\beta_G G)=\beta_G G=\mathfrak{E}(\beta_G G)$. 

\item[(c)] If $b_r G\in\mathbb E (G)$, then $\mathfrak{G}(b_r G)=\mathfrak{E}(b_r G)=b_r G$.

\item[(d)] If $b G\in\mathbb{G}^*(G)\cap\mathbb{E}(G)$, then $b G=\mathfrak{E}(b G)\leq\mathfrak{G}(b G)\leq b_r G$.
\end{itemize}
\end{cor}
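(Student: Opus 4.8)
The plan is to derive all four statements mechanically from the three bullet points of Theorem~\ref{thm2} (properties of $\mathfrak{G}$), the three bullet points of Theorem~\ref{thm1} (properties of $\mathfrak{E}$), the characterization of $\beta_G G$ and $b_r G$ as the maximal elements of $\mathbb{E}(G)$ and $\mathbb{G}^{\leftrightarrow}(G)$ respectively (Proposition~\ref{blrlck} and Remark after Proposition~\ref{order*comp}), and the order-preservation of both maps. The only ``content'' is bookkeeping about the partial order, so I would keep the write-up short and cite the two theorems liberally.

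For (a): assume $b_rG\le bG$. Since $bG\in\mathbb{G}(G)$, the third bullet of Theorem~\ref{thm2} gives $\mathfrak{G}(bG)=b_rG$, and the second bullet of Theorem~\ref{thm1} gives $bG\le\mathfrak{E}(bG)$; this yields the first chain $b_rG=\mathfrak{G}(bG)\le bG\le\mathfrak{E}(bG)$. For the second chain, apply the monotonicity of $\mathfrak{E}$ (first bullet of Theorem~\ref{thm1}) to $b_rG\le bG$ to get $\mathfrak{E}(b_rG)\le\mathfrak{E}(bG)$, and combine with $b_rG\le\mathfrak{E}(b_rG)$ (again the second bullet of Theorem~\ref{thm1}) and $\mathfrak{G}(bG)=b_rG$.

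For (b): $\beta_GG\in\mathbb{G}(G)$ and $b_rG\le\beta_GG$ always holds (the greatest ambit dominates the Roelcke compactification, since $R\ge L\wedge R$ forces the Samuel compactification of $(G,R)$ to dominate that of $(G,L\wedge R)$), so (a) applies with $bG=\beta_GG$; moreover $\beta_GG\in\mathbb{E}(G)$ so $\mathfrak{E}(\beta_GG)=\beta_GG$ by Corollary~\ref{idemcoin}. This gives $b_rG=\mathfrak{G}(\beta_GG)\le\beta_GG=\mathfrak{E}(\beta_GG)$. If $G$ is SIN, then $\beta_GG=b_rG$ (stated in Remark~(a) after Proposition~\ref{order*comp}), and all the terms collapse. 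For (c): if $b_rG\in\mathbb{E}(G)$, then $\mathfrak{E}(b_rG)=b_rG$ by Corollary~\ref{idemcoin}; and since $b_rG\in\mathbb{G}^*(G)$ (Proposition~\ref{blrlck}), the second bullet of Theorem~\ref{thm2} gives $b_rG\le\mathfrak{G}(b_rG)$, while the third bullet gives $\mathfrak{G}(b_rG)\le b_rG$, so $\mathfrak{G}(b_rG)=b_rG$. For (d): if $bG\in\mathbb{G}^*(G)\cap\mathbb{E}(G)$, then $\mathfrak{E}(bG)=bG$ by Corollary~\ref{idemcoin}; since $bG\in\mathbb{G}^*(G)\subset\mathbb{G}^{\leftrightarrow}(G)$, the second bullet of Theorem~\ref{thm2} gives $bG\le\mathfrak{G}(bG)$, and the third bullet (or Proposition~\ref{blrlck}, as $\mathfrak{G}(bG)\in\mathbb{G}^*(G)$) gives $\mathfrak{G}(bG)\le b_rG$.

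There is essentially no obstacle here; the only point requiring a word of justification is the inequality $b_rG\le\beta_GG$ used in (b), which is not literally displayed earlier but is immediate from $R\ge L\wedge R$ and the functoriality of the Samuel compactification (equivalently: $\beta_GG$, being the maximal $G$-compactification, dominates the $G$-compactification $b_rG$ by Proposition~\ref{blrlck}). I would state this in one sentence and then let the order-chasing run its course.
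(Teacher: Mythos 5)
Your proposal is correct and follows exactly the route the paper intends: the paper gives no written proof beyond the line ``From Theorems~\ref{thm1} and~\ref{thm2} it follows,'' and your order-chasing from the bullet points of those two theorems, together with Corollary~\ref{idemcoin}, Proposition~\ref{blrlck}, and the maximality of $\beta_G G$ in $\mathbb{G}(G)$ (which settles $b_r G\leq\beta_G G$), is precisely the intended derivation. No gaps.
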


\begin{que}
What are further properties of the map $\mathfrak{G}$?
\end{que}

\begin{ex}
{\rm (1) If $G$ is a locally compact group, then for the Alexandroff one-point compactification $\alpha G$ the following hold 
$$\alpha G=\mathfrak{G}(\alpha G)=\mathfrak{E}(\alpha G)$$
and $\alpha G$ is a sim-compactification of $G$.

Indeed, since $\alpha G$ is a compact semitopological semigroup with zero element (ideal) $\infty$, it is a sim-compactification of $G$ ($\alpha G$ is a semitopological inverse monoid with continuous inversion which extends involution on $G$, $(\infty)^*=\infty$, see~\cite{Sain}). By Corollary~\ref{idemcoin} $\alpha G=\mathfrak{E}(\alpha G)$. 

If $R\in\cl (\i^{\alpha G}_{\Gamma} (G))=\mathfrak{G}(\alpha G)$, then $R\cap(\{x\}\times\alpha G)\ne\emptyset$, $x\in\alpha G=G\cup\{\infty\}$. Indeed, otherwize $R\in (W=(\alpha G\times\alpha G)\setminus(\{x\}\times\alpha G))^+$, $W^+\cap\i^{\alpha G}_{\Gamma} (G)=\emptyset$ and, hence, $R\not\in\cl (\i^{\alpha G}_{\Gamma} (G))$. Anologuously, if $R\in\cl (\i^{\alpha G}_{\Gamma} (G))=\mathfrak{G}(\alpha G)$, then $R\cap(\alpha G\times \{x\})\ne\emptyset$, $x\in\alpha G=G\cup\{\infty\}$. 

Let $R\in\cl (\i^{\alpha G}_{\Gamma} (G))=\mathfrak{G}(\alpha G)$, $(x, z)\in R\cap (G\times G)$. Take $g$ such that $z=gx$. We shall show that $R=\i^{\alpha G}_{\Gamma} (g)$. 

Assume that $\exists\ y\in G$ and $p\in\alpha G$ such that $(y, p)\in R$, $p\ne gy$. Since the action $\alpha$ is uniformly equicontinuous with respect to the left uniformity $L$ on $G$ and $R\cap (G\times G)$ is a closed subset of $G\times G$ $\exists$ nbds $O, U\in N_G(e)$ and a nbd $V_p$ of $p$ such that if $h\in G$ and $h(Oy)\cap Ugy\ne\emptyset$, then $h(Oy)\cap V_p=\emptyset$. Put $k=x^{-1}y$. 

Take nbd $W=Ox\times Uz$ of $(x, z)$. $R$, $\imath^{\alpha G}_{\Gamma} (g)\in W^-$. If $h\in G$ is such that $hOx\cap Ugx\ne\emptyset$, then 
$$\exists\ t\in O,\ \exists\ q\in U,\ \mbox{such that}\ htx=qgx\ \Longrightarrow\ htxk=qgxk,$$
$$htxk=hty\in hOy,\ qgxk=qgy\in Ugy,\ \Longrightarrow\ hOy\cap Ugy\ne\emptyset.$$
Therefore, $R\in W^-\cap (Oy\times V_p)^-$, however, $\i^{\alpha G}_{\Gamma} (G)\cap (W^-\cap (Oy\times V_p)^-)=\emptyset$. Hence, $R\not\in\cl (\i^{\alpha G}_{\Gamma} (G))$. The obtained contradiction shows that $R=\i^{\alpha G}_{\Gamma} (g)$. 

From the above it follows that 
$$\mathfrak{G}(\alpha G)=\cl (\i^{\alpha G}_{\Gamma} (G))=\i^{\alpha G}_{\Gamma} (G)\cup\{(x, \infty),\ (\infty, x)\ |\ x\in\alpha G\}=\alpha G.$$ 

(2) Let $b\mathbb Z=\{-\infty\}\cup\mathbb Z\cup\{+\infty\}$ be as in Example~\ref{example}. Then $\mathfrak{E}(b\mathbb Z)=\mathfrak{G}(b\mathbb Z)=b\mathbb Z$. 

Indeed, since $b\mathbb Z\in\mathbb E(\mathbb Z)$,  by Corollary~\ref{idemcoin} $\mathfrak{E}(b\mathbb Z)=b\mathbb Z$. Since $b\mathbb Z\in\mathbb G^*(\mathbb Z)$, it is sufficient to show that the condition (F) of Theorem~\ref{thm2} is valid. 

Let $\mathcal U$ be the subspace uniformity on $\mathbb Z$ as a subset of $b\mathbb Z$. Without loss of generality, take 
$$u=\{(-\infty, -n], \{-n+1\}, \ldots, \{n-1\}, [n, +\infty)\}\in\mathcal U,\ n\in\mathbb N,$$
and $v=\{(-\infty, -2n], \{-2n+1\}, \ldots, \{2n-1\}, [2n, +\infty)\}\in\mathcal U$. If $g\in\st (h, v)$, then either 
\begin{itemize}
\item[(a)] $g, h\leq -2n$, or 
\item[(b)] $g, h=k$, $k\in [-2n+1, 2n-1]$, or 
\item[(c)] $g, h\geq 2n$. 
\end{itemize}
In (a) $g(n), h(n)\leq -n$. Hence, for $k\leq n$\ $g(k)\in\st (h(k), u)$. For $k>n$\ $g(k)=h(k+(g-h))$ and $k+(g-h)\in\st (k, u)$, if $g\leq h$ (the case $g\geq h$ is treated similarly). In (b) $g=h$ and (c) is treated as (a). Therefore, the condition (F) of Theorem~\ref{thm2} is valid.}
\end{ex}



\begin{thebibliography}{99}

\bibitem{Arens}  R.~Arens, {\it Topologies for homeomorphism groups}, Amer. J. of Math., {\bf 68} (4) (1946), 593--610. 

\bibitem{ArhTk} A.\,V.~Arhangel'skii, M.\,G.~Tkachenko, Topological
groups and related structures, Paris (Atlantis Press), 2008.

\bibitem{Beer}  G.~Beer, {\it Topologies on Closed and Closed Convex Sets}, MAIA 268, Kluwer Acad. Publ., 1993.

\bibitem{BITsankov} I.~Ben Yaacov, T.~Ibarluc\'{\i}a, T.~Tsankov, {\it Eberlein oligomorphic groups}, Trans. Amer. Math. Soc., {\bf  370} (3) (2018), 2181--2209.

\bibitem{BergJungMiln76} J.\,F.~Berglund, H.\,D.~Junghenn, P.~Milnes, Analysis on Semigroups. Function Spaces, Compactifications, Representations, A Wiley-Interscience Publication, New York, 1976.

\bibitem{Berg} J.\,F.~Berglund, N.~Hindman, {\it Filters and the weak almost periodic compactifications of a discrete semigroup}, Trans. of the
Amer. Math. Soc., {\bf 284} (1) (1984), 1--38.

\bibitem{Birkhoff} G.~Birkhoff, {\it Lattice Theory}, Amer. Math. Soc., Providence, RI, 1967.

\bibitem{CliffPrest} A.\,H.~Clifford, G.\,B.~Preston, {\it The algebraic theory of semigroups 1 and 2}, Amer. Math. Soc, Providence, 1961 and 1967.

\bibitem{DJPLP} V.~Donju\' an, N.~Jonard-P\' erez, A.~L\' opez-Poo, {\it Some notes on induced functions and group actions on hyperspaces}, Top. Appl., {\bf 311} (2022), 107954.

\bibitem{Ellis} R.~Ellis, {\it A semigroup associated with a transformation group}, Trans. Amer. Math. Soc., {\bf 94} (1960) 272--281.

\bibitem{Engelking} R.~Engelking,  {\it General topology}, Second edition. Sigma Series in Pure Mathematics, 6. Hendermann Verlag. Berlin. 1989.

\bibitem{HS} N.~Hindman, D.~Strauss, Algebra in the Stone-\v Cech compactification, Theory and Applications, $2^{nd}$ revised and extended edition,  De Gruyter, 2012.

\bibitem{Isbell} J.\,R.~Isbell, Uniform spaces, Mathematical Surveys, N 12, American Mathematical Society, Providence, R.I. 1964.

\bibitem{Kennedy} J.~Kennedy, {\it Compactifing the space of homeomorphisms}, Colloq. Math., {\bf 56} (1988), 41--58.  

\bibitem{ChK2}  K.\,L.~Kozlov, V.\,A.~Chatyrko, {\it On $G$-compactifications}, Mat. Zametki {\bf 78} (5) (2005) 695--709 (in Russian), Math. Notes, {\bf 78}  (5-6) (2005), 649--661.

\bibitem{Kozlov2022} K.\,L.~Kozlov, {\it Uniform equicontinuity and groups of homeomorphisms}, Topol. Appl.  {\bf 311} (2022), 107959.

\bibitem{KozlovSorin} K.\,L.~Kozlov, B.\,V.~Sorin, {\it Enveloping Ellis semigroups as compactifications of transformations groups} (arXiv:2412.04281) (2024).

\bibitem{JLawson} J.\,D.~Lawson, {\it Joint continuity in semitopological semigroups}, Illinois J. Math., {\bf 18}  (1974), 275--285.

\bibitem{Lawson} M.\. V.~Lawson, {\it Introduction to inverse semigroups} (arXiv:2304.13580v2) (2023).

\bibitem{Megr1984} M.~Megrelishvili, Equivariant completions and compact extensions (in Russian), Soobshch. Akad. Nauk Gruzin. SSR, 115 (1)  (1984) 21--24.

\bibitem{Megr0} M.~Megrelishvili, {\it Equivariant completions}, Comment. Math. Univ. Carolin., {\bf 35} (3) (1994), 539--547.

\bibitem{Megr2001} M.~Megrelishvili, {\it Every semitopological semigroup compactification of the group $H_+[0, 1]$ is trivial}, Semigroup Forum,  {\bf 63} (3) (2001), 357--370.

\bibitem{RD} W.~Roelcke, S.~Dierolf, {\it Uniform structures on topological groups and their quotients}, McGraw-Hill Inc. 1981.

\bibitem{Sain} B.\,M.~\v Sa\v\i n, {\it On the theory of generalized groups}, Dokl. Akad. Nauk SSSR 153 (1963), 296--299 (in Russian).

\bibitem{usp2001} V.\,V.~Uspenskij, {\it The Roelcke compactification of groups of homeomorphisms}, Top. Appl., {\bf 111} (2001), 195--205.

\bibitem{Vries}  J.~de Vries, {\it Abstract Topological Dynamics, in Recent Progress in General Topology}, Ed.: M.~Husek, J. van Mill,  Elsevier (1991), 597--628.

\end{thebibliography}
\end{document}